\tikzset{
	->, 
	>=stealth', 
	initial text=$ $, 
}
\newtheorem{definition}{Definition}
\newtheorem{remark}{Remark}
\newtheorem{problem}{Problem}
\newtheorem{lemma}{Lemma}
\newtheorem{assumption}{Assumption}
\newtheorem{proposition}{Proposition}
\def \agent {\mathcal{A}}
\def \svisor {\mathcal{S}}
\def \states {S}
\def \actions {A}
\def \probs {P}
\def \mdp {\mathcal{M}}
\def \initialstate {s_{0}}
\def \atomicprops {AP}
\def \policy {\pi}
\def \Policy {\Pi}
\def \successor {Succ}
\def \specification {\phi}
\def \agentspec {\lozenge R^{\agent}}
\def \agentthr {\nu^{\agent}}
\def \svisorspeci {\lozenge R^{\svisor}_{i}}
\def \svisorthri {\nu^{\svisor}_{i}}
\def \svisornumofspec {K^{\svisor}}
\def \history {h}
\def \histories {\mathcal{H}}
\def \closedset {C^{cl}}
\def \agentsset {R^{\agent}}
\def \differset {\states_{d}}
\def\BibTeX{{\rm B\kern-.05em{\sc i\kern-.025em b}\kern-.08em
    T\kern-.1667em\lower.7ex\hbox{E}\kern-.125emX}}
\begin{document}
\title{Deception in Supervisory Control}
\author{\thanks{This work was supported in part by AFRL FA9550-19-1-0169, AFOSR FA9550-19-1-0005, and DARPA D19AP00004.}Mustafa O. Karabag, Melkior Ornik,
	and Ufuk Topcu
	\thanks{M. O. Karabag is with the Department of Electrical and Computer Engineering, The University of Texas at Austin, Austin, TX 78705 USA (e-mail:karabag@utexas.edu). }
	\thanks{M. Ornik was with The University of Texas at Austin, Austin, TX 78705 USA. He is now with the Department of Aerospace Engineering and the Coordinated Science Laboratory, The University of Illinois at Urbana-Champaign, Urbana, IL 61801, USA (e-mail:mornik@illinois.edu).}
	\thanks{U. Topcu is with the Department of Aerospace Engineering and Engineering Mechanics and the Institute for Computational Engineering and Sciences, The University of Texas at Austin, Austin, TX 78705 USA (e-mail:utopcu@utexas.edu).}}

\maketitle

\begin{textblock*}{20.5cm}(0.5cm,0.1cm) 
   \footnotesize \copyright 2021 IEEE.  Personal use of this material is permitted.  Permission from IEEE must be obtained for all other uses, in any current or future media, including eprinting/republishing this material for advertising or promotional purposes, creating new collective works, for resale or redistribution to servers or lists, or reuse of any copyrighted component of this work in other works.
\end{textblock*}

\begin{abstract}
The use of deceptive strategies is important for an agent that attempts not to reveal his intentions in an adversarial environment. We consider a setting in which a supervisor provides a reference policy and expects an agent to follow the reference policy and perform a task. The agent may instead follow a different, deceptive policy to achieve a different task. We model the environment and the behavior of the agent with a Markov decision process, represent the tasks of the agent and the supervisor with reachability specifications, and study the synthesis of optimal deceptive policies for such agents. We also study the synthesis of optimal reference policies that prevent deceptive strategies of the agent and achieve the supervisor's task with high probability. We show that the synthesis of optimal deceptive policies has a convex optimization problem formulation, while the synthesis of optimal reference policies requires solving a nonconvex optimization problem. We also show that the synthesis of optimal reference policies is NP-hard.
\end{abstract}

\begin{IEEEkeywords}
Markov decision processes, deception, supervisory control, computational complexity.
\end{IEEEkeywords}

\section{Introduction}

\IEEEPARstart{D}{eception} is present in many fields that involve two parties, at least one of which is performing a task that is undesirable to the other party. The examples include cyber systems~\cite{carroll2011game , almeshekah2016cyber}, autonomous vehicles~\cite{mceneaney2005deception}, warfare strategy~\cite{lloyd2003art}, and robotics~\cite{shim2013taxonomy}. We consider a setting with a supervisor and an agent where the supervisor provides a reference policy to the agent and expects the agent to achieve a task by following the reference policy. However, the agent aims to achieve another task that is potentially malicious towards the supervisor and follows a different, deceptive policy. We study the synthesis of deceptive policies for such agents and the synthesis of reference policies for such supervisors that try to prevent deception besides achieving a task. 

In the described supervisory control setting, the agent's deceptive policy is misleading in the sense that the agent follows his own policy, but convinces the supervisor that he follows the reference policy. Misleading acts result in plausibly deniable outcomes~\cite{doodylying}. Hence, the agent's misleading behavior should have plausible outcomes for the supervisor. In detail, the supervisor has an expectation of the probabilities of the possible events. The agent should manipulate these probabilities such that he achieves his task while closely adhering to the supervisor's expectations. 

We measure the closeness between the reference policy and the agent's policy by Kullback--Leibler (KL) divergence. KL divergence, also called relative entropy, is a measure of dissimilarity between two probability distributions~\cite{cover2012elements}. KL divergence quantifies the extra information needed to encode a posterior distribution using the information of a given prior distribution. We remark that this interpretation matches the definition of plausibility: The posterior distribution is plausible if the KL divergence between the distributions is low.  

We use a Markov decision process (MDP) to represent the stochastic environment and reachability specifications to represent the supervisor's and the agent's tasks. We formulate the synthesis of optimal deceptive policies as an optimization problem that minimizes the KL divergence between the distributions of paths under the agent's policy and reference policy subject to the agent's task specification. In order to preempt the agent's deceptive policies, the supervisor may aim to design its reference policy such that any deviations from the reference policy that achieves some malicious task do not have a plausible explanation. We formulate the synthesis of optimal reference policies as a maximin optimization problem where the supervisor's optimal policy is the one that maximizes the KL divergence between itself and the agent's deceptive policy subject to the supervisor's task constraints. 

The agent's problem, the synthesis of optimal deceptive policies, and the supervisor's problem, the synthesis of optimal reference policies, lead to the following questions: Is it computationally tractable to synthesize an optimal deceptive policy? Is it computationally tractable to synthesize an optimal reference policy? We show that given the supervisor's policy, the agent's problem reduces to a convex optimization problem, which can be solved efficiently. On the other hand, the supervisor's problem results in a nonconvex optimization problem even when the agent uses a predetermined policy. We show that the supervisor's problem is NP-hard. We propose the duality approach and alternating direction method of multipliers (ADMM)~\cite{boyd2011distributed} to locally solve the supervisor's optimization problem. We also give a relaxation of the problem that is a linear program.

The setting described in this paper can be considered as a probabilistic discrete event system under probabilistic supervisory control~\cite{pantelic2009probabilistic,lawford1995equivalence}. The probabilistic supervisor induces an explicit probability distribution over the language generated by the system by random disablement of the events. The supervisory control model considered in this paper is similar in that the reference policy induces an explicit probability distribution over the paths of the MDP. Different from \cite{pantelic2009probabilistic,lawford1995equivalence},  we consider that the random disablement is done by the agent, and the supervisor is only responsible for providing the explicit random disablement strategy.

Similar to our approach,~\cite{bakshi2018plausible} used KL divergence as a proxy for the plausibility of messages in broadcast channels. While we use the KL divergence for the same purpose, the context of this paper differs from~\cite{bakshi2018plausible}. In the context of transition systems,~\cite{boularias2011relative, levine2014learning} used the metric proposed in this paper, the KL divergence between the distributions of paths under the agent's policy and the reference policy, for inverse reinforcement learning. In addition to the contextual difference, the proposed method of this paper differs from~\cite{boularias2011relative, levine2014learning}. We work in a setting with known transition dynamics and provide a convex optimization problem to synthesize the optimal policy while~\cite{boularias2011relative, levine2014learning} work with unknown dynamics and use sampling-based gradient descent to synthesize the optimal policy. Entropy maximization for MDPs~\cite{savas2018entropy} is a special case of the deception problem where the reference policy follows every possible path with equal probability. One can synthesize optimal deceptive policies by maximizing the entropy of the agent's path distribution minus the cross-entropy of the supervisor's path distribution relative to the agent's. For the synthesis of optimal deceptive policies, we use a method similar to~\cite{savas2018entropy} as we represent the objective function over transition probabilities. However, our proofs for the existence and synthesis of the optimal deceptive policies significantly differ from the results of \cite{savas2018entropy}. In particular, \cite{savas2018entropy} restricts attention to stationary policies without optimality guarantees whereas we prove the optimality of stationary policies for the deception problem. A related concept to deception is probabilistic system opacity, which was introduced in \cite{keroglou2018probabilistic}. Two hidden Markov models (HMMs) are pairwise probabilistically opaque if the likelihood ratio between the HMMs is a non-zero finite number for every infinite observation sequence. This paper is related to \cite{keroglou2018probabilistic} in that two HMMs are not pairwise probabilistically opaque if the KL divergence between the distribution of observation sequences is infinite. While \cite{keroglou2018probabilistic} provides a method to check whether two HMMs with irreducible Markov chains are pairwise probabilistically opaque, we propose an optimization problem for MDPs that quantifies the deceptiveness of the induced system by the agent. Deception is also interpreted as the exploitation of an adversary's inaccurate beliefs on the agent's behavior~\cite{ornik2018deception,karabag2019least}. The work \cite{ornik2018deception} focuses on generating unexpected behavior conflicting with the beliefs of the adversary, and \cite{karabag2019least} focuses on generating noninferable behavior leading to inaccurate belief distributions. On the other hand, the deceptive policy that we present generates behavior that is closest to the beliefs of the other party in order to hide the agent's malicious intentions. 

 We explore the synthesis of optimal reference policies, which, to the best of our knowledge, has not been discussed before. We propose to use ADMM to synthesize the optimal reference policies. Similarly,~\cite{fu2015optimal} also used ADMM for the synthesis of optimal policies for MDPs. While we use the same method, the objective functions of these papers differ since~\cite{fu2015optimal} is concerned with the average reward case, whereas we use ADMM to optimize the KL divergence between the distributions of paths. In addition, we use the ADMM to solve a decomposable minimax problem, which, to the best of our knowledge, has not been explored before.

We remark that a preliminary conference version \cite{karabag2019optimal} of this paper focuses on the synthesis of deceptive policies. In addition to contents of \cite{karabag2019optimal}, this version contains the NP-hardness result on the synthesis of optimal reference policies, duality and ADMM methods for the synthesis of locally optimal reference policies, and an additional numerical example~(Sections \ref{subsection:hardness}, \ref{subsection:dual}, \ref{subsection:ADMM}, and \ref{subsection:admmexample}). We also provide proofs of our results, which were omitted from \cite{karabag2019optimal}.
 
The rest of the paper is organized as follows. Section \ref{section:preliminaries} provides necessary theoretical background. In Section \ref{section:problemstatement}, the agent's and the supervisor's problems are presented. Section \ref{section:deceptive} explains the synthesis of optimal deceptive policies. In Section \ref{section:reference}, we give the NP-hardness result on the synthesis of optimal reference policies. We derive the optimization problem to synthesize the optimal reference policy and give the ADMM algorithm to solve the optimization problem. In this section, we also give a relaxed problem that relies on a linear program for the synthesis of optimal reference policies. We present numerical examples in Section \ref{section:examples} and conclude with suggestions for future work in Section \ref{section:conclusion}. We discuss the optimal deceptive policies under nondeterministic reference policies in Appendix \ref{section:nondeterministicref}. We provide the proofs for the technical results in Appendix \ref{section:proofs}.

	\section{Preliminaries} \label{section:preliminaries}
The set $\lbrace{ x=(x_{1}, \ldots, x_{n}) | x_i \geq 0 \rbrace}$ is denoted by $\mathbb{R}_{+}^{n}$. The set $\lbrace 1, \ldots, n \rbrace$ is denoted by $[n].$ The indicator function $\mathds{1}_{y}(x)$ of an element $y$ is defined as $\mathds{1}_{y}(x) =1$ if $x=y$ and $0$ otherwise. The characteristic function $\mathcal{I}_{C}(x)$ of a set $C$ is defined as $\mathcal{I}_{C}(x) = 0$ if $x \in C$ and $\infty$ otherwise. The projection $Proj_{C}(x)$ of a variable $x$  to a set $C$ is equal to $\arg\min_{y \in C} \| x-y \|^{2}_{2}$. A Bernoulli random variable with parameter $p$ is denoted by $Ber(p)$.

The set $\mathcal{K}$ is a convex cone, if for all $x,y \in \mathcal{K}$ and $a,b \geq 0$, we have $ax +by \in \mathcal{K}$. For the convex cone $\mathcal{K}$, $\mathcal{K}^{*} = \lbrace y | y^{T}x \geq 0, \forall x \in \mathcal{K} \rbrace$ denotes the dual cone. The exponential cone is denoted by $\mathcal{K}_{\exp} = \lbrace (x_1,x_2,x_3) | x_2 \exp(x_1 / x_2) \leq x_3, \ x_2 > 0 \rbrace$ $\cup$ $ \lbrace (x_1,0,x_3) | x_1 \leq 0, \ x_3 \geq 0 \rbrace$ and it can be shown that $\mathcal{K}^*_{\exp} = \lbrace (x_1,x_2,x_3) | -x_1 \exp(x_2 / x_1 - 1) \leq x_3, \ x_1 < 0 \rbrace$ $\cup$ $ \lbrace (0,x_2,x_3) | x_2 \geq 0, \ x_3 \geq 0 \rbrace$. 

\begin{definition}
	Let $Q_1$ and $Q_2$ be discrete probability distributions with a countable support $\mathcal{X}$. The \textit{Kullback--Leibler divergence} between $Q_1$ and $Q_2$ is $$KL(Q_1 || Q_2) = \sum_{x \in \mathcal{X}} Q_1(x) \log \left( \frac{Q_1(x)}{Q_2(x)}\right).$$ 
\end{definition}
We define $Q_1(x) \log \left( \frac{Q_1(x)}{Q_2(x)} \right)$ to be $0$ if $Q_1(x)=0$, and $\infty$ if $Q_1(x)> 0$ and $Q_2(x)=0$. 
Data processing inequality states that any transformation $T:\mathcal{X} \to \mathcal{Y}$ satisfies \begin{equation} \label{ineq:dataprocessing}
	KL(Q_{1} || Q_{2}) \geq KL(T(Q_{1}) || T(Q_{2})).
\end{equation}

\begin{remark}
	KL divergence is frequently defined with logarithm to base 2 in information theory. However, we use natural logarithm for the clarity of representation in the optimization problems. The base change does not change the results.
\end{remark}

	\subsection{Markov Decision Processes}
		A \textit{Markov decision process} (MDP) is a tuple $\mdp = (\states, \actions, \probs, \initialstate)$ where $\states$ is a finite set of states, $\actions$ is a finite set of actions, $\probs: \states \times \actions \times \states \to [0,1]$ is the transition probability function, and  $\initialstate$ is the initial state. $\actions(s)$ denotes the set of available actions at state $s$ where $\sum_{q \in \states} \probs(s,a,q) = 1$ for all $a \in \actions(s)$. The successor states of state $s$ is denoted by $Succ(s)$ where a state $q$ is in $Succ(s)$ if and only if there exists an action $a$ such that $P(s,a,q)> 0$. State $s$ is \textit{absorbing} if $\probs(s,a,s) = 1$ for all $a \in \actions(s)$.
	
		 The \textit{history} $\history_{t}$ at time $t$ is a sequence of states and actions such that $\history_{t} = s_{0} a_{0} s_{1} \ldots s_{t-1} a_{t-1} s_{t}$. The set of all histories at time $t$ is $\histories_{t}$. A \textit{policy} for $\mdp$ is a sequence $\policy = \mu_0\mu_1\ldots$ where each $\mu_t: \histories_{t} \times \actions \to [0,1]$ is a function such that $\sum_{a \in \actions(s_{t})} \mu_t(\history_{t},a) = 1$ for all $\history_{t} \in \histories_{t}$.   A \textit{stationary policy} is a sequence $\policy = \mu \mu \ldots$ where $\mu:\states \times \actions \to [0,1]$ is a function such that $\sum_{a \in \actions(s)} \mu(s,a) = 1$ for every $s \in \states$. The set of all policies for $\mdp$ is denoted by $\Pi(\mdp)$ and the set of all stationary policies for $\mdp$ is denoted by $\Pi^{St}(\mdp)$. For notational simplicity, we use $\probs_{s,a,q}$ for $\probs(s,a,q)$ and $\policy_{s,a}$ for $\mu(s,a)$ if $\pi=\mu\mu\ldots$, i.e., $\pi$ is stationary.

	A stationary policy $\policy$ for $\mdp$ induces a Markov chain $\mdp^{\policy} = (\states, \probs^{\policy})$ where $\states$ is the finite set of states and $\probs^{\policy}:\states \times \states \to [0,1]$ is the transition probability function such that $\probs^{\policy}(s,q) = \sum_{a \in A(s)} \probs(s,a,q) \policy(s,a)$ for all $s,q \in \states$. A state $q$ is \textit{accessible} from a state $s$ if there exists an $n \geq 0$ such that the probability of reaching $q$ from $s$ in $n$ steps is greater than $0$ . A set $C$ of states is a \textit{communicating class} if $q$ is accessible from $s$, and $s$ is accessible from $q$ for all $s,q \in C$. A communicating class $C$ is \textit{closed} if $q$ is not accessible from $s$ for all $s\in C$ and $q \in \states \setminus C$. 

		A \textit{path} $\xi = s_0 s_1 s_2 \ldots$ for an MDP $\mdp$ is an infinite sequence of states under policy $\policy= \mu_0 \mu_1 \ldots$ such that $\sum_{a \in \actions(s_t)} \probs(s_t,a,s_{t+1}) \mu_t(h_t,a) > 0$ for all $t\geq0$.  The distribution of paths for $\mdp$ under policy $\policy$ is denoted by $\Gamma^{\policy}_{\mdp}$.

	For an MDP $\mdp$ and a policy $\policy$, the \textit{state-action occupation measure} at state $s$ and action $a$ is defined by $x^{\policy}_{s,a} := \sum_{t=0}^{\infty} \Pr (s_t = s | s_0) \mu_t(s_t,a).$ If $\policy$ is stationary, the state-action occupation measures satisfy $x^{\policy}_{s,a} = \pi_{s,a} \sum_{a' \in A(s)} x^{\policy}_{s,a'}$ for all $s$ with finite occupation measures. The state-action occupation measure of a state-action pair is the expected number of times that the action is taken at the state over a path. We use $x^{\policy}_{s}$ for the vector of the state-action occupation measures at state $s$ under policy $\policy$ and $x^{\pi}$ for the vector of all state-action occupation measures. 

	We use $\lozenge R$ to denote the reachability specification to set $R$. A path $\xi = s_0 s_1 s_2 \ldots$ satisfies $\lozenge R$  if and only if there exists $i$ such that $s_{i} \in R$. On an MDP $\mdp$, the probability that a specification  $\lozenge R$ is satisfied under a policy $\policy$, is denoted by $\Pr^{\policy}_{\mdp}(\initialstate \models \lozenge R)$.

	\section{Problem Statement} \label{section:problemstatement}
	We consider a setting in which an agent operates in a discrete stochastic environment modeled with an MDP $\mdp$, and a supervisor provides a reference policy $\policy^{\svisor}$ to the agent. The supervisor expects the agent to follow $\policy^{\svisor}$ on $\mdp$, thereby performing $\svisornumofspec$ tasks that are specified by reachability specifications $\svisorspeci$ for all $i \in [\svisornumofspec]$. The agent aims to perform another task that is specified by the reachability specification $\agentspec$ and may deviate from the reference policy to follow a different policy $\policy^{\agent}$. In this setting, both the agent and the supervisor know the environment, i.e., the components of $\mdp$.

	While the agent operates in $\mdp$, the supervisor observes the transitions, but not the actions of the agent, to detect any deviations from the reference policy. An agent that does not want to be detected must use a deceptive policy $\policy^{\agent}$ that limits the amount of deviations from reference policy $\policy^{\svisor}$ and achieves $\agentspec$ with high probability.
	
	We use Kullback-Leibler (KL) divergence to measure the deviation from the supervisor's policy. Recall that $\Gamma^{\policy^{\svisor}}_{\mdp}$ and $\Gamma^{\policy^{\agent}}_{\mdp}$ are the distributions of paths under $\policy^{\svisor}$ and $\policy^{\agent}$, respectively. We consider $KL(\Gamma^{\policy^{\agent}}_{\mdp} || \Gamma^{\policy^{\svisor}}_{\mdp})$ as a proxy for the agent's deviations from the reference policy. 
	
	The perspective of information theory provides two motivations for the choice of KL divergence. The obvious motivation is that this value corresponds to the amount of information bits that the reference policy lacks while encoding the agent's path distribution. By limiting the deviations from the reference policy, we aim to make the agent's behavior easily explainable by the reference policy. Sanov's theorem~\cite{cover2012elements} provides the second motivation. We note that satisfying the agent's objective with high probability is a rare event under the supervisor's policy. By minimizing the KL divergence between the policies, we make the agent's policy mimic the rare event that satisfies the agent's objective and is most probable under the supervisor's policy. Formally, let $\policy^{*}$ be a solution to 
		\begin{align*}
		\underset{\policy \in \Policy(\mdp)}{\inf} \quad
		&  KL\left(\Gamma^{\policy}_{\mdp} || \Gamma^{\policy^{\svisor}}_{\mdp} \right) 
		\\
		\text{\normalfont subject to } \quad
		& {\Pr }^{\policy}_{\mdp}(s_0 \models \agentspec) \geq \nu^{\agent}. 
		\end{align*}
 Assume that we simulate $n$ paths under the supervisor's policy. The probability that the observed paths satisfy $\agentspec$ with probability higher than $\agentthr$ is approximately equal to $\exp({-n KL(\Gamma^{\policy^{*}}_{\mdp} || \Gamma^{\policy^{\svisor}}_{\mdp})})$. Furthermore, given that the observed path distribution satisfies $\agentspec$ with a probability higher than $\agentthr$, the most likely distribution is $\Gamma^{\policy^{*}}_{\mdp}$~\cite{cover2012elements}.

		The choice of KL divergence is also justified from the perspective of statistical hypothesis testing. Likelihood-ratio test, the most powerful test for a given significance level~\cite{neyman1933ix}, is a hypothesis test method to compare two models according to their goodness in terms of fitting the data. Assume that the supervisor uses likelihood-ratio test to decide whether the agent followed the reference policy or a different policy. Let $\xi_{1}, \ldots, \xi_{n}$ be the agent's paths that are observed by the supervisor. We denote the probability of $\xi_{1}, \ldots, \xi_{n}$ under $\policy^{\svisor}$ and $\policy^{\agent}$ by $\Pr(\xi_{1}, \ldots, \xi_{n} | \policy^{\svisor})$ and  $\Pr(\xi_{1}, \ldots, \xi_{n} | \policy^{\agent})$, respectively. By the likelihood-ratio test, the supervisor decides that the agent uses $\policy^{\agent}$ if $\log(\Pr(\xi_{1}, \ldots, \xi_{n} | \policy^{\agent})) - \log(\Pr(\xi_{1}, \ldots, \xi_{n} | \policy^{\svisor})) \geq c$, and the agent uses $\policy^{\svisor}$ otherwise where $c$ is a constant. We note that $\policy^{*}$ minimizes $n KL(\Gamma^{\policy}_{\mdp} || \Gamma^{\policy^{\svisor}}_{\mdp}) = \mathbb{E} \left[ \log(\Pr(\xi_{1}, \ldots, \xi_{n} | \policy^{\agent})) - \log(\Pr(\xi_{1}, \ldots, \xi_{n} | \policy^{\svisor})) \right]$ subject to ${\Pr }^{\policy}_{\mdp}(s_0 \models \agentspec) \geq \agentthr$. Therefore, in expectation $\policy^{*}$ is most likely policy to be not detected by the supervisor among the policies that satisfy ${\Pr }^{\policy}_{\mdp}(s_0 \models \agentspec) \geq \agentthr$.

	We propose the following problem for the synthesis of deceptive policies for the agents.
	\begin{problem}[Synthesis of Optimal Deceptive Policies]
		\label{problem:mindeviation}
		Given an MDP $\mdp$, a reachability specification $\agentspec$, a probability threshold $\agentthr$, and a reference policy $\policy^{\svisor}$, solve
		\begin{subequations}
			\label{problemeqn:mindeviation}
			\begin{align}
			\underset{\policy^{\agent} \in \Policy(\mdp)}{\inf} \quad
			 &  KL\left(\Gamma^{\policy^{\agent}}_{\mdp} || \Gamma^{\policy^{\svisor}}_{\mdp} \right) \label{eq:pathprobmininf}
			\\
			 \text{\normalfont subject to } \quad
			 & {\Pr }^{\policy^{\agent}}_{\mdp}(\initialstate \models \agentspec) \geq \agentthr. \label{cons:ourtask}
			\end{align}
		\end{subequations} If the optimal value is attainable, find a policy $\policy^{\agent}$ that is a solution to \eqref{problemeqn:mindeviation}.
	\end{problem}
	
	In order to preempt the possibility of that the agent uses a policy $\policy^{\agent}$ that is the best deceptive policy against $\policy^{\svisor}$, the supervisor aims to find a reference policy $\policy^{\svisor}$ that maximizes the divergence between $\policy^{\agent}$ and $\policy^{\svisor}$ subject to ${\Pr}^{\policy^{\svisor}}_{\mdp}(\initialstate \models \svisorspeci) \geq \svisorthri$ for all $i \in [\svisornumofspec]$. We assume that the supervisor knows the agent's task and propose the following problem for the synthesis of reference policies for the supervisor.
	\begin{problem}[Synthesis of Optimal Reference Policies] 
		\label{problem:mindeviation2}
		Given an MDP $\mdp$, reachability specifications $\agentspec$ and $\svisorspeci$ for all $i \in [\svisornumofspec]$, probability thresholds $\agentthr$ and $\svisorthri$ for all $i \in [\svisornumofspec]$, solve
		\begin{subequations} \label{problemeqn:mindeviation2}
			\begin{align}
			\underset{ \policy^{\svisor} \in \Policy(\mdp)}{\sup} \ \underset{ \policy^{\agent} \in \Policy(\mdp)}{\inf} \quad
			 &  KL\left(\Gamma^{\policy^{\agent}}_{\mdp} || \Gamma^{\policy^{\svisor}}_{\mdp} \right) \label{eq:pathprobmininf2}
			\\
			 \text{\normalfont subject to } \quad
& {\Pr }^{\policy^{\agent}}_{\mdp}(\initialstate \models \agentspec) \geq \agentthr,
			\\
			  \quad
& {\Pr }^{\policy^{\svisor}}_{\mdp}(\initialstate \models \svisorspeci) \geq \svisorthri, &&  \forall i \in [\svisornumofspec]. \label{cons:supervisorstask2}
			\end{align}
		\end{subequations}
	If the supremum is attainable, find a policy $\policy^{\svisor}$ that is a solution to \eqref{problemeqn:mindeviation2}.
	\end{problem}

			\begin{wrapfigure}{r}{0.23\textwidth}
	\centering
	\resizebox{0.2\textwidth}{!}{ \centering
		\begin{tikzpicture}
		\node[state, initial]  (s0) {$s_0$};
		\node[state] [right=of s0] (s2) {$s_2$};
		\node[state] [above=of s2] (s1) {$s_1$};
		\node[state] [below=of s2] (s3) {$s_3$};
		\draw 
		(s0) edge[bend left=40] node[above, sloped] {$\alpha,1$} (s1)
		(s0) edge[bend right=40] node[below, sloped] {$\gamma,1$} (s3)
		(s0) edge[bend right=0] node[below, sloped] {$\beta,0.1$} (s1)
		(s0) edge node[below, sloped] {$\beta,0.8$} (s2)
		(s0) edge node[below, sloped] {$\beta,0.1$} (s3)
		(s1) edge[loop right] node[right] {$\alpha,1$} (s1)
		(s2) edge[loop right] node[right] {$\alpha,1$} (s2)
		(s3) edge[loop right] node[right] {$\alpha,1$} (s3);
		\end{tikzpicture}}
	\caption{An MDP with 4 states. A label $a,p$ of a transition refers to the transition that happens with probability $p$ when action $a$ is taken.}
	\label{fig:basicexample}
\end{wrapfigure}

	\textbf{Example 1:}
		We explain the synthesis of optimal deceptive policies and reference policies through the MDP $\mdp$ given in Figure \ref{fig:basicexample}. Note that the policies for $\mdp$ may vary only at $s_{0}$ since it is the only state with more than one action. 
		
		We first consider the synthesis of optimal deceptive policies where the reference policy satisfies $\policy^{\svisor}_{s_{0},\beta}= 1$. Consider $\agentspec = \lozenge \lbrace s_{3} \rbrace$ and $\agentthr = 0.2$. Assume that the agent's policy has $\policy^{\agent}_{s,\gamma} = 1$. The value of the KL divergence is $2.30$. However, note that as $\policy^{\agent}_{s,\beta}$ increases, the KL divergence decreases. In this case, the optimal policy  satisfies $\policy^{\agent}_{s,\beta} = 0.89$ and $\policy^{\agent}_{s,\gamma} = 0.11$ and the optimal value for the KL divergence is $0.04$.
		 
		We now consider the synthesis of optimal reference policies where the supervisor has a single specification $\lozenge R^{\svisor}  = \lozenge \lbrace s_{1},  s_{2}\rbrace $ and $\nu^{\svisor} = 0.9$. Consider $\agentspec = \lozenge \lbrace s_{3} \rbrace$ and $\agentthr = 0.1$. Assume that we have $\policy^{\svisor}_{s_{0}, \beta}=1$. In this case, the agent can directly follow the reference policy and make the KL divergence zero. This reference policy is not optimal; the supervisor, knowing the malicious objective of the agent, can choose the reference policy with $\policy^{\svisor}_{s_{0}, \alpha}=1$, which does not allow any deviations and makes the KL divergence infinite.

	\section{Synthesis of Optimal Deceptive Policies} \label{section:deceptive}
	In this section, we explain the synthesis of optimal deceptive policies. Before proceeding to the synthesis step, we make assumptions to simplify the problem. Then, we show the existence of an optimal deceptive policy and give an optimization problem to synthesize one.

	Without loss of generality, we make the following assumption on the target states of the agent and the supervisor for the clarity of representation. This assumption ensures that the probability of completing a task is constant, either $0$ or $1$, upon reaching a target state.
	\begin{assumption} \label{assumption:closednessanddisjointness}
	Every $s \in R^{\agent} \cup R^{\svisor}_{1} \cup \ldots \cup R^{\svisor}_{\svisornumofspec}$ is absorbing.
	\end{assumption}

	We remark that in the absence of Assumption \ref{assumption:closednessanddisjointness}, one can still find the optimal deceptive policy by constructing a product MDP that encodes both the state of the original MDP and the statuses of the tasks. In detail, we need to construct a joint deterministic finite automaton whose states encode the statuses of the specifications for the agent and the supervisor. After creating the joint deterministic finite automaton (DFA), we construct a product MDP by combining the original MDP and the joint DFA and synthesize a policy on the product state space. Since there is a one-to-one mapping between the paths of the original MDP and the product MDP, the synthesized policy for the product MDP can be translated into a policy  for the original MDP~\cite{baier2008principles}.

If the reference policy is not stationary, we may need to compute the optimal deceptive policy by considering the parameters of the reference policy at different time steps. Such computation leads to a state explosion, which we avoid by adopting the following assumption.
	\begin{assumption} \label{assumption:supervisorisstationary}
		The reference policy $\policy^{\svisor}$ is stationary on $\mdp$.
	\end{assumption}

	 In many applications the supervisor aims to achieve the specifications with the maximum possible probabilities. Under Assumption \ref{assumption:closednessanddisjointness}, stationary policies suffice to achieve the Pareto optimal curve for maximizing the probabilities of multiple reachability specifications~\cite{etessami2007multi}.

	Without loss of generality, we assume that the optimal value of Problem \ref{problem:mindeviation} is finite. One can easily check whether the optimal value is finite in the following way. Assume that the transition probability between a pair of states is zero under the reference policy. One can create a modified MDP from $\mdp$ by removing the actions that assign a positive value to such state-state pairs. If there exists a policy that satisfies the constraint \eqref{cons:ourtask} then the value is finite.

	Given that the optimal value of Problem \ref{problem:mindeviation} is finite, we first identify the three sets of states where the agent should follow the reference policy. Firstly, the agent's policy should not be different from the supervisor's policy on the states that belong to $R^{\agent}$, since the specification of the agent is already satisfied. Secondly, the agent should follow the reference policy at states that are recurrent under the reference policy. Formally, the reference policy $\policy^{\svisor}$ induces a Markov chain $\mdp^{\svisor}$. A state is recurrent in $\mdp^{\svisor}$ if it belongs to some closed communicating class. The agent should follow the reference policy if a state is recurrent in $\mdp^{\svisor}$.

  For the second claim, we first remark that every closed communicating class $C \subset \states$ of $\mdp^{\svisor}$ satisfy either $1)$ $C \cap (\states \setminus R^{\agent}) \neq \emptyset$ and $C \cap  R^{\agent} = \emptyset$, or $2)$ $C \cap (\states \setminus R^{\agent}) = \emptyset$ and $C \cap  R^{\agent} \neq \emptyset$. This is due to the fact that $R^{\agent}$ is a closed set, i.e., a state in $R^{\agent}$ is reached and the states in $\states \setminus R^{\agent}$ are not accessible. Hence, there cannot be a closed communicating class of $\mdp^{\svisor}$ that has states in both $ R^{\agent}$ and $\states \setminus R^{\agent}$ . Let $\closedset$ be the union of all closed communicating classes of $\mdp^{\svisor}$, i.e., the recurrent states of $\mdp^{\svisor}$. Note that $\closedset \setminus  R^{\agent}$ is a closed set in $\mdp^{\svisor}$ and the states in $R^{\agent}$ are not accessible from $\closedset \setminus  R^{\agent}$ in $\mdp^{\svisor}$ due to the above discussion.

Assume that under the agent's policy $\policy^{\agent}$, there exists a path that visits a state in $\closedset \setminus  R^{\agent}$ and leaves $\closedset \setminus  R^{\agent}$ with positive probability. In this case, the KL divergence is infinite since an event that happens with probability zero under the supervisor's policy happens with a positive probability under the agent's policy. Hence, $\closedset \setminus  R^{\agent}$ must also be a closed set under $\policy^{\agent}$. Furthermore, since the agent cannot leave $\closedset \setminus  R^{\agent}$, and the probability of satisfying $\lozenge R^{\agent}$ is zero upon entering $\closedset \setminus  R^{\agent}$, the agent should choose the same policy as the supervisor to minimize the KL divergence between the distributions of paths. Note that for the recurrent states in $R^{\agent}$, i.e., $\closedset \cap R^{\agent} $, the second claim is trivially satisfied by the first claim.

	For all $s \in \states \setminus(C^{cl} \cup R^{\agent})$, $s$ is transient in $\mdp^{\svisor}$, and the agent's policy must eventually stop visiting $s$, since otherwise we have infinite divergence. Furthermore, we have the following proposition.

	\begin{proposition}
		\label{proposition:expectedtimesarefinite}
		If the optimal value of Problem \ref{problem:mindeviation} is finite and the optimal policy is $\policy^{\agent}$, the state-action occupation measure $x^{\policy^{\agent}}_{s,a}$ is finite for all $s \in S \setminus(C^{cl} \cup R^{\agent})$ and $a \in A(s)$.
	\end{proposition}

	The occupation measures are bounded for the states that the agent's policy may differ from the supervisor's policy. Since the occupation measures are bounded, the stationary policies suffice for the synthesis of optimal deceptive policies~\cite{altman1999constrained}.
	\begin{proposition} \label{proposition:stationaryisenough}
		For any policy $\pi^{\agent} \in \Pi(\mdp)$ that satisfies ${\Pr }^{\policy^{\agent}}_{\mdp}(s_0 \models \agentspec) \geq \agentthr$, there exists a stationary policy $\policy^{\agent, St} \in \Pi(\mdp)$ that satisfies ${\Pr }^{\policy^{\agent, St}}_{\mdp}(s_0 \models \agentspec) \geq \agentthr$ and $$KL\left(\Gamma^{\policy^{\agent, St}}_{\mdp} || \Gamma^{\policy^{\svisor}}_{\mdp} \right) \leq KL\left(\Gamma^{\policy^{\agent}}_{\mdp} || \Gamma^{\policy^{\svisor}}_{\mdp} \right).$$
	\end{proposition}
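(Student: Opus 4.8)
The plan is to pass from path distributions to expected state-action residence times, exploit convexity of the KL divergence, and then invoke the standard correspondence between occupation measures and stationary policies. If $KL(\Gamma^{\policy^{\agent}}_{\mdp_p}\|\Gamma^{\policy^{\svisor}}_{\mdp_p})$ is infinite the claimed inequality is trivial, so I would assume it is finite. By the discussion preceding Proposition \ref{proposition:expectedtimesarefinite}, finiteness forces $\policy^{\agent}$ to agree with $\policy^{\svisor}$ on $\closedset$, and by Proposition \ref{proposition:expectedtimesarefinite} the residence times $x^{\policy^{\agent}}_{s,a}$ are finite for every $s\in\states\setminus\closedset$.

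First I would decompose the path divergence by the chain rule for relative entropy (telescoping over finite prefixes and taking the limit). Since $\policy^{\svisor}$ is stationary (Assumption \ref{assumption:supervisorisstationary}) and both policies induce Markov state processes (time-inhomogeneous for $\policy^{\agent}$),
\[
KL\big(\Gamma^{\policy^{\agent}}_{\mdp_p}\|\Gamma^{\policy^{\svisor}}_{\mdp_p}\big)=\sum_{t=0}^{\infty}\sum_{s}\Pr^{\policy^{\agent}}(s_t=s)\,KL\big(\probs^{\policy^{\agent}}_t(s,\cdot)\,\|\,\probs^{\policy^{\svisor}}(s,\cdot)\big),
\]
where $\probs^{\policy^{\agent}}_t(s,\cdot)$ is the next-state distribution from $s$ at time $t$ and $\probs^{\policy^{\svisor}}(s,\cdot)$ is the time-invariant reference transition. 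The terms with $s\in\closedset$ vanish because $\policy^{\agent}$ matches $\policy^{\svisor}$ there, so the sum runs effectively over $s\in\states\setminus\closedset$, where the weights $X_s:=\sum_t\Pr^{\policy^{\agent}}(s_t=s)=\sum_a x^{\policy^{\agent}}_{s,a}$ are finite.

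Next I would define the candidate stationary policy by $\policy^{\agent,St}_{s,a}=x^{\policy^{\agent}}_{s,a}/X_s$ whenever $X_s>0$ (and equal to $\policy^{\svisor}$ elsewhere). A direct computation shows that the induced agent transition $\probs^{\policy^{\agent,St}}(s,\cdot)$ equals the residence-time-weighted average $\sum_t(\Pr^{\policy^{\agent}}(s_t=s)/X_s)\,\probs^{\policy^{\agent}}_t(s,\cdot)$, i.e. a convex combination of the time-indexed transition distributions. Because $KL(\cdot\,\|\,\probs^{\policy^{\svisor}}(s,\cdot))$ is convex in its first argument and the reference transition is the same for every $t$, Jensen's inequality gives, state by state,
\[
X_s\,KL\big(\probs^{\policy^{\agent,St}}(s,\cdot)\,\|\,\probs^{\policy^{\svisor}}(s,\cdot)\big)\le \sum_t\Pr^{\policy^{\agent}}(s_t=s)\,KL\big(\probs^{\policy^{\agent}}_t(s,\cdot)\,\|\,\probs^{\policy^{\svisor}}(s,\cdot)\big).
\]
Summing over $s$, and using that $\policy^{\agent,St}$ reproduces the same occupation measure so that its contribution at $s$ is exactly the left-hand side, yields the desired bound on the total divergence.

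Finally I would verify feasibility. The residence times $x^{\policy^{\agent}}_{s,a}$ satisfy the flow-balance equations, and both the occupation measure itself and the probability of reaching $\agentsset$ are linear functionals of these variables; the standard occupation-measure lemma then guarantees that $\policy^{\agent,St}$ realizes exactly the same $x^{\policy^{\agent}}_{s,a}$ on the transient states, hence the same ${\Pr}^{\policy^{\agent,St}}_{\mdp_p}(s_{0_{p}}\models\lozenge\agentsset)\ge\nu^{\agent}$. The main obstacle I anticipate is making this last correspondence airtight: I must separate states in closed communicating classes (where residence times may be infinite but the local divergence and the inflow to $\agentsset$ are both zero) from the transient states, and invoke invertibility of $I-\probs^{\policy^{\agent,St}}$ on the transient part to conclude that the balance equations pin down a \emph{unique} occupation measure, so that the reachability probability is preserved rather than merely bounded.
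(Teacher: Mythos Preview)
Your argument is correct and rests on the same two pillars as the paper's proof---convexity of $KL(\cdot\,\|\,\probs^{\policy^{\svisor}}(s,\cdot))$ and the occupation-measure correspondence---but the route differs. The paper recasts the problem as a constrained total-cost MDP on a \emph{semi-infinite} action space: each randomized action of $\mdp_p$ becomes a single deterministic action whose per-step cost is $KL(X_{s,a}\|X^{\svisor}_s)$, and one then appeals to the general fact that stationary policies suffice for such problems, followed by a second Jensen step showing that a deterministic stationary policy in the semi-infinite MDP (equivalently, a stationary randomized policy in $\mdp_p$) is optimal. You instead stay in the original MDP, write out the chain-rule decomposition explicitly (the non-stationary analogue of Lemma~\ref{lemma:globalislocal}), build $\policy^{\agent,St}$ directly from the residence times, and apply Jensen state by state. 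Your approach is more elementary and self-contained; the paper's buys a cleaner separation between the ``stationary suffices'' step (delegated to constrained-MDP theory) and the convexity step. The technicality you flag---uniqueness of the occupation measure on the transient block via invertibility of $I-\probs^{\policy^{\agent,St}}$---is exactly what the paper sweeps into the phrase ``every finite expected residence time vector of $S_d$ can also be achieved by a stationary policy,'' so you have identified the right place where care is needed.
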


	\begin{proof}[\textbf{Sketch of Proof for Proposition \ref{proposition:stationaryisenough}}]
	Assume that the KL divergence between the path distributions is finite. Note that the occupation measures of $\policy^{\agent}$ are finite for all $s \in \differset = \states \setminus(C^{cl} \cup R^{\agent})$. 
	
	When the reference policy is stationary, we may transform $\mdp$ into a \textit{semi-infinite MDP}. The semi-infinite MDP shares the same states with $\mdp$, but has continuous action space such that for all states every randomized action of $\mdp$ is an action of the semi-infinite MDP. Also the states belong to $\agentsset$ and $\closedset$ are absorbing in the semi-infinite MDP. 
	
	Let $X^{\svisor}_{s}$ be the successor state distribution at state $s$ under the reference policy in the semi-infinite MDP. At state $s \in \differset$, an action $a$ with successor state distribution $X_{s,a}$ has cost $KL(X_{s,a} || X^{\svisor}_{s})$. The cost is $0$ for the other states that do not belong to $\differset$. Consider an optimization problem that minimizes the expected cost subject to reaching $\agentsset$ with probability at least $\nu^{\agent}$. The result of this optimization problem shares the same value with the result of Problem \ref{problem:mindeviation}. This problem is a constrained cost minimization for an MDP where the only decision variables are the state-action occupation measures. An optimal policy can be characterized by the state-action occupation measures.
	
	The occupation measures must be finite for all $s \in \differset$ as we showed in Proposition \ref{proposition:expectedtimesarefinite}. Since every finite occupation measure vector of $\differset$ can also be achieved by a stationary policy, there exists a stationary policy which shares the same occupation measures with an optimal policy~\cite{altman1999constrained}. Hence, this stationary policy is also optimal. 
	
	Now assume that the stationary optimal policy $\policy^{*}$ is randomized. Let $\pi^{*}_{s}$ be the action distribution and $X^{\pi^{*}}_{s}$ be the successor state distribution at state $s$ under $\policy^{*}$. Note that at state $s$ there exists an action $a^*$ that has $P(s,a^*,q) = X^{\policy}_{s}(q)$ since the action space is convex for the semi-infinite MDP. Also due to the convexity of KL divergence we have $\int_{\Delta^{|A(s)|}} KL(X_{s,a} || X_{s}^{\mathcal{S}}) d \pi^{*}_{s}(a) \geq KL(X^{\pi^{*}}_{s} || X_{s}^{\mathcal{S}})$ where $\Delta^{|A(s)|}$ is $|A(s)|$-dimensional probability simplex. Hence, deterministically taking action $a^{*}$ is optimal for state $s$. By generalizing this argument to all $s\in S_{s}$, we conclude that there exists an optimal stationary deterministic policy for the semi-infinite MDP. Without loss of generality we assume $\policy^{*}$ is stationary deterministic.
	
	We note that the stationary deterministic policy $\policy^{*}$ of the semi-infinite MDP corresponds to a stationary randomized policy for the original MDP $\mdp$. Hence the proposition holds.
	
\end{proof}

	We denote the set of states for which the agent's policy can differ from the supervisor's policy by $\differset = \states \setminus (\closedset \cup \agentsset)$. We solve the following optimization problem to compute the  occupation measures of an optimal deceptive policy:
	\begin{subequations}
	\label{prog:stateprogram}
	\begin{align}
	&\inf \quad \sum_{s \in \differset } \sum_{a \in \actions(s)} \sum_{q \in \successor(s)}  x^{\agent}_{s,a} \probs_{s,a,q} \nonumber
	\\& \hspace{1cm}\log \left( \frac{\sum_{a' \in \actions(s)} x^{\agent}_{s,a'} \probs_{s,a',q}}{ \policy^{\svisor}_{s,q} \sum_{a' \in \actions(s)} x^{\agent}_{s,a'}} \right) \label{eq:mininfrestime}
	\\
	& \normalfont \text{subject to } \quad \nonumber
	\\
	& x^{\agent}_{s,a} \geq 0,  \hspace{3.4cm}
	\forall s \in \differset, \ 
	\forall a \in \actions(s), \label{cons:positiveactions}
	\\
	& \hspace{-0.1cm}  \sum_{a \in \actions(s)} x^{\agent}_{s,a} - \sum_{q \in \differset}  \sum_{a \in \actions(q)} x^{\agent}_{q,a}\probs_{q,a,s} = \mathds{1}_{s_0}(s), \forall s \in \differset, \label{cons:floweqn}  
	\\
	& \sum_{q \in R^{\agent}} \sum_{s \in \states_{d}}  \sum_{a \in \actions(s)}  x^{\agent}_{s,a}\probs_{s,a,q}  + \mathds{1}_{s_0}(q) \geq \agentthr \label{cons:reach1}
	\end{align}
\end{subequations}
	where $\policy^{\svisor}_{s,q}$ is the transition probability from $s$ to $q$ under $\policy^{\svisor}$ and the decision variables are $x^{\agent}_{s,a}$ for all $s \in \differset$ and $a \in \actions(s)$. The objective function \eqref{eq:mininfrestime} is obtained by reformulating the KL divergence between the path distributions as the sum of the KL divergences between the successor state distributions for every time step (See Lemma \ref{lemma:globalislocal} in Appendix \ref{section:proofs}). The constraint \eqref{cons:floweqn} encodes the feasible policies and the constraint \eqref{cons:reach1} represents the task constraint.

	\begin{proposition}\label{proposition:optimalisachieved}
		The optimization problem given in \eqref{prog:stateprogram} is a convex optimization problem that shares the same optimal value with \eqref{problem:mindeviation}. Furthermore, there exists a policy $\policy \in \Pi^{St}(\mdp)$ that attains the optimal value of \eqref{prog:stateprogram}.
	\end{proposition}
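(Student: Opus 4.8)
The plan is to establish the three assertions of the proposition in turn: convexity of \eqref{prog:stateprogram}, equality of its optimal value with that of \eqref{problem:mindeviation3}, and attainment of the optimum by a stationary policy.

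\textbf{Convexity.} First I would note that the constraints \eqref{cons:positiveactions}, \eqref{cons:floweqn}, and \eqref{cons:reach1} are all affine in the decision variables $x^{\agent}_{s,a}$, so the feasible set is a polyhedron and hence convex. For the objective, I would fix $s \in \differset$ and $q \in \successor(s)$ and set $f_{q} = \sum_{a'} x^{\agent}_{s,a'} \probs_{s,a',q}$ and $g = \sum_{a'} x^{\agent}_{s,a'}$, both affine in $x^{\agent}$. Carrying out the sum over $a$ collapses the summand of \eqref{eq:mininfrestime} to $f_{q}\log(f_{q}/(\policy^{\svisor}_{s,q} g)) = f_{q}\log(f_{q}/g) - f_{q}\log \policy^{\svisor}_{s,q}$. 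The second term is linear, and $f_{q}\log(f_{q}/g)$ is the perspective of the convex map $u \mapsto u\log u$, hence jointly convex in $(f_{q},g)$; precomposing with the affine map $x^{\agent}\mapsto(f_{q},g)$ preserves convexity, and summing convex functions yields a convex objective. Equivalently, I would record that the epigraph constraint $t_{q}\ge f_{q}\log(f_{q}/g)$ is exactly the exponential-cone membership $(-t_{q},f_{q},g)\in\mathcal{K}_{\exp}$, so \eqref{prog:stateprogram} is an exponential-cone program, which simultaneously certifies convexity and tractability.

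\textbf{Equality of optimal values.} Here I would lean on the machinery already in place. By Proposition \ref{proposition:stationaryisenough} the infimum in \eqref{problem:mindeviation3} is unchanged when restricted to stationary policies, and by Proposition \ref{proposition:expectedtimesarefinite} any finite-value policy has finite expected residence times off the closed classes. For a stationary policy I would invoke Lemma \ref{lemma:globalislocal} to rewrite $KL(\Gamma^{\policy^{\agent}}_{\mdp_{p}} || \Gamma^{\policy^{\svisor}}_{\mdp_{p}})$ as the sum over states of the residence-time-weighted one-step divergences. Since $\differset=\states_{p}\setminus(\supervisorsset\cup\agentsset)$ and, as argued before the proposition, an optimal agent does not deviate from $\policy^{\svisor}$ outside $\differset$, those states contribute zero and only the sum over $\differset$ survives, recovering exactly \eqref{eq:mininfrestime}. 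I would then set up the standard two-way correspondence between stationary policies and occupation measures: given a policy, its residence times $x^{\agent}_{s,a}$ satisfy the balance equations \eqref{cons:floweqn} and make the left side of \eqref{cons:reach1} equal to $\Pr^{\policy^{\agent}}_{\mdp_{p}}(s_{0_{p}}\models \lozenge C_{\agent})$; conversely, given feasible $x^{\agent}$, the policy $\policy_{s,a}=x^{\agent}_{s,a}/\sum_{a'} x^{\agent}_{s,a'}$ (arbitrary where the denominator vanishes) realizes these residence times and reachability value. As the objective value coincides under this correspondence, the two optimal values agree.

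\textbf{Attainment and main obstacle.} Finiteness of the optimal value (assumed without loss of generality) makes the feasible set nonempty, and the objective is convex and lower semicontinuous under the KL boundary conventions, so it remains to show the infimum is attained. The main obstacle is that the feasible polyhedron may be unbounded, so compactness cannot be invoked directly, and the boundary where $g\to 0$ must be controlled. I would resolve this by showing that every recession direction of the feasible set corresponds to unbounded cycling within $\differset$; since $\differset$ is transient under $\policy^{\svisor}$, sustaining such cycling forces the agent's one-step distributions to differ from $\policy^{\svisor}$, so the residence-time-weighted local divergence grows without bound along that direction. Hence every finite sublevel set is bounded, and together with closedness this gives attainment at some $x^{\ast}$. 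The backward direction of the correspondence applied to $x^{\ast}$ then yields a stationary policy $\policy\in\Pi^{St}(\mdp)$ achieving the optimal value. I expect this recession/coercivity argument, together with the careful verification that the recovered policy is transient on $\differset$ so that its residence times genuinely equal $x^{\ast}$, to be the delicate part.
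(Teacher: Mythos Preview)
Your convexity and equality-of-values arguments are essentially the paper's: the paper too recognises each per-state summand as an (unnormalised) KL divergence, hence convex in the occupation variables, and invokes Lemma~\ref{lemma:globalislocal} together with the standard occupation-measure/stationary-policy correspondence to match \eqref{prog:stateprogram} with \eqref{problem:mindeviation3}.

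Where you diverge is the attainment step. You prove coercivity along recession directions: any nonzero recession direction of the feasible polyhedron induces a closed nonnegative flow supported in $\differset$, which cannot coincide with $\policy^{\svisor}$ everywhere on its support (else that support would be closed under $\policy^{\svisor}$, contradicting transience of $\differset$), and at a state of disagreement the residence-time-weighted one-step KL tends to infinity. This is correct but, as you anticipate, the part that needs care. The paper bypasses this entirely by recycling Proposition~\ref{proposition:expectedtimesarefinite}: that result already supplies state-by-state bounds $M_s$ on the residence times at any finite-value feasible point, so one may append the box constraints $x^{\agent}_{s,a}\le M_s$ without changing the optimal value; the feasible region is then compact and attainment follows from continuity of the objective. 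Your route is self-contained and does not depend on the quantitative content of Proposition~\ref{proposition:expectedtimesarefinite}; the paper's route is shorter because the heavy lifting was already done there.
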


	The optimization problem given in \eqref{prog:stateprogram} gives the optimal  state-action occupation measures for the agent. One can synthesize the optimal deceptive policy $\policy^{\agent}$ using the relationship $x^{\agent}_{s,a} = \policy^{\agent}_{s,a} \sum_{a' \in A(s)} x^{\policy}_{s,a'}$ for all $s \in \differset$ and $\policy^{\agent}_{s,a} = \policy^{\svisor}_{s,a}$ for the other states.

	The optimization problem given in \eqref{prog:stateprogram} can be considered as a constrained MDP problem with an infinite action space~\cite{altman1999constrained} and a nonlinear cost function. This equivalence follows from that there exists a deterministic policy that incurs the same cost on the infinite action MDP for every randomized policy for $\mdp$. Since there exists a deterministic optimal policy for the infinite MDP, we can represent the objective function and constraints of Problem \ref{problem:mindeviation} with the occupancy measures. However, we remark that \eqref{prog:stateprogram} is a convex nonlinear optimization problem whereas the constrained MDPs are often modeled with a linear cost function and solved using linear optimization methods.

\begin{remark}
		The methods provided in this section can be generalized to task constraints that are co-safe LTL specifications. In detail, every co-safe LTL can be translated into a DFA~\cite{kupferman2006construction}. By combining the MDP and the DFA, we get the product MDP. Since the co-safe LTL specifications translates into reachability specifications on the product MDP and there is a one-to-one mapping between the paths of the original MDP and the product MDP, we can apply the methods described in this section to compute an optimal deceptive policy.
\end{remark}

\section{Synthesis of Optimal Reference Policies} \label{section:reference}
	In this section, we prove the hardness of Problem \ref{problem:mindeviation2}. We give an optimization problem based on dualization approach to synthesize locally optimal reference policies. We provide a distributed optimization algorithm based on ADMM for synthesis of locally optimal reference policies. We also derive a lower bound on the objective function and give a linear programming relaxation of Problem \ref{problem:mindeviation2}.
	
	The optimization problem given in \eqref{prog:stateprogram} has the supervisor's policy parameters as constants. We want to solve the optimization problem given in \eqref{prog:stateprogram} to formulate the synthesis of optimal reference policies by adding the supervisor's policy parameters as additional decision variables. The set $\closedset$ is the set of states that belong to a closed communicating class of $\mdp^{\svisor}$. In  \eqref{prog:stateprogram}, $\closedset$ is a constant set for a given reference policy, but it may vary under different reference policies. We make the following assumption to prevent set $\closedset$ from varying under different reference policies.
	\begin{assumption} \label{assumption:copisfixed}
		The set $\closedset$ is the same for all reference policies considered in Problem \ref{problem:mindeviation2}.
	\end{assumption}
	\begin{remark}
		Assumption \ref{assumption:copisfixed} is made for the clarity of representation. In the absence of Assumption \ref{assumption:copisfixed}, one can to compute the optimal reference policy for different values of $\closedset$. However, we remark that since, in general, $\closedset$ can have $\mathcal{O}(2^{|\states|})$ values, computing the optimal reference policy for different values of $\closedset$ may have exponential complexity in $|\states|$.
	\end{remark}
	Under Assumptions \ref{assumption:supervisorisstationary} and \ref{assumption:copisfixed}, the optimal value of Problem \ref{problem:mindeviation2} is equal to the optimal value of the following optimization problem:
	\begin{subequations}
		\label{prog:stateprogramsupervisor}
		\begin{align}
		&\underset{x^{\svisor}_{s,a}}{\sup} \quad
		\underset{x^{\agent}_{s,a}}{\inf}  \quad  \sum_{s \in \states_{d} } \sum_{a \in \actions(s)} \sum_{q \in \successor(s)} x^{\agent}_{s,a} \probs_{s,a,q} \\
		&\hspace{2cm} \log \left( \frac{\sum_{a' \in \actions(s)} x^{\agent}_{s,a'} \probs_{s,a',q}}{ \policy^{\svisor}_{s,q} \sum_{a' \in \actions(s)} x^{\agent}_{s,a'}} \right) \label{eq:combinedobjective}
		\\
		&\normalfont \text{subject to} \quad \nonumber
		\\
		& \eqref{cons:positiveactions} - \eqref{cons:reach1} \nonumber
		\\
		& \policy^{\svisor}_{s,q} = \sum_{a \in \actions(s)} \probs_{s,a,q} \frac{x^{\svisor}_{s,a}}{\sum_{a' \in \actions(s)} x^{\svisor}_{s,a'}}, \hspace{0.1cm}\forall s \in \states_{d}, \ \forall q \in \states, \label{cons:policyrestimesupervisor} 
		\\
		&  x^{\svisor}_{s,a} \geq 0, \hspace{3.4cm} \forall s \in \states_d, \ 
		\forall a \in \actions(s), \label{cons:positiveactionssupervisor}
		\\
		& \hspace{-0.1cm} \sum_{a \in \actions(s)} x^{\svisor}_{s,a} - \sum_{q \in \states_{d}}  \sum_{a \in \actions(q)} x^{\svisor}_{q,a}\probs_{q,a,s} = \mathds{1}_{s_0}(s), \forall s \in \states_{d},  \label{cons:floweqnsupervisor}
		\\
		& \sum_{q \in R^{\svisor}_{i}} \sum_{s \in \states_{d} \setminus C_{\svisor}}  \sum_{a \in \actions(s)}  x^{\svisor}_{s,a}\probs_{s,a,q}  + \mathds{1}_{s_0}(q) \geq \svisorthri, \  \forall i \in[\svisornumofspec]  \label{cons:reachabilitysupervisor}
		\end{align}
	\end{subequations}
	where $x^{\svisor}_{s,a}$ variables are the decision variables for the supervisor and $x^{\agent}_{s,a}$ variables are the decision variables for the agent. 
	
	\begin{remark}
		The optimization problem given in \eqref{prog:stateprogramsupervisor} has undefined points due to the denominators in \eqref{eq:combinedobjective} and \eqref{cons:policyrestimesupervisor}, that are ignored in the above optimization problem for the clarity of representation. If $\sum_{a \in A(s)}x^{\svisor}_{s,a} = 0$, then the state $s$ is unreachable and if the KL divergence between the policies is finite, the state must be unreachable also under $\policy^{\agent}$. Hence there is no divergence at state $s$. If $\policy^{\svisor}_{s,q} =0$ and if the KL divergence between the policies is finite, $x^{\agent}_{s,q}$ must be $0$. Hence there is no divergence for state $s$ and successor state $q$. 
	\end{remark}
  
	We can show the existence of an optimal reference policy if the condition given in Proposition \ref{proposition:supervisorattains} is satisfied. This condition ensures that the objective function of the problem in  \eqref{prog:stateprogramsupervisor} is finite for all pairs of the supervisor's and the agent's policies.  
	\begin{proposition} \label{proposition:supervisorattains}
		If $\probs_{s,a,q} > 0$ for all $s \in \states_{d}$, $a \in \actions(s)$, and $q \in Succ(s)$, then there exists a policy $\policy^{\svisor}$ that attains the optimal value of the optimization problem given in \eqref{prog:stateprogramsupervisor}.
	\end{proposition}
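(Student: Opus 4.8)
The plan is to prove existence by the Weierstrass extreme value theorem: I would exhibit a compact set over which the supervisor effectively optimizes and show that the inner value function is upper semicontinuous on it. The first observation is that both the objective \eqref{eq:combinedobjective} and the agent's constraints \eqref{cons:positiveactions}--\eqref{cons:reach1} depend on the supervisor only through the induced transition probabilities $\policy^{\svisor}_{s,q}$, which by \eqref{cons:policyrestimesupervisor} depend only on the supervisor's normalized action frequencies at each state. I would therefore regard the supervisor's decision variable as the stationary policy itself, ranging over the product of simplices $\prod_{s \in \differset} \Delta(\actions(s))$, a compact set. For a fixed supervisor policy $\policy^{\svisor}$, let $g(\policy^{\svisor})$ denote the optimal value of the inner infimum, i.e., the value of the agent's problem \eqref{prog:stateprogram}; by Proposition \ref{proposition:optimalisachieved} this value is finite, nonnegative, and attained.

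The hypothesis $\probs_{s,a,q}>0$ for all $s\in\differset$, $a\in\actions(s)$, $q\in\successor(s)$ is precisely what makes $g$ well behaved. For any supervisor policy one has $\policy^{\svisor}_{s,q}=\sum_{a\in\actions(s)}\probs_{s,a,q}\,\policy^{\svisor}_{s,a}\ge \min_{a\in\actions(s)}\probs_{s,a,q}=:\epsilon_{s,q}>0$, so the denominators appearing in \eqref{eq:combinedobjective} are bounded away from zero uniformly over the entire policy space. Consequently the objective is finite for every pair of policies, and for each fixed feasible agent variable $x^{\agent}$ the map $\policy^{\svisor}\mapsto f(x^{\agent},\policy^{\svisor})$ is continuous, since each summand has the form $x^{\agent}_{s,a}\probs_{s,a,q}\bigl(\log(\cdots)-\log\policy^{\svisor}_{s,q}\bigr)$ with $\policy^{\svisor}_{s,q}$ a continuous, strictly positive function of the supervisor's policy.

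The agent's feasible set defined by \eqref{cons:positiveactions}--\eqref{cons:reach1} does not depend on $\policy^{\svisor}$, so $g$ is a pointwise infimum over $x^{\agent}$ of functions that are continuous in $\policy^{\svisor}$; hence $g$ is upper semicontinuous. Next I would argue compactness of the supervisor's feasible set: the reachability probability ${\Pr}^{\policy^{\svisor}}_{\mdp_{p}}(s_{0_{p}}\models\lozenge C_{\svisor})$ is continuous in $\policy^{\svisor}$ on the compact policy space, so the constraint \eqref{cons:reachabilitysupervisor} cuts out a closed, and therefore compact, subset. Assuming this set is nonempty, an upper semicontinuous function on a nonempty compact set attains its supremum, which yields a maximizing policy $\policy^{\svisor}$; the associated residence times $x^{\svisor}$ then exist and are finite because every state of $\differset$ is transient under $\policy^{\svisor}$ (all recurrent states lie in $\closedset$).

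The hard part will be the upper semicontinuity of $g$, which is the step that genuinely uses the positivity hypothesis. Without it, $\policy^{\svisor}_{s,q}$ could approach $0$ while the corresponding agent transition probability stays positive, driving $f$, and possibly $g$, to $+\infty$ along a sequence of policies, as illustrated by the trapping reference policy in the example; the supremum would then be either $+\infty$ or unattained. The uniform lower bound $\epsilon_{s,q}$ removes this pathology and is exactly what lets the infimum-of-continuous-functions argument deliver upper semicontinuity. A secondary point to handle is the passage between the residence-time variables $x^{\svisor}$ in \eqref{prog:stateprogramsupervisor} and the stationary policy; this is routine because \eqref{cons:policyrestimesupervisor} expresses $\policy^{\svisor}_{s,q}$ through the normalized action frequencies alone, so optimizing over feasible $x^{\svisor}$ is equivalent to optimizing over feasible stationary policies.
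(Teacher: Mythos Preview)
Your argument is correct and follows the same Weierstrass-type strategy as the paper, but with two technical simplifications worth noting. The paper parametrizes the supervisor by the residence times $x^{\svisor}$, invokes Assumption~\ref{assumption:copisfixed} to bound both $X^{\svisor}$ and $X^{\agent}$, establishes \emph{joint} continuity of the objective on the compact product $X^{\svisor}\times X^{\agent}$, and then cites a result of Clarke to conclude that $f'(x^{\svisor})=\min_{x^{\agent}} f(x^{\svisor},x^{\agent})$ is continuous. You instead reparametrize the supervisor by the stationary policy, which lands you directly in the compact product of simplices, and you only claim upper semicontinuity of $g$ via the elementary fact that a pointwise infimum of continuous functions is upper semicontinuous. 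This buys you two things: you never need compactness of the agent's feasible set (which the paper only obtains after appending the extraneous bounds $x^{\agent}_{s,a}\le M_s$ from Proposition~\ref{proposition:expectedtimesarefinite}), and you avoid the external reference for continuity of the marginal function. The paper's route gives a slightly stronger intermediate conclusion (continuity rather than upper semicontinuity of the inner value), but that extra strength is not needed for the existence statement. Your observation that the hypothesis forces $\policy^{\svisor}_{s,q}\ge\min_{a}\probs_{s,a,q}>0$ uniformly, and hence that the successor structure is policy-independent, is exactly the mechanism the paper uses, stated more explicitly.
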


We note that the optimization problem given in \eqref{prog:stateprogramsupervisor} is nonconvex. One might wonder whether there exists a problem formulation that yields a convex optimization problem. We show that it is not possible to obtain a convex reformulation of the optimization problem given in \eqref{prog:stateprogramsupervisor}.

We first observe that it is possible that there are multiple locally optimal reference policies. For example, consider the MDP given in Figure \ref{fig:minisnotoptimalexample} where the specification of the agent is $\Pr^{\policy^{\agent}}_{\mdp}(s \models \lozenge q_1 \vee \lozenge q_2) = 1$. Regardless of the reference policy, the agent's policy must have $\policy^{\agent}_{s, \gamma} = 1$ due to his specification. For simplicity, there is no specification for the supervisor, i.e., $\nu^{\svisor}$ is $0$. The optimal reference policy maximizes $0.4\log(0.4/ (0.32x^{\svisor}_{s_0,\alpha} + 0.15x^{\svisor}_{s_0,\beta}+ 0.4x^{\svisor}_{s_0,\gamma})) + 0.6\log(0.6/ (0.08x^{\svisor}_{s_0,\alpha} + 0.15x^{\svisor}_{s_0,\beta}+ 0.6x^{\svisor}_{s_0,\gamma}))$, which is a convex function of $x^{\svisor}_{s_0, \alpha}$, $x^{\svisor}_{s_0, \beta}$, and $x^{\svisor}_{s_0, \gamma}$. There are two locally optimal reference policies for Problem \ref{problem:mindeviation2}: the policy that satisfies $\policy^{\svisor}_{s,\alpha} = 1$ and the policy that satisfies $\policy^{\svisor}_{s,\beta} = 1$. Hence, the problem is not only nonconvex but also possibly multimodal. 

\begin{figure}[t] 
	\centering
	\subfloat[]{\scalebox{0.24}{
		\resizebox{1\textwidth}{!}{ \centering
			\begin{tikzpicture} 
			\node[state, initial]  (s0) {$s$};
			\node[state] [above right=of s0] (s1) {$q_1$};
			\node[state] [below right=of s0] (s3) {$q_3$};
			\node[state] [below right=of s1] (s2) {$q_2$};
			\draw 
			(s0) edge[bend left=80] node[above, sloped] {$\alpha,0.32$} (s1)
			(s0) edge[bend right=10] node[above, sloped] {$\beta,0.15$} (s1)
			(s0) edge[bend left=30] node[above, sloped] {$\gamma,0.4$} (s1)
			(s0) edge[bend left] node[below, sloped] {$\alpha,0.08$} (s2)
			(s0) edge node[below, sloped] {$\gamma,0.6$} (s2)
			(s0) edge[bend right] node[below, sloped] {$\beta,0.15$} (s2)
			(s0) edge[bend right=20] node[below, sloped] {$\alpha,0.6$} (s3)
			(s0) edge[bend right=60] node[below, sloped] {$\beta,0.7$} (s3)
			(s1) edge[loop right] node[right] {$\alpha,1$} (s1)
			(s2) edge[loop right] node[right] {$\alpha,1$} (s2)
			(s3) edge[loop right] node[right] {$\alpha,1$} (s3);
			\end{tikzpicture}}
		\label{fig:minisnotoptimalexample}
	}}
	\subfloat[]{\scalebox{0.24}{
		\centering
		\includegraphics[width=\textwidth]{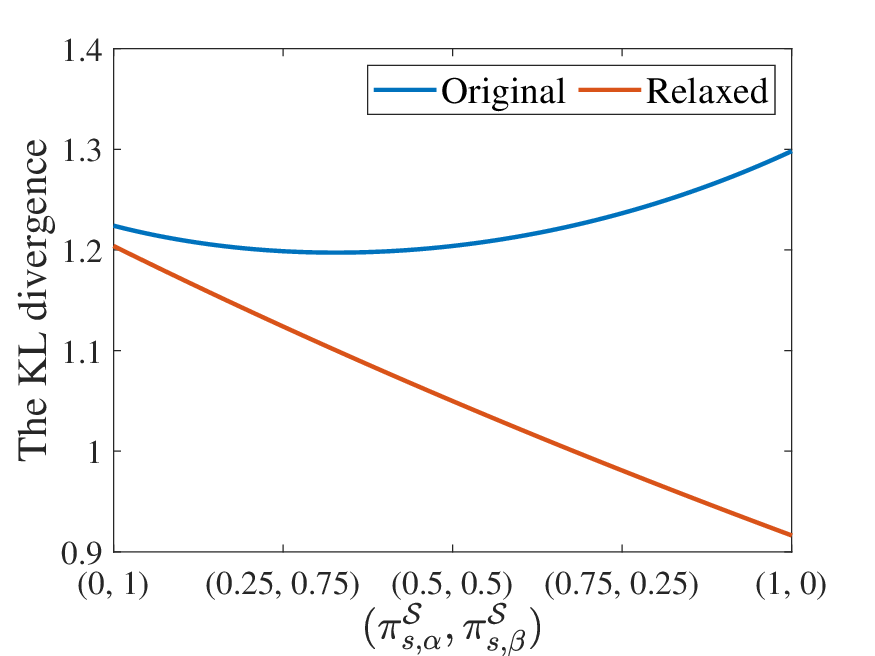}
		\label{fig:minisnotoptmalgraph}
	}}
	\caption{(a) An MDP with 4 states. A label $a,p$ of a transition refers to the transition that happens with probability $p$ when action $a$ is taken. (b) The KL divergence between the path distributions of the agent and the supervisor for different reference policies. Note that there are two local optima that maximizes the KL divergence.}
\end{figure}

We consider a new parametrization to reformulate the optimization problem given in \eqref{prog:stateprogramsupervisor}. Consider a continuous and bijective transformation from the occupation measures to the new parameters, that makes new parameters to span all stationary policies. After this transformation, an optimal solution to \eqref{prog:stateprogramsupervisor} yields an optimal solution in the new parameter space. If the optimization problem given in \eqref{prog:stateprogramsupervisor} has multiple local optima, then any reformulation spanning all stationary policies for the supervisor has multiple local optima. Therefore, it is not possible to obtain a convex reformulation.

		In Section \ref{subsection:hardness}, we show that Problem \ref{problem:mindeviation2} is a provably hard problem. In Section \ref{subsection:dual}, we describe dualization-based procedure to locally solve the optimization problem given in \eqref{prog:stateprogramsupervisor}. As an alternative to solving the dual problem, we give an algorithm based on alternating direction method of multipliers (ADMM) in Section \ref{subsection:ADMM}. Finally, we present a relaxation of the problem in Section \ref{subsection:relaxation} that relies on solving a linear program.

	\subsection{The Complexity of the Synthesis of Optimal Reference Policies}
	\label{subsection:hardness}
	In this section, we show that the synthesis of an optimal reference policy is NP-hard whereas the feasibility problem for reference policies can be solved in polynomial time.

	Finding a feasible policy under multiple reachability constraints has polynomial complexity in the number states and actions for a given MDP. When the target states are absorbing, the complexity of the problem is also polynomial in the number of constraints~\cite{etessami2007multi}. This result follows from that a feasible policy can be synthesized with a linear program where the numbers of variables and constraints are polynomial in the number of states, actions, and task constraints.
	
		 Matsui transformed the set partition problem to the decision version of an instance of linear multiplicative programming and proved the NP-hardness of linear multiplicative programming~\cite{matsui1996np}. In the proof of Proposition \ref{proposition:hardness}, we give an instance of Problem \ref{problem:mindeviation2} whose decision problem reduces into the decision problem of the instance of linear multiplicative programming that Matsui provided\protect{\footnote{The complete proof is available at the end of this document.}}.

	While a feasible reference policy can be synthesized in polynomial time, the complexity of finding an optimal reference policy is NP-hard even when the target states are absorbing. The hardness proof follows from a reduction of Problem \ref{problem:mindeviation2} to an instance of linear multiplicative programming which minimizes the multiplication of two variables subject to linear inequality constraints. Formally we have the following result.
	\begin{proposition} \label{proposition:hardness}
	Problem \ref{problem:mindeviation2} is NP-hard even under Assumption \ref{assumption:closednessanddisjointness}.
	\end{proposition}
	\begin{proof}[\textbf{Sketch of Proof for Proposition \ref{proposition:hardness}}]
		Problem \ref{problem:mindeviation2} can be reduced to linear multiplicative programming.  Linear multiplicative program can be reduced to the set partition problem. Since the set partition problem is NP-hard, Problem \ref{problem:mindeviation2} is NP-hard.
		
		In more detail, the set partition problem~\cite{garey1979computers,karp1972reducibility}  is NP-hard and is the following:
		
		\textbf{Instance: } An $m \times n$ $0-1$ matrix $M$ satisfying $n > m$.
		
		\textbf{Question: } Is there a $0-1$ vector $x$ satisfying 	$	 \sum_{\substack{j=1 \\ M_{ij} = 1}}^{n} x_{j} = 1$ for all $i \in [n]$.

		Linear multiplicative programming minimizes the product of two variables subject to linear inequality constraints and is NP-hard~\cite{matsui1996np} . Let $M$ be an $m \times n$ $0-1$ matrix with $n \geq m$ and $n \geq 5$, and $p = n^{n^{4}}$. The problem 
		
		\small
		\begin{subequations}
			
			\label{matsuiprogram}
			\begin{align}
			&\min \quad && (2p^{4n} - p + 2p^{2n} x_{0} + y_{0}) (2p^{4n} - p - 2p^{2n} x_{0} + y_{0}) \nonumber
			\\
			&\normalfont \text{subject to} && x_{0} = \sum_{i=1}^{n} p^{i} x_{i} \label{matsuiconsfirst}
			\\
			& && y_{0} = \sum_{i=1}^{n} \sum_{j=1}^{n} p^{i+j} y_{ij}
			\\
			& \forall i \in [n], && 0 \leq x_{i} \leq 1, y_{ii} = x_{i},
			\\
			& \forall i ,j \in [n] && 0 \leq y_{ij} \leq 1,  
			\\
			&  \substack{\forall i,j \in [n] \\ i\neq j}, && x_{i}  \geq y_{ij}, \quad x_{j}  \geq y_{ij} , \quad y_{ij}  \geq x_{i} + x_{j} - 1, 
			\\
			&\forall i \in [m], && \sum_{\substack{j=1 \\ M_{ij} = 1}}^{n} x_{j} = 1, \label{consM} 
			\end{align}
			\end{subequations}\normalsize
		where the decision variables are $x_{i}$ for all $i \in [n]$ and $y_{ij}$ for all $i,j \in [n]$, is NP-hard. In detail, \cite{matsui1996np} proved that the optimal value of \eqref{matsuiprogram} is less than or equal to $4p^{8n}$ if and only if there exists a $0-1$ solution for $x_{1}, \ldots, x_{n}$ satisfying \eqref{consM}. Since the decision problem of \eqref{matsuiprogram} correspond to solving the set partition problem, \eqref{matsuiprogram} is NP-hard. 
		
		We can construct an MDP with a size polynomial in $n$ and choose polynomial number of specifications in $n$ such that the optimal value of Problem \ref{problem:mindeviation2} is 
		\small
		\begin{subequations} \label{opt:finalverison}
			\begin{align*}
			&\max \frac{1}{2} \log\frac{1}{(2p^{4n} - p + 2p^{2n} x_{0} + y_{0}) (2p^{4n} - p - 2p^{2n} x_{0} + y_{0})} \nonumber
			\\
			& \quad \quad+ \frac{1}{2} \log\left( 4C^2(n^2 + n + 1)^2\right) 
			\\
			&\normalfont \text{subject to } \quad \eqref{matsuiconsfirst}-\eqref{consM}
			\end{align*}
		\end{subequations}
		\normalsize
		where $C$ is a constant depending on $n$. Due to the result given in \cite{matsui1996np}, the optimal value of \eqref{opt:finalverison} is greater than or equal to $-\log(4p^{8n})/2 + \log\left( 4C^2(n^2 + n + 1)^2\right)/2 $ if and only if there exists a $0-1$ solution for $x_{1}, \ldots, x_{n}$ satisfying \eqref{consM}. Since the decision problem of \eqref{opt:finalverison} correspond to solving the set partition problem, \eqref{opt:finalverison} is NP-hard.  
		
		Since the number of states, actions, and the task constraints is polynomial in $n$ and \eqref{opt:finalverison} synthesizes an optimal reference policy, the synthesis of optimal reference policies is NP-hard.

	\end{proof}

	We remark that the hardness of Problem \ref{problem:mindeviation2} is due to the nonconvexity of the KL objective function since the feasibility problem can be solved in polynomial time.

	\subsection{Dualization-based Approach for the Synthesis of Optimal Reference Policies} \label{subsection:dual}
	Observing that Slater's condition~\cite{boyd2004convex} is satisfied, and the strong duality holds for the optimization problem given in \eqref{prog:stateprogram}, to find the optimal value of \eqref{prog:stateprogramsupervisor} one may consider solving the dual of \eqref{prog:stateprogram} with $x^{\svisor}_{s,a}$ as additional decision variables and \eqref{cons:policyrestimesupervisor}-\eqref{cons:reachabilitysupervisor} as additional constraints. In this section, we describe the dualization-based approach for the synthesis of locally optimal reference policies.
	
		The optimization problem given in \eqref{prog:stateprogram} has the following conic optimization representation: 	\begin{subequations}
		\label{prog:conicstateprogram}
		\begin{align}
		\underset{y}{\min} \ &  c^{T} y
		\\
		\normalfont \text{subject to } \
		&  [G | -I] y = h,
		\\
		& y \in \mathcal{K}.
		\end{align}
	\end{subequations}
	
	We construct the parameters of the above optimization problem as follows. Define the variable $r_{(s,q)}$ for all $s \in \states_{d}$ and $q \in \successor(s)$. Let $r$ be the $M \times 1$ vector of $r_{(s,q)}$ variables where $r_{(s,q)}$ has the index $(s,q)$. The conic optimization problem has the objective function $\sum_{s \in \states_{d}} \sum_{q \in  \successor(s)} r_{(s,q)}$ and the constraint \begin{equation} \label{cons:exp} r_{(s,q)} \geq \sum_{a \in \actions(s)}  x^{\agent}_{s,a} \probs_{s,a,q} \log \left( \frac{\sum_{a' \in \actions(s)} x^{\agent}_{s,a'} \probs_{s,a',q}}{ \policy^{\svisor}_{s,q} \sum_{a' \in \actions(s)} x^{\agent}_{s,a'}} \right) \end{equation} for all $s \in \states_{d}$ and $q \in  \successor(s)$. The $N \times 1$ vector of $x^{\agent}_{s,a}$ variables is $x^{\agent}$ where $x^{\agent}_{s,a}$ has index $s,a$. Define $y = [x^{\agent},r]^T$. We encode constraint \eqref{cons:floweqn} with $G_{eq}y = h_{eq}$ where $G_{eq}$ is a $N \times (N+M)$ matrix with $(s, (q,a))$-th entry $\mathds{1}_{s}(q) - \probs_{q,a,s}$, and $s$-th entry of $h$ is $\mathds{1}_{s_0}(s).$ The constraint \eqref{cons:positiveactions} is encoded by $G_{+}y\geq 0$ where $G_{+}: = [I_{N \times N} | 0_{N \times M} ]$. The additional constraint given in \eqref{cons:exp} is encoded by $G_{(s,q)} y \in K_{\exp}$ where $G_{(s,q)}$ is a $3 \times (N+M)$ matrix with $(1,N+(s,q))$-th entry $-1$, $(2,(s,a))$-th entry $\probs_{s,a,q}$ for all $a \in \actions(s)$,  $(3,(s,a))$-th entry $\policy^{\svisor}_{s,q}$ for all $a \in \actions(s)$. The constraint \eqref{cons:reach1} is encoded by $G_{\agent} y \geq \nu^{\agent}$ where $G_{\agent}$ is a $1 \times (N+M)$ matrix where $(1,(s,a))$-th entry is $\mathds{1}_{\states_{d} \setminus \agentsset}(s) \sum_{q \in \agentsset} \probs_{s,a,q}$. Finally, $\mathcal{K} = \mathbb{R}^{N + M} \times \lbrace 0 \rbrace^{|\states_{d}|} \times \mathbb{R}^{N}_{+} \times \mathcal{K}_{\exp} \times \ldots \times \mathcal{K}_{\exp} \times \mathbb{R}_{+} $, $G = [ G_{eq},G_{+},G_{(1,1)}, \ldots, 	G_{(|\states_{d}|,|\states|)},G_{\agent} ]^{T}$, $h = [	h_{eq} , 0 ,	\ldots,	0 ,\nu^{\agent}]$, and $ c = [ 0_{N \times 1},	1_{M \times 1}]$.

	The dual of the optimization problem in \eqref{prog:conicstateprogram} is 
	\begin{subequations}
		\label{prog:dualconicprogram}
	\begin{align}
	\underset{u,w}{\max} \ &  h^{T} u \label{eq:dualobj}
	\\
	\normalfont \text{subject to} \ & \begin{bmatrix}
	G^T\\ 
	-I^T
	\end{bmatrix} u + w = c,
	\\
	&w \in \mathcal{K}^{*},
	\end{align}
\end{subequations}
where the decision variables are $u$ and $w$, and $\mathcal{K}^{*} = \lbrace 0\rbrace ^{N+M} \times \mathbb{R}^{|\states_{d}|} \times \mathbb{R}^{N}_{+} \times \mathcal{K}^{*}_{\exp} \times \ldots \times \mathcal{K}^{*}_{\exp} \times \mathbb{R}_{+}$. 

By combining the optimization problem in \eqref{prog:dualconicprogram} and the constraints in \eqref{cons:policyrestimesupervisor}-\eqref{cons:reachabilitysupervisor}, and adding $x^{\svisor}_{s,a}$ as decision variables, we get an optimization problem that shares the same optimal value with \eqref{prog:stateprogramsupervisor}. However, we remark that this problem is nonconvex because of the constraint \eqref{cons:policyrestimesupervisor} and the bilinear constraints that are due to $\pi^{\svisor}_{s,q}$ parameter introduced in the construction of $G_{(s,q)}$. 

\subsection{Alternating Direction Method of Multipliers (ADMM)-based Approach for the Synthesis of Optimal Reference Policies} \label{subsection:ADMM}
The alternating direction method of multipliers (ADMM)~\cite{boyd2011distributed} is an algorithm to solve decomposable optimization problems by solving smaller pieces of the problem. We use the ADMM to locally solve the optimization problem given in \eqref{prog:stateprogramsupervisor}. The objective function of \eqref{prog:stateprogramsupervisor} is decomposable since it is a sum across $\states_{d}$ where each summand consists of different variables. We exploit this feature to reduce the problem size via the ADMM. 

For every state $s \in S_{d}$, we introduce $z^{\agent}_{s}$ and $z^{\svisor}_{s}$ such that $z^{\agent}_{s} = x^{\agent}_{s}$ and $z^{\svisor}_{s} = x^{\svisor}_{s}$. With these extra variables, the augmented Lagrangian of \eqref{prog:stateprogramsupervisor} is 

	\begin{align*}
	&L(x^{\svisor}, x^{\agent}, z^{\svisor}, z^{\agent}, \lambda^{\svisor}, \lambda^{\agent}) 
	\\
	&= \sum_{s \in \states_{d} } \sum_{a \in \actions(s)} \sum_{q \in \states} \left( 
	x^{\agent}_{s,a} \probs_{s,a,q} \log \left( \frac{\sum_{a' \in \actions(s)} x^{\agent}_{s,a'} \probs_{s,a',q}}{ \policy^{\svisor}_{s,q} \sum_{a' \in \actions(s)} x^{\agent}_{s,a'}} \right) \right. \\
	&- \mathcal{I}_{\mathbb{R}^{|\actions(s)|}_{\geq 0}} (x^{\svisor}_{s}) + \mathcal{I}_{\mathbb{R}^{|\actions(s)|}_{\geq 0}} (x^{\agent}_{s})  - \rho^{\svisor}(x^{\svisor}_{s} - z^{\svisor}_{s})^{T}\lambda^{\svisor}_{s} 
	\\
	& + \rho^{\agent}(x^{\agent}_{s} - z^{\agent}_{s})^{T}\lambda^{\agent}_{s}  -  \frac{\rho^{\svisor}}{2} \| x^{\svisor}_{s} - z^{\svisor}_{s} \|^{2}_{2} \left.   +\frac{\rho^{\agent}}{2} \|  x^{\agent}_{s} - z^{\agent}_{s} \|^{2}_{2} \right)   
	\\
	&- \mathcal{I}_{X^{\svisor}}(z^{\svisor}) + \mathcal{I}_{X^{\agent}}(z^{\agent}),
	\end{align*}

where $\rho^{\svisor}$ and $\rho^{\agent}$ are positive constants, $\lambda^{\svisor}$ and $\lambda^{\agent}$ are the dual parameters, $X^{\agent}$ is the set of occupation measures of the agent that satisfy \eqref{cons:floweqn} and \eqref{cons:reach1}, $X^{\svisor}$ is the set of occupation measures of the supervisor that satisfy \eqref{cons:floweqnsupervisor} and \eqref{cons:reachabilitysupervisor}, and  $\policy^{\svisor}_{s,q} = \sum_{a \in \actions(s)} \probs_{s,a,q} {x^{\agent}_{s,a}}/({\sum_{a' \in \actions(s)} x^{\agent}_{s,a'}})$ for all $s \in \states_{d}$ and $a \in \actions(s)$. In Algorithm \ref{algo:admm} which is a modified version of the classical ADMM, we give the ADMM for the synthesis of reference policies. Note that we optimize $x^{\svisor}$ and $x^\agent$ together to capture the characteristics of the maximin problem. 

	\begin{algorithm} 
	\caption{The ADMM for the synthesis of reference policies} \label{algo:admm}
	\begin{algorithmic}[1]
		\State \textbf{Input:} An MDP $\mdp$, reachability specifications $\lozenge R^{\svisor}_{i}$ for all $i \in \svisornumofspec$ and $\agentspec$, probability thresholds $\svisorthri$ for all $i \in [\svisornumofspec]$ and $\agentthr$.
		\State \textbf{Output:} A reference policy $\policy^{\svisor}$.
		\State Set $x^{\svisor, 0}$ and $z^{\svisor, 0}$ arbitrarily from $X^{\svisor}$. 
		\State Set $x^{\agent, 0}$ and $z^{\agent, 0}$ arbitrarily from $X^{\agent}$. 
		\State Set $\lambda^{\svisor, 0}$ and $\lambda^{\agent, 0}$ to 0. 
		\State $k = 0$.
		\While{stopping criteria are not satisfied}
		\State Set $x^{\svisor, k+1}$ and $x^{\agent, k+1}$ as the solution of $\max_{x^{\svisor}} \min_{x^{\agent}} L(x^{\svisor}, x^{\agent}, z^{\svisor, k}, z^{\agent, k}, \lambda^{\svisor , k}, \lambda^{\svisor, k})$. \label{step:optimizemaxmin}
		\State $z^{\svisor, {k+1}} := Proj_{X^{\svisor}}(x^{\svisor, k+1} + \lambda^{\svisor, k})$.
		\State $z^{\agent, k+1} := Proj_{X^{\agent}}(x^{\agent, k+1} + \lambda^{\agent, k})$.
		\State $\lambda^{\svisor, k+1} := \lambda^{\svisor, k} + x^{\svisor, k+1} - z^{\svisor, k+1}$.
		\State $\lambda^{\agent, k+1} := \lambda^{\agent, k} + x^{\agent, k+1} - z^{\agent, k+1}$.
		\State $k:=k+1$.
		\EndWhile 
		\State Compute $\policy^{\svisor}$ using $z^{\svisor, {k}}$ as the occupation measures.
	\end{algorithmic}
\end{algorithm}

We remark that Algorithm \ref{algo:admm} still requires solving a maximin optimization problem (see line \ref{step:optimizemaxmin}). However, the maximin optimization problem in Algorithm \ref{algo:admm} can be solved as a local maximin problem separately for each state since $x^{\svisor}_{s}$ and $x^{\agent}_{s}$ are decoupled from $x^{\svisor}_{q}$ and $x^{\agent}_{q}$ for all $s \neq q \in S_{d}$.  While the number of variables for the problem obtained via dualization-based approach is $\mathcal{O}(|S||A|)$, it is $\mathcal{O}(|A|)$ for the local problems in the ADMM algorithm. 

Since the strong duality holds, one can use a dualization-based approach as shown in Section \ref{subsection:dual} to solve the local maximin problems. We remark that after dualization, the resulting optimization problems are nonconvex similar to the optimization problem obtained via dualization-based approach. 

\begin{remark}
	Convergence of ADMM for particular nonconvex optimization problems has been studied~\cite{wang2015global, hong2016convergence}. To the best of our knowledge, the method based on the ADMM for the optimization problem given in \eqref{prog:stateprogramsupervisor} has no convergence guarantees and does not match with the any of the existing convergence results.
\end{remark}

\subsection{A Linear Programming Relaxation for the Synthesis of Reference Policies} \label{subsection:relaxation}

Since it is not possible to obtain a convex reformulation of the optimization problem given in \eqref{prog:stateprogramsupervisor} via a transformation, we give a convex relaxation of the problem. Intuitively, synthesizing a policy that minimizes the probability of satisfying the agent's specification is a good way to increase the KL divergence between the distributions of paths. Formally, consider a transformation of the path distributions that groups paths of $\mdp$ into two subsets: the paths that satisfy $\agentspec$ and the paths that do not satisfy $\agentspec$. After this transformation, the probability assigned to the first subset is $\Pr^{\policy^{\svisor}}_{\mdp}(\initialstate \models \agentspec)$ under policy $\policy^{\svisor}$ and $\Pr^{\policy^{\agent}}_{\mdp}(\initialstate \models \agentspec)$ under policy $\agentspec$. By the data processing inequality given in \eqref{ineq:dataprocessing}, this transformation yields a lower bound on the KL divergence between the path distributions: $KL\left(\Gamma^{\policy^{\agent}}_{\mdp} || \Gamma^{\policy^{\svisor}}_{\mdp} \right)$ is greater than or equal to 
\begin{equation} \label{pathsvssatisfaction}
\small{KL \left( Ber\left({\Pr}^{\policy^{\agent}}_{\mdp}\left(\initialstate \models \agentspec\right)\right) \big|\big| Ber\left({\Pr}^{\policy^{\svisor}}_{\mdp}\left(\initialstate \models \agentspec\right)\right) \right).}
\end{equation}

	We use this lower bound to construct the relaxed problem \begin{subequations}
		\label{problem:mindeviationrelaxed}
		\begin{align}
		\underset{ \policy^{\svisor} \in \Policy(\mdp)}{\sup} \ \underset{ \policy^{\agent} \in \Policy(\mdp)}{\inf} \quad
		&   \eqref{pathsvssatisfaction}
		\\
		\text{\normalfont subject to} \quad
		& {\Pr }^{\policy^{\agent}}_{\mdp}(\initialstate \models \agentspec) \geq \agentthr, \label{cons:agenttask2}
		\\
		& {\Pr} ^{ \policy^{\svisor}}_{\mdp}( \initialstate \models \svisorspeci) \geq \svisorthri, \ i \in [\svisornumofspec]. \label{cons:supervisorstask2}
		\end{align}
	\end{subequations}
	
	If $\Pr^{\policy^{\svisor}}_{\mdp}(\initialstate \models \agentspec) \geq \agentthr$, the agent may directly use the reference policy. Without loss of generality, assuming that $\Pr^{\policy^{\svisor}}_{\mdp}(\initialstate \models \agentspec) < \agentthr$, the objective function of above optimization problem is decreasing in  $\Pr^{\policy^{\svisor}}_{\mdp}(\initialstate \models \agentspec)$ and increasing in $\Pr^{\policy^{\agent}}_{\mdp}(\initialstate \models \agentspec)$. Hence, the problem  \begin{subequations}
		\label{problem:mindeviationmorerelaxed}
		\begin{align}
		\underset{ \policy^{\svisor} \in \Policy(\mdp)}{\sup} \ \underset{ \policy^{\agent} \in \Policy(\mdp)}{\inf} \quad
		&   {\Pr} ^{ \policy^{\agent}}_{\mdp}( \initialstate \models \agentspec) - {\Pr} ^{ \policy^{\svisor}}_{\mdp}( \initialstate \models \agentspec) \label{eq:pathprobmininf2}
		\\
		\text{\normalfont subject to} \quad
& {\Pr }^{\policy^{\agent}}_{\mdp}(\initialstate \models \agentspec) \geq \agentthr, \label{cons:agenttask2}
\\
& {\Pr} ^{ \policy^{\svisor}}_{\mdp}( \initialstate \models \svisorspeci) \geq \svisorthri, \ i \in [\svisornumofspec]. \label{cons:supervisorstask2}
		\end{align}
	\end{subequations}
	shares the same optimal policies with the problem given in \eqref{problem:mindeviationrelaxed}. We note that the optimization problem given in \eqref{problem:mindeviationmorerelaxed} can be solved separately for the supervisor's and the agent's parameters where both of the problems are linear optimization problems. The optimal reference policy for the relaxed problem is the policy that minimizes $\Pr^{\policy^{\svisor}}_{\mdp}(s_0 \models \agentspec)$ subject to $\Pr^{\policy^{\svisor}}_{\mdp}(s_0 \models \svisorspeci) \geq \svisorthri$ for all $i \in [\svisornumofspec]$.
	
	The lower bound given in \eqref{pathsvssatisfaction} provides a sufficient condition on the optimality of a reference policy for Problem \ref{problem:mindeviation2}. A policy $\policy^{\svisor}$ satisfying $\Pr^{\policy^{\svisor}}_{\mdp}(s_0 \models \agentspec) = 0$ and $\Pr^{\policy^{\svisor}}_{\mdp}(s_0 \models \svisorspeci) \geq \svisorthri$ for all $i \in [\svisornumofspec]$ is an optimal reference policy since the optimization problem given in \eqref{problem:mindeviationrelaxed} has the optimal value of $\infty$. However, in general the gap due to the relaxation may get arbitrarily large, and the reference policy synthesized via \eqref{problem:mindeviationrelaxed} is not necessarily optimal for Problem \ref{problem:mindeviation2}. For example, consider  the MDP given in Figure \ref{fig:minisnotoptimalexample} where the agent's policy again has $\policy^{\agent}_{s, \gamma} = 1$. For simplicity, there is no specification for the supervisor, i.e., $\nu^{\svisor}$ is $0$. The policy $\policy^{\svisor}$ that minimizes $\Pr^{\policy^{\svisor}}_{\mdp}(s \models \lozenge q_1 \vee \lozenge q_2)$ chooses action $\beta$ at state $s$. This policy has a KL divergence value of $1.22$. On the other hand, a policy that chooses action $\alpha$ is optimal and it has a KL divergence value of $1.30$ even though it does not minimize the probability of satisfying $\lozenge q_1 \vee \lozenge q_2$. The gap of the lower bound may get arbitrarily large as $\probs_{s,\alpha,q_{2}}$ decreases. Furthermore, the policy synthesized via the relaxed problem may not even be locally optimal as $\probs_{s,\alpha,q_{2}}$ decreases.

The relaxed problem focuses on only one event, achieving the malicious objective, and fails to capture all transitions of the agent. On the other hand, the objective function of Problem \ref{problem:mindeviation2}, the KL divergence between the path distributions, captures all transitions of the agent rather than a single event. In particular, to detect the deviations the optimal deceptive policy assigns a low probability to the transition from $s$ to $q_{2}$ which inevitably happens with high probability for the agent. However, the policy synthesized via the relaxed problem fails to capture that the agent have to assign high probability to the transition from $s$ to $q_{2}$.

\section{Numerical Examples} \label{section:examples}
In this section we give numerical examples on the synthesis of optimal deceptive policies and optimal reference policies. In Section~\ref{subsection:chcofdeceptive} we explain some characteristics of the optimal deceptive policies through different scenarios. In the second example given in Section~\ref{subsection:whyoursisthebest}, we compare the proposed metric, the KL divergence between the distributions of paths, to some other metrics. We demonstrate the ADMM-based algorithm with the example given in Section~\ref{subsection:admmexample}.

We solved the convex optimization problems with CVX \cite{cvx} toolbox using MOSEK \cite{mosek} and the nonconvex optimization problems using IPOPT \cite{wachter2006implementation}.

\subsection{Some Characteristics of Deceptive Policies} \label{subsection:chcofdeceptive}
The first example demonstrates some of the characteristics of the optimal deceptive policies. The environment is a $20 \times 20$ grid world given in Figure \ref{fig:example1}. The green and red states are denoted with sets $g$ and $r$, respectively. At every state, there are $4$ available actions, namely, up, down, left, and right. When the agent takes an action the transition happens into the target direction with probability $0.7$ and in the other directions uniformly randomly with probability $0.3$. If a direction is out of the grid, the transition probability of that direction is proportionally distributed to the other directions. The green and red states are absorbing. The initial state is the top-left state. 
	\begin{figure} [h] 
	\centering
	\subfloat[Reference]{\scalebox{0.2}{
		\centering
		\includegraphics[width=\textwidth]{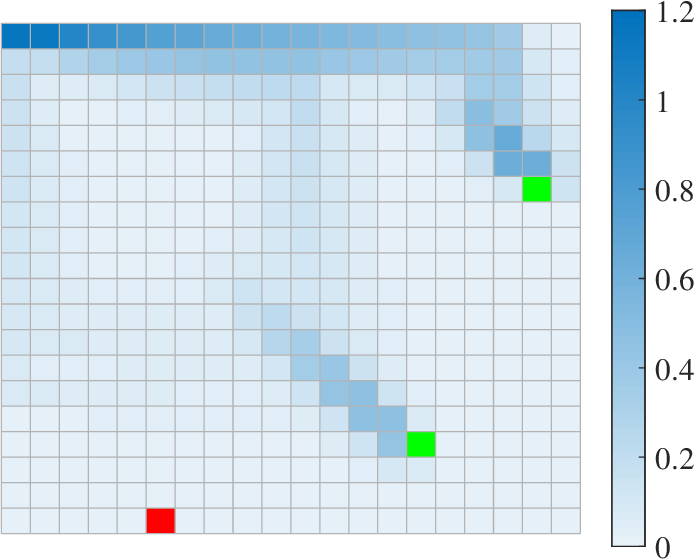}
		\label{fig:example1suppolicy}
	}}
	\subfloat[Deceptive]{\scalebox{0.2}{
		\centering
		\includegraphics[width=\textwidth]{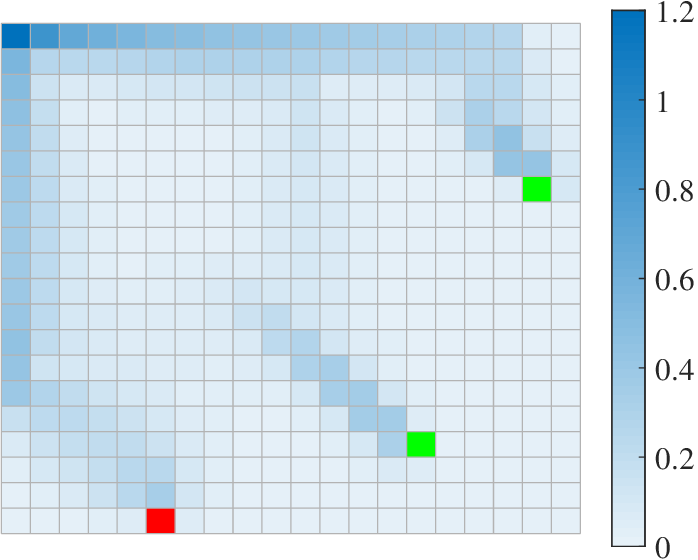}
		\label{fig:example1agentpolicy}
	}}
	\caption{Heat maps of the occupation measures. The value of a state is the expected number of visits to the state. The deceptive policy makes the agent move towards the red state to achieve the malicious objective.}
	\label{fig:example1}
\end{figure}

The specification of the supervisor is to reach each of the green states with probability at least $0.4$. The reference policy is constructed so that it reaches the green states with probability at least $1-10^{-4}$ in the minimum expected time. The specification of the agent is to reach the red state. The specification is encoded with $\lozenge r$. The probability threshold $\agentthr$ for the agent's specification is $0.3$.  We synthesize the policy of the agent according to Problem \ref{problem:mindeviation}, which leads to the KL divergence value of $2.662$. While the reference policy satisfies $\lozenge r$ with probability $10^{-4}$, the agent's policy satisfies $\lozenge r$ with probability $0.3$. 
 
 In Figure \ref{fig:example1agentpolicy}, we observe that if the deceptive agent is close to the green states, it does not try to reach the red state since deviations from the reference policy in these regions incur high divergence. Instead, as we see in Figure \ref{fig:example1policies}, the deceptive policy makes the agent move towards left in the first steps and reach the red state by going down. The misleading occurs during this period: while the agent goes left on purpose, it may hold the stochasticity of the environment accountable for this behavior. We also observe a significant detail in the agent's deceptive policy. The deceptive policy aims to reach the left border since the reference policy takes action down in this region. The agent wants to drive himself to this region to directly follow the reference policy without any divergence. Thus the agent deviates from the reference policy at a particular state to be close to the reference policy as much as possible in the rest of the path. Once the agent is close to the red state, it again deviates from the reference policy and takes action down with a high probability to reach the red state. 
 \begin{figure} [H]
 	\centering
	\subfloat[Supervisor - Left]{\scalebox{0.2}{
 		\centering
 		\includegraphics[width=\textwidth]{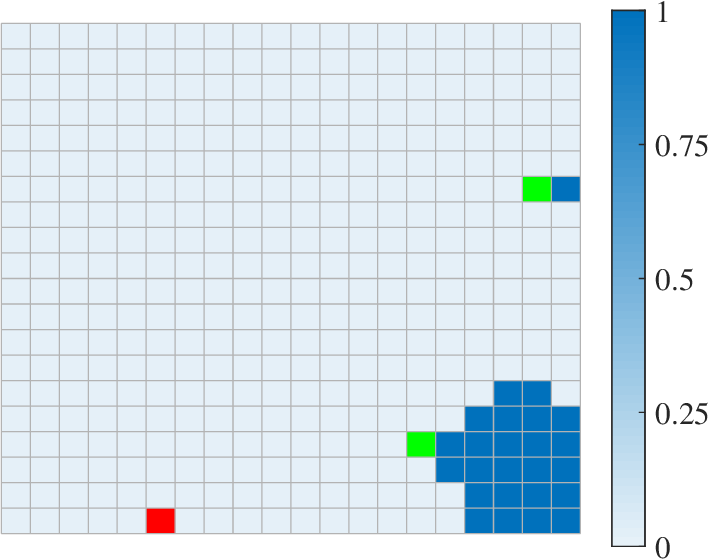}
 		\label{fig:supA1Right}
 	}}
 	\subfloat[Agent - Left]{\scalebox{0.2}{
 		\centering
 		\includegraphics[width=\textwidth]{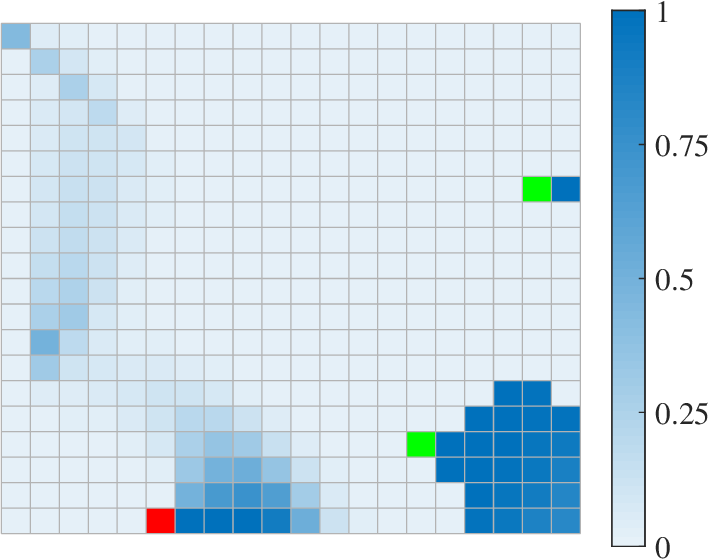}
 		\label{fig:agentA1Right}
 	}}
 
 	\subfloat[Supervisor - Down]{\scalebox{0.2}{
 	\centering
 	\includegraphics[width=\textwidth]{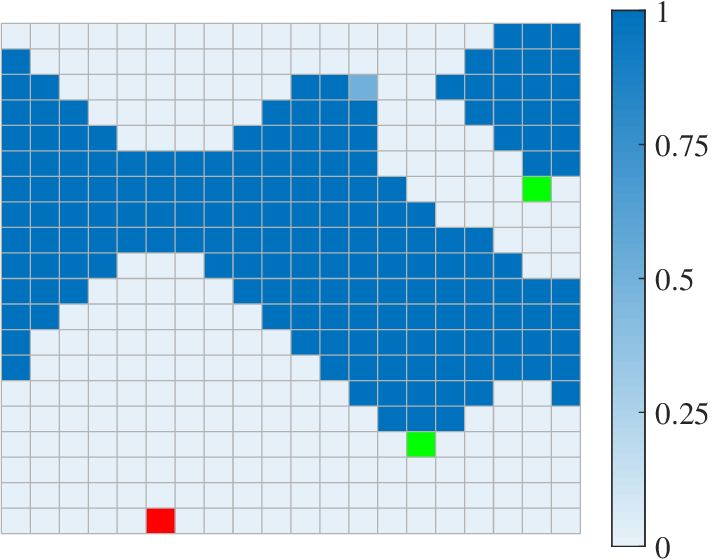}
 	\label{fig:supA1Down}
	}}
 	\subfloat[Agent - Down]{\scalebox{0.2}{
 	\centering
 	\includegraphics[width=\textwidth]{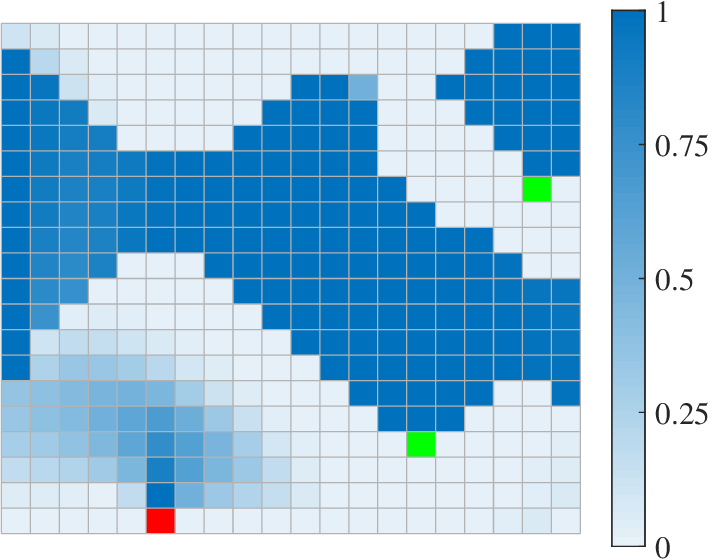}
 	\label{fig:agentA1Down}
	}}
 	\caption{The assigned probabilities to the actions when the yellow state was visited, but the red state was not visited.}
 	\label{fig:example1policies}
 \end{figure}

We note that the reference policy is restrictive in this case; as can be seen in Figure \ref{fig:example1suppolicy}, it follows almost a deterministic path. Under such a reference policy, even the policy that is synthesized via Problem \ref{problem:mindeviation} is easy to detect. To observe the effect of the reference policy on the deceptive policy, we consider a different reference policy as shown in Figure \ref{fig:example1entropysuppolicy}, which satisfies $\lozenge r$ with probability $10^{-3}$. When the reference policy is not as restrictive, the deceptive policy becomes hard to detect. Formally, the value of the KL divergence reduces to $1.462$. 
	
\begin{figure} [h]
	\centering
	\subfloat[Supervisor]{\scalebox{0.2}{
		\centering
		\includegraphics[width=\textwidth]{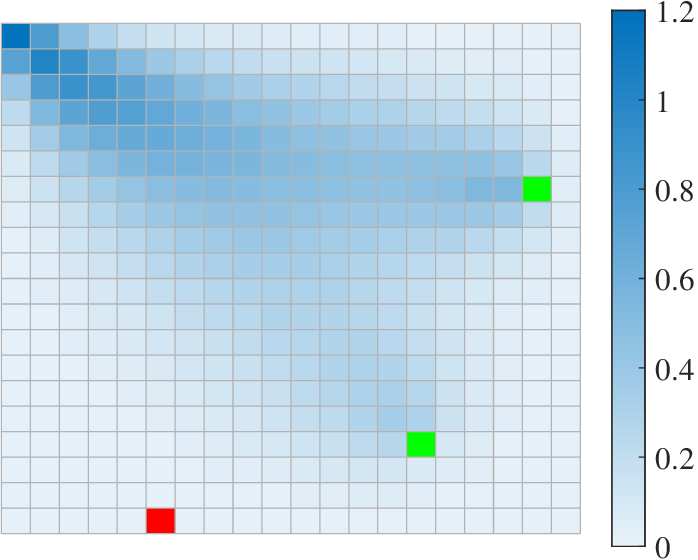}
		\label{fig:example1entropysuppolicy}
	}}
	\subfloat[Agent]{\scalebox{0.2}{
		\centering
		\includegraphics[width=\textwidth]{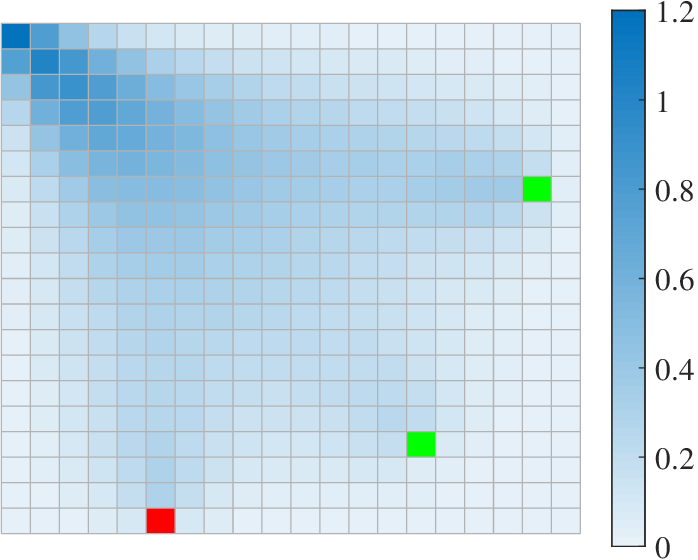}
	}}
	\label{fig:example1entropy}
	\caption{Heatmaps of the occupation measures. The deceptive policy is hard to detect under a reference policy that is not restrictive.}
\end{figure}

\subsection{Detection of a Deceptive Agent} \label{subsection:whyoursisthebest} 
In this example, by comparing KL divergence with some common metrics to synthesize the deceptive policies, we show how the choice of KL divergence helps with preventing detection. We compare the metrics using a randomly generated MDP and an MDP modeling a region from San Francisco.

The randomly generated MDP consists of $21$ states. In particular, there are $20$ transient states with $4$ actions and an absorbing state with $1$ action. For the transient states, each action has a successor state that is chosen uniformly randomly among the transient states. In addition to these actions, every transient state has an action that has the absorbing state as the successor state. At every transient state, the reference policy goes to the absorbing state with probability $0.15$ and the other successor states with probability $0.85$. The agent's specification $\specification^{\agent}$ is to reach to a specific transient state.

We randomly generate a reference policy for the randomly generated MDP. The reference policy satisfies the agent's specification with probability $0.30$. For the reference policy, we synthesize three candidate policies for deception: by minimizing the KL divergence between the path distributions of the agent's policy and the reference policies, by minimizing the $L_{1}$-norm between the occupation measures of the state-action pairs for the agent's policy and the reference policies, and by minimizing the $L_{2}$-norm between the occupation measures of the state-action pairs for the agent's policy and the reference policies. The candidate policies are constructed so that they satisfy the agent's specification $\specification^{\agent}$ with probability $0.9$. For each candidate policy, we run $100$ simulations each of which consists of $100$ independently sampled paths. 

We also simulate the agent's trajectories under the reference policies. In particular, we aim to observe the case where the empirical probability of satisfying $\specification^{\agent}$ is approximately $0.9$. Note that this is a rare event under the reference policy. We simulate this rare event in the following way. Let $\Gamma^{\policy^{\svisor}}_{\mdp}$ be the probability distribution of paths under the reference policy. We create two conditional probability distributions $\Gamma^{\policy^{\svisor}}_{\mdp,+}$ and $\Gamma^{\policy^{\svisor}}_{\mdp,-}$ which are the distribution of paths under the reference policy given that the paths satisfy $\specification^{\agent}$ and do not satisfy $\specification^{\agent}$, respectively. We sample from $\Gamma^{\policy^{\svisor}}_{\mdp,-}$ with probability $0.9$ and $\Gamma^{\policy^{\svisor}}_{\mdp,-}$ with probability $0.1$. 

\begin{figure}[h]
	\centering
	\includegraphics[width=0.2\paperwidth]{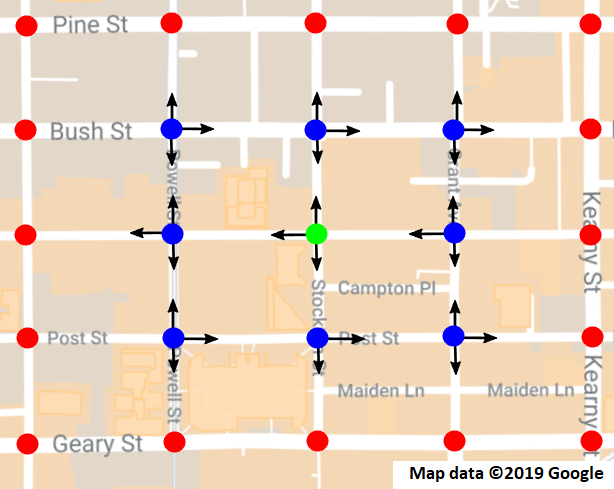}
	\caption{ The map of a region from north east of San Francisco. The green dot indicates the intersection at which the highest number of crimes happened. The data is from \cite{alamdari2014persistent}. The dots on the map represent the states of the MDP and the arrows represent the available actions. The initial state is chosen uniformly randomly among the blue states and the red states are absorbing. The agent aims to patrol the green state.}
	\label{fig:SFmap}
\end{figure}

In addition to the randomly generated MDP, we use a different MDP to show that the deceptive policy can help patrolling without being detected. The MDP models a region in the north east of San Francisco. The map of the region is given in Figure \ref{fig:SFmap} where each intersection is represented with a state and each road is represented with an action. We design the reference policy to represent the average driver behavior. We obtain the traffic density data from Google Maps~\cite{googlemaps} and synthesize the reference policy by fitting a stationary policy to the data. The aim of the agent is to patrol the intersection at which the highest number of crimes happens. Formally, the agent's policy reaches the intersection with probability at least $0.9$ whereas the reference policy reaches the intersection with probability $0.28$. For the simulation, we use the steps as in the randomly generated MDP.

For each simulation, we plot the log-probability under the reference policy and the log-likelihood ratio compared to the reference policy in Figure \ref{fig:loglgraph}. In Figures \ref{fig:loglrandom} and \ref{fig:loglsf}, we observe that the metrics based on $L_1$ and $L_2$ norms generate paths that are more probable under the reference policy compared to KL divergence metric. However, we also observe that the probabilities for the KL divergence metric looks closer to the probabilities of the simulations of the reference policy that are satisfying $\specification^{\agent}$ with high probability. The paths generated via the KL divergence coincides with the typical set of this rare event. In other words, the other metrics provide paths that have higher probabilities under the reference policy. However, given that the empirical distribution of the paths satisfy $\specification^{\agent}$ with high probability, the likelihood of the path generated by the KL divergence is higher and the supervisor expects to see paths that are close to the paths generated by the KL divergence metric. This phenomenon confirms the motivation provided by Sanov's theorem which is explained in Section~\ref{section:problemstatement}.  In Figures \ref{fig:llrtrandom} and \ref{fig:llrtsf}, we observe that the paths generated under the KL divergence metric has a lower log-likelihood ratio compared to the other metrics as explained in Section \ref{section:problemstatement}. This result shows that compared to the other candidate deceptive policies, the deceptive policy generated with the KL divergence is the least likely to be detected under the likelihood-ratio test.

 \begin{figure} [h]
	\centering
	\subfloat[Random MDP]{\scalebox{1}{
		\centering
\includegraphics[width=0.2\paperwidth]{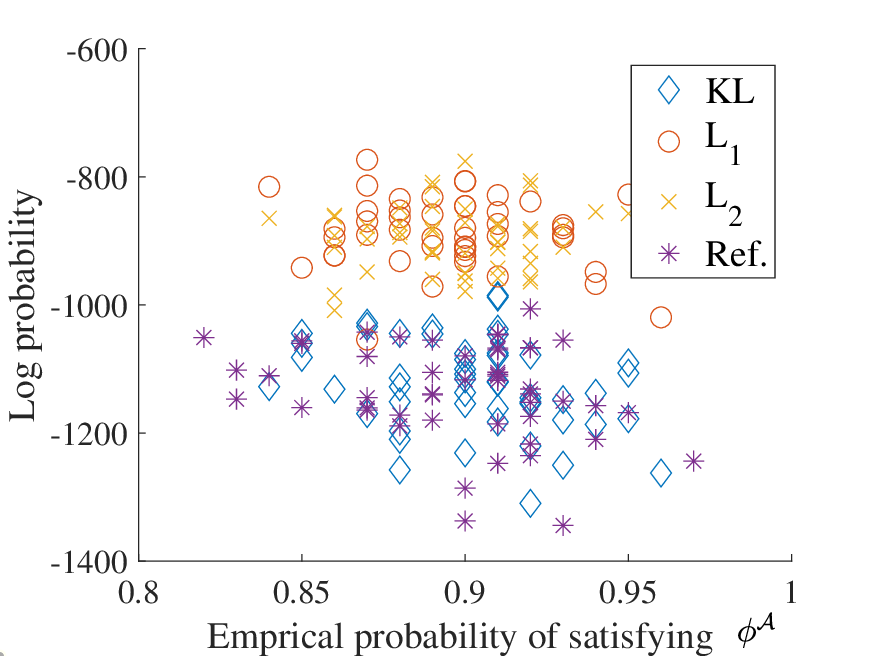}
		\label{fig:loglrandom}
	}}
	\subfloat[MDP for San Francisco]{\scalebox{1}{
	\centering
\includegraphics[width=0.2\paperwidth]{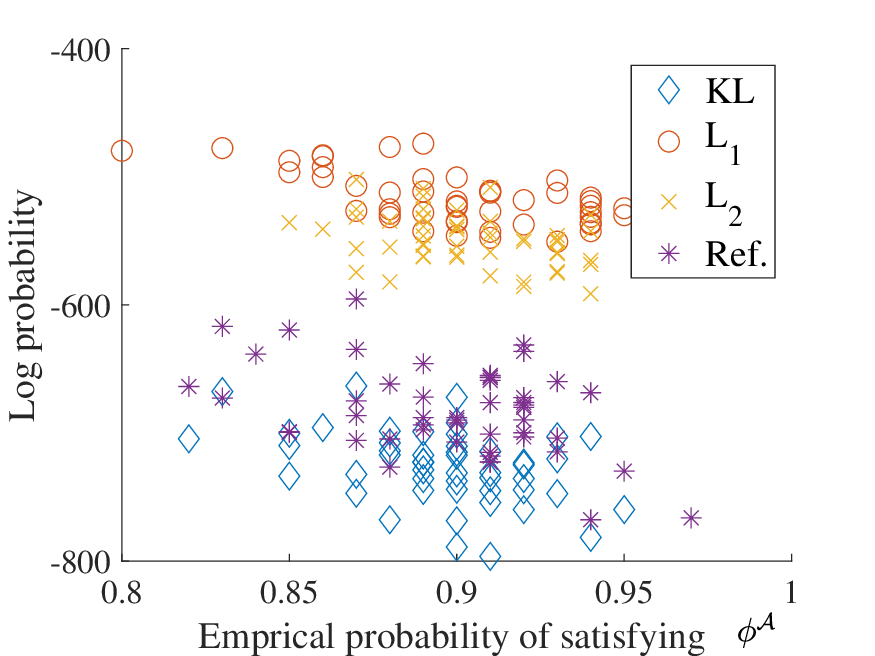}
		\label{fig:loglsf}
	}}

	\subfloat[Random MDP]{\scalebox{1}{
	\centering
\includegraphics[width=0.2\paperwidth]{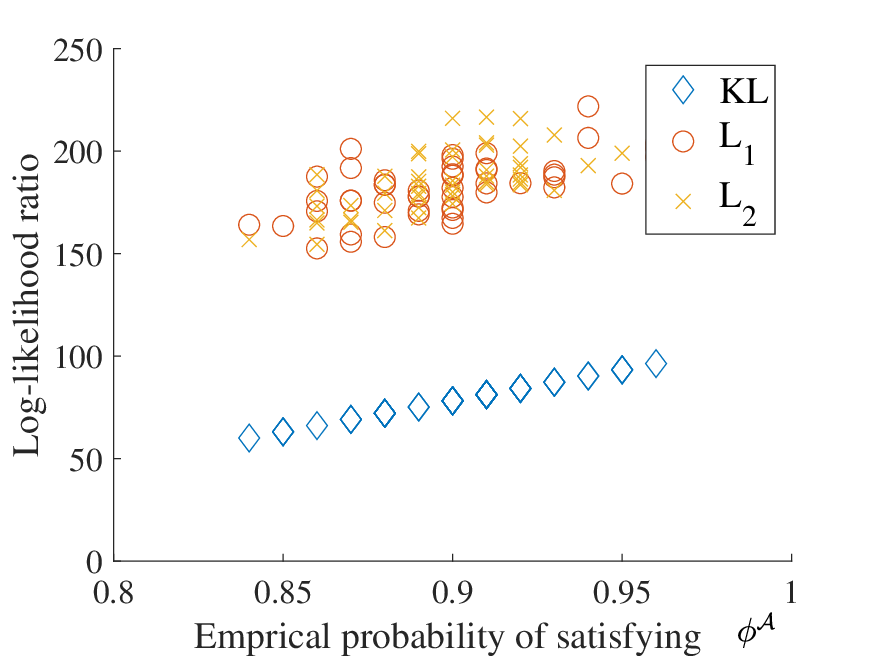}
		\label{fig:llrtrandom}
	}}
	\subfloat[MDP for San Francisco]{\scalebox{1}{
	\centering
\includegraphics[width=0.2\paperwidth]{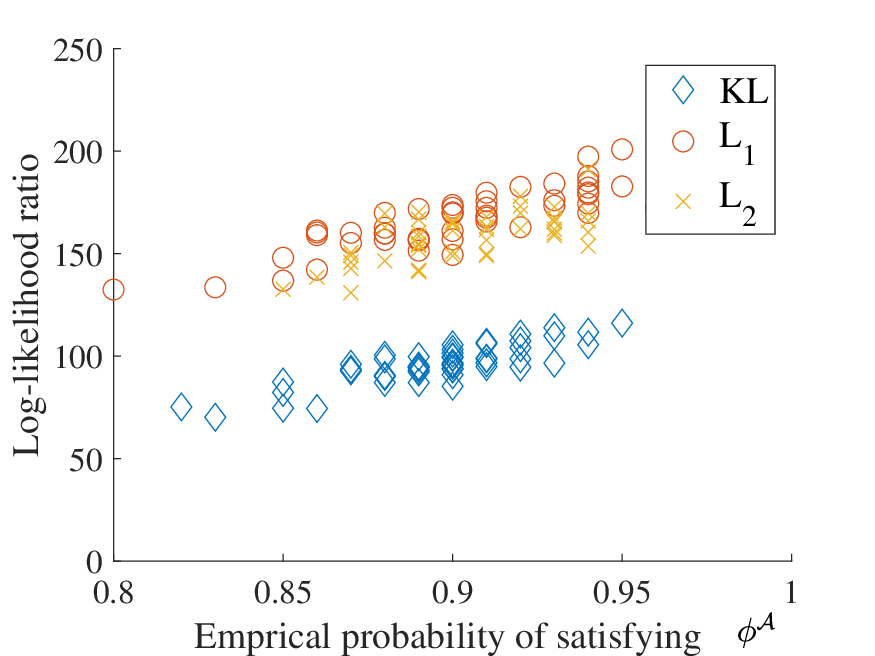}
		\label{fig:llrtsf}
	}}
	\caption{(a)-(b) The log-likelihoods under the reference policy. `Ref.' refers to the rare events of the reference policy that satisfies $\mathit{\specification^{\agent}}$ with high probability. `KL', `L$_{1}$', and `L$_{2}$' refer to the candidate deceptive policies. (c)-(d) The log-likelihood ratios compared with the reference policy. }
\label{fig:loglgraph}
\end{figure}

\subsection{Optimal Reference Policies} \label{subsection:admmexample}
We present an example of synthesis of optimal reference policies. The environment is a $4 \times 4$ grid world given in Figure \ref{fig:ADMMheatmaps} and is similar to the environment described in the example for the characteristics of deceptive policies. The green and red states are denoted with sets $g$ and $r$, respectively. At every state, there are $4$ available actions, namely, up, down, left, and right, at every state. When the agent takes an action the transition happens into the target direction with probability $0.7$ and in the other directions uniformly randomly with probability $0.3$. If a direction is out of the grid the transition probability to that direction is proportionally distributed to the other directions. The green state is absorbing and the initial state is the top-left state. 

The specification of the supervisor is to reach the green state, i.e., $ \lozenge g$. Note that the specification of the supervisor is satisfied with probability $1$ under any policy. The specification of the agent is to reach one of the red states, i.e., $\lozenge r$. The probability threshold for the agent's task is $0.3$. 

 We synthesize the reference policy via Algorithm \ref{algo:admm} given in Section \ref{subsection:ADMM}. In Algorithm \ref{algo:admm}, $z^{\svisor,k}$ represents the reference policy synthesized at iteration $k$. Similarly, $z^{\agent,k}$ represents the deceptive policy synthesized at iteration $k$. We plot the values of the KL divergences between these policies in Figure \ref{fig:ADMMgraph} and give the heatmaps for the occupation measures in Figure \ref{fig:ADMMheatmaps}. After few tens of iterations of the ADMM algorithm, the KL divergence value is near to the limit value which is $0.150$.  
 
 In Figure \ref{fig:ADMMgraph}, we also note that if the actual KL divergence value increases suddenly, the best response KL divergence value decreases. The reference policy tries to exploit suboptimal deceptive policies. While this exploitation increases the actual value, it causes suboptimality for the reference policy against the best deceptive policy.

 \begin{figure} [h]
	\centering
	\subfloat[]{\scalebox{1}{
			\centering
		\includegraphics[width=0.2\paperwidth]{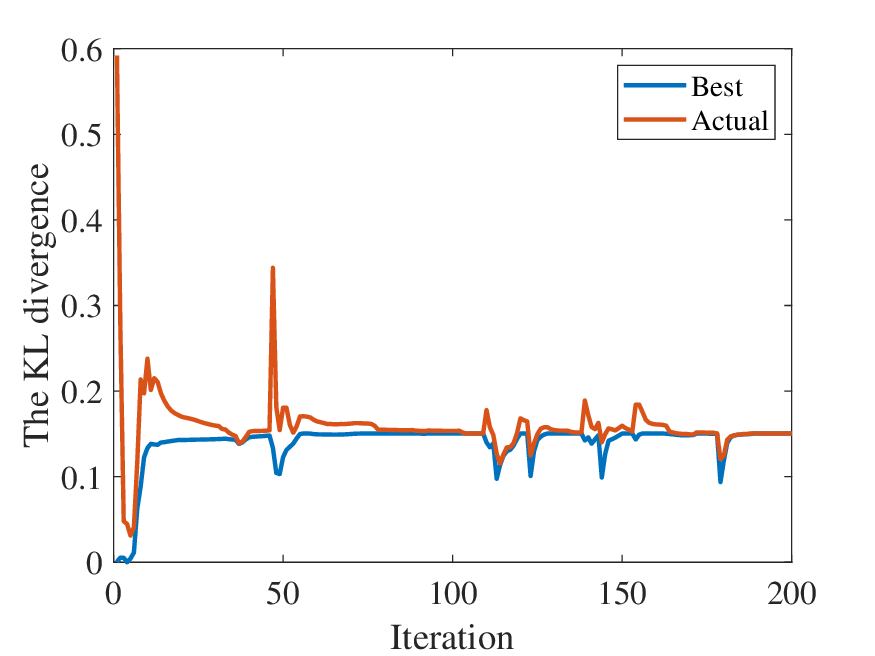}
	\label{fig:ADMMgraph}
	}}
	\subfloat[]{\scalebox{1}{
			\centering
		\includegraphics[width=0.2\paperwidth]{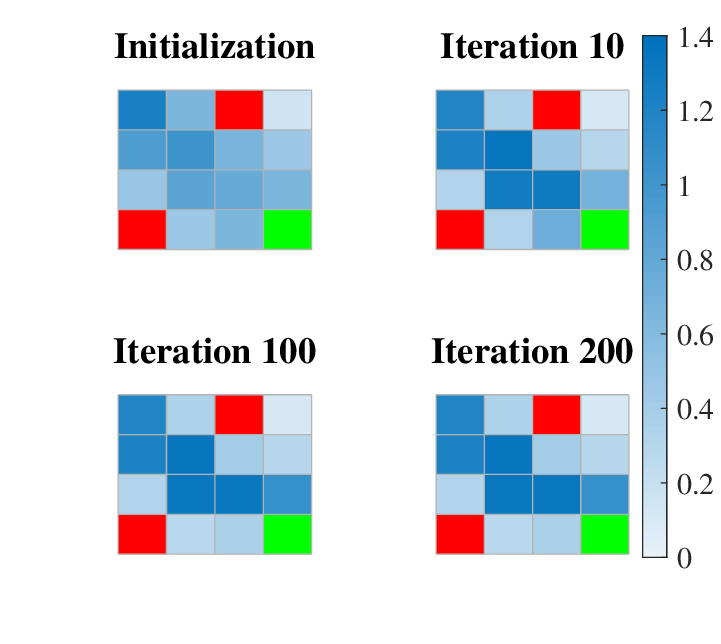}
		\label{fig:ADMMheatmaps}
	}}
\caption{(a) The KL divergence between the agent's policy and the reference policy. The curve ``Best'' refers to the case that the agent's policy is the best deceptive policy against the reference policy synthesized during the ADMM algorithm. The curve ``Actual'' refers to the case that the agent's policy is the policy synthesized during the ADMM algorithm. (b) Heatmaps of the occupation measures for the reference policy, i.e., $z^{\svisor, k}$ parameters of the Algorithm \ref{algo:admm}. The value of a state is the expected number of visits to the state.}
\end{figure}

The reference policy gradually gets away from the red states as shown in Figure \ref{fig:ADMMheatmaps}. Based on this observation, we expect that the relaxed problem given in Section \ref{subsection:relaxation} provides useful reference policies for the original problem. This expectation is indeed verified numerically: The reference policy synthesized via the relaxed problem, has a KL divergence of $0.150$, which is equal to the limit value of the ADMM algorithm.

\section{Conclusion} \label{section:conclusion}
We considered the problem of deception under a supervisor that provides a reference policy. We modeled the problem using MDPs and reachability specifications and proposed to use KL divergence for the synthesis of optimal deceptive policies. We showed that an optimal deceptive policy is stationary and its synthesis requires solving a convex optimization problem. We also considered the synthesis of optimal reference policies that easily prevent deception. We showed that this problem is NP-hard. We proposed a method based on the ADMM to compute a locally optimal solution and provided an approximation that can be modeled as a linear program.

In subsequent work we aim to extend the deception problem to a multi-agent settings where multiple malicious agents need to cooperate. Furthermore, it would be interesting to consider the case where a malicious agent first needs to detect the other malicious agents before cooperation. We also aim to study the scenario where the supervisor needs to learn the specification of the agent for the synthesis of the reference policy.

\begin{appendices}
    
 \section{Synthesis of Optimal Deceptive Policies under Nondeterministic Reference Policies} \label{section:nondeterministicref}
	In Problem \ref{problem:mindeviation}, we assume that the supervisor provides an explicit (possibly probabilistic) reference policy to the agent. It is possible that the reference policy is nondeterministic such that the supervisor disallows some actions at every state and the agent is allowed to take the other actions. We refer the interested readers to \cite{fard2011non} for the formal definition of the nondeterministic policies. A \textit{permissible policy} is a policy that takes an allowed action at every time step. The nondeterministic reference policy represents a set $\Pi^{\mathcal{S}}$ of policies that are permissible. The supervisor is indifferent between the policies in $\Pi^{\mathcal{S}}$. As in Section \ref{section:problemstatement}, we assume that the supervisor observes the transitions, but not the actions of the agent.
	
	We define the optimal deceptive policy as the policy that minimizes the KL divergence to any permissible policy subject to the task constraint of the agent. Under this definition, the optimal deceptive policy mimics the rare event that satisfies the agent's task, and that is most probable under one of the permissible policies. Formally, we solve the following problem for the synthesis of optimal deceptive policies under nondeterministic reference policies.
	
		\begin{problem}[Synthesis of Optimal Deceptive Policies under Nondeterministic Reference Policies]
		\label{problem:mindeviationnondet}
		Given an MDP $\mdp$, a reachability specification $\agentspec$, a probability threshold $\nu^{\agent}$, and a set $\Policy^{\svisor}$ of permissible policies, solve
		\begin{subequations}
			\label{problemeqn:mindeviationnondet}
			\begin{align}
			\underset{\substack{\policy^{\agent} \in \Policy(\mdp) \\ \policy^{\svisor} \in \Policy^{\svisor}}
			}{\inf} \quad
			&  KL\left(\Gamma^{\policy^{\agent}}_{\mdp} || \Gamma^{\policy^{\svisor}}_{\mdp} \right) \label{eq:pathprobmininfnondet}
			\\
			\text{\normalfont subject to } \quad
			& {\Pr }^{\policy^{\agent}}_{\mdp}(s_0 \models \agentspec) \geq \nu^{\agent}.
			\end{align}
		\end{subequations} If the optimal value is attainable, find a policy $\policy^{\agent}$ that is a solution to \eqref{problemeqn:mindeviationnondet}.
	\end{problem}

Similar to Assumption \ref{assumption:supervisorisstationary}, we assume that for every state in MDP $\mdp$, the set of allowed actions is fixed. For state $s \in \states$, we denote the set of allowed actions with $A^{\svisor}(s)$ and the possible successor state distributions with $\Lambda^{\svisor}(s)$. Under this assumption, we can identify the maximal end components of $\mdp$ for the permissible policies. If a maximal end component is closed, i.e., the maximum probability leaving the end component under the permissible policies is $0$, then the states of the maximal end component belongs to $C^{cl}$. If a maximal end component is open, i.e., the maximum probability leaving the end component under the permissible policies is $1$, then there exists an optimal policy that eventually leaves the end component since the agent can guarantee the same objective value by following a policy that leaves the end component and takes permissible actions after leaving the end component.

 While there exists an optimal policy that leaves the open end components eventually, the optimal policy may have infinite occupation measures at these states. We have the following assumption to ensure the boundedness of the occupation measures for these states. 
 \begin{assumption} \label{assumption:restimesarefinite}
 	For all $s \in \states\setminus (\closedset \cup \agentsset)$ and $a \in \actions(s)$, the occupation measure $x^{\agent}_{s,a}$ is upper bounded by $\theta$.
 \end{assumption}

As in the proof of Proposition \ref{proposition:stationaryisenough}, we can define a semi-infinite MDP with an optimal cost equal to the optimal value of Problem \ref{problem:mindeviationnondet}. In detail, at state $s \in S_{d}$, an action $a$ with successor state distribution $X_{s,a}$ has cost $\min_{X^{\svisor}_{s} \in \Lambda^{\svisor}_{s}} KL(X_{s,a} || X^{\svisor}_{s})$ where $X^{\svisor}_{s}$ is the distribution of successor states under the reference policy. We note that $\min_{X^{\svisor}_{s} \in \Lambda^{\svisor}_{s}} KL(X_{s,a} || X^{\svisor}_{s})$ is a convex function of $X_{s,a}$ since the Kullback-Leibler divergence is jointly convex in its arguments~\cite{boyd2004convex}. Since the occupation measures are bounded for all states in $\differset = \states \setminus (C_{cl} \cup C_{\agent})$ due to Assumption \ref{assumption:restimesarefinite} and the costs are convex functions of the policy parameters, there exists a stationary deterministic optimal policy for the semi-infinite MDP~\cite{altman1999constrained}. Consequently, there exists a stationary randomized optimal policy for Problem \ref{problem:mindeviationnondet} under Assumption \ref{assumption:restimesarefinite}.

Given that the optimal policy is stationary on $\mdp$, we compute the state-action occupation measures of the optimal policy by solving the following optimization problem:
\begin{subequations}
	\label{prog:stateprogramnondet}
	\begin{align}
	&\inf \quad \sum_{s \in \differset } \sum_{a \in \actions(s)} \sum_{q \in \successor(s)}  x^{\agent}_{s,a} \probs_{s,a,q} \nonumber
	\\& \hspace{0.5cm}\log \left( \frac{\sum_{a' \in \actions(s)} x^{\agent}_{s,a'} \probs_{s,a',q}}{ \left( \sum_{a \in A^{\svisor}(s)} \pi_{s,a} P_{s,a,q} \right) \left( \sum_{a' \in \actions(s)} x^{\agent}_{s,a'} \right)} \right) \label{eq:mininfrestimenondet}
	\\
	& \normalfont \text{subject to } \quad \nonumber
	\\ 
	&\eqref{cons:positiveactions} - \eqref{cons:reach1} \nonumber
	\\
	& \sum_{s \in \differset} \sum_{a \in \actions(s)} x^{\agent}_{s,a} \leq \theta,
	\\
	& \sum_{a \in A^{\svisor}(s)} \pi^{\svisor}_{s,a} = 1, \hspace{3.5cm} \forall s \in \differset 
	\end{align}
\end{subequations}
where the decision variables are $x^{\agent}_{s,a}$ for all $s \in \differset$ and $a \in \actions(s)$ and $\pi_{s,a}$ for all $s \in \differset$ and $a \in \actions^{S}(s)$. 

We remark that the objective function of \eqref{prog:stateprogramnondet} is a jointly convex function of $x^{\agent}_{s,a}$ and $\pi_{s,a}$ parameters. Having computed a set of optimal occupation measures, one can compute a stationary optimal deceptive policy.

  \color{black}{}
 
  \section{Proofs for the Technical Results} \label{section:proofs}
	We use the following definition and lemmas in the proof of Proposition \ref{proposition:expectedtimesarefinite}. We use ${\Pr}^{\policy}_{\mathcal{M}}(s \models \bigcirc\lozenge s)$ to denote the probability that $s$ is visited again from initial state $s$ under the stationary policy $\pi$ .
	
	\begin{definition}
		Let $Q$ be a probability distribution with a countable support $\mathcal{X}$. The \textit{entropy} of $Q$ is $H(Q) = -\sum_{x \in \mathcal{X}} Q(x) \log(Q(x)).$
	\end{definition}
	\begin{lemma}[Theorem 5.7 of \cite{conrad2004probability}] \label{lemma:geometricisthebest}
		Let $\mathcal{D}$ be the set of a distributions with support $\{ 1, 2, \ldots \}$ and the expected value of $c$. A random variable $X^{*} \sim Geo(1/c)$ maximizes $H(X)$ subject to $X \in \mathcal{D}$ where $H(X^{*}) = c \left( - \frac{1}{c} \log\left(  \frac{1}{c}\right) - \left(1- \frac{1}{c} \right) \log\left(1- \frac{1}{c} \right) \right) = c H\left(Ber\left(\frac{1}{c}\right)\right)$. 
	\end{lemma}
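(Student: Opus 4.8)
The plan is to recognize this as a maximum-entropy problem subject to a mean constraint and to prove it via the nonnegativity of KL divergence (Gibbs' inequality), an approach that simultaneously yields the optimal distribution, the optimal value, and uniqueness without a separate existence argument. First I would write down the geometric candidate explicitly: for $X^{*} \sim Geo(1/c)$ on $\{1, 2, \ldots\}$ the pmf is $g_k = \frac{1}{c}(1 - \frac{1}{c})^{k-1}$, which is a valid distribution with mean exactly $c$ whenever $c > 1$ (the boundary case $c = 1$ forces the point mass at $1$, has entropy $0$, and is handled separately using the convention $0\log 0 = 0$).

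The key step is the following. For any $p \in \mathcal{D}$ with pmf $p_k$, the inequality $KL(p || g) \geq 0$ expands to $-H(p) - \sum_k p_k \log g_k \geq 0$, hence $H(p) \leq -\sum_k p_k \log g_k$. I would then substitute $\log g_k = \log(1/c) + (k-1)\log(1 - 1/c)$ and invoke the two defining constraints $\sum_k p_k = 1$ and $\sum_k k p_k = c$ (so that $\sum_k (k-1) p_k = c - 1$) to collapse the right-hand side to $-\log(1/c) - (c-1)\log(1 - 1/c)$. The crucial observation is that this upper bound depends only on $c$ and not on the particular $p$; a short rearrangement then shows it equals the claimed value $c(-\frac{1}{c}\log(\frac{1}{c}) - (1 - \frac{1}{c})\log(1 - \frac{1}{c}))$.

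Since the upper bound is attained precisely when $p = g$ — by the equality condition of Gibbs' inequality, $KL(p||g) = 0$ iff $p = g$ — the geometric distribution is the unique maximizer and the displayed formula is its entropy. I expect the main obstacle to be bookkeeping about convergence and feasibility rather than any conceptual difficulty: I would need to confirm that $\sum_k p_k \log g_k$ converges absolutely (it does, because $\log g_k$ is affine in $k$ and the mean is finite, which in turn shows the bound on $H(p)$ is finite) and that the geometric candidate genuinely lies in $\mathcal{D}$. The elegance of the KL-divergence argument is that it bypasses having to establish a priori that a maximizer exists: exhibiting the feasible geometric distribution and showing it meets the universal upper bound proves optimality in a single stroke.
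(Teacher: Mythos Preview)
Your argument is correct: the Gibbs' inequality / KL-divergence approach is the standard and cleanest route to this maximum-entropy result, and your computation of the bound $-\log(1/c) - (c-1)\log(1-1/c) = c\bigl(-\tfrac{1}{c}\log\tfrac{1}{c} - (1-\tfrac{1}{c})\log(1-\tfrac{1}{c})\bigr)$ is right, as is your handling of the feasibility and convergence issues.

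There is nothing to compare against, however: the paper does not prove this lemma at all. It is stated with a citation to Theorem~5.7 of Conrad and used as a black box in the proof of Proposition~\ref{proposition:expectedtimesarefinite}. So your proposal supplies a self-contained proof where the paper simply invokes a reference.
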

	
	\begin{lemma} \label{lemma:statevisittimes}
		Consider an MDP $\mathcal{M} = (\states, \actions, \probs, \atomicprops, L)$. Let $N^{\policy}_s$ denote the number of visits to the state $s$ under a stationary policy $\policy$ such that $\mathbb{E}[N^{\policy}_s] < \infty$. $N^{\policy}_{s}$ satisfies $\Pr(N^{\policy}_s = 0) = {\Pr}^{\policy}_{\mathcal{M}}(s_0 \not\models \lozenge s)$ and ${\Pr}(N^{\policy}_s = i) = {\Pr}^{\policy}_{\mathcal{M}}(s_{0} \models \lozenge s) {\Pr}^{\policy}_{\mathcal{M}}(s \models \bigcirc\lozenge s)^{i-1} {\Pr}^{\policy}_{\mathcal{M}}(s \not\models \bigcirc\lozenge s).$
	\end{lemma}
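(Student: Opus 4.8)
The plan is to exploit the fact that, under a stationary policy, the induced process $\mdp^{\policy}$ is a time-homogeneous Markov chain, so that a regeneration (strong Markov) argument applies at every visit to $s$. The first identity is essentially definitional: the event $\{N^{\policy}_s = 0\}$ is precisely the event that no state along the path equals $s$, which is the event $\{s_0 \not\models \lozenge s\}$; hence $\Pr(N^{\policy}_s = 0) = {\Pr}^{\policy}_{\mdp}(s_0 \not\models \lozenge s)$. When $s_0 = s$ both sides are $0$, since $\lozenge s$ is then satisfied at time $0$; the distinction between the first visit, governed by $\lozenge s$, and subsequent visits, governed by $\bigcirc\lozenge s$, is exactly what the two temporal operators in the statement encode.

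For the second identity, I would set the abbreviations $p := {\Pr}^{\policy}_{\mdp}(s_0 \models \lozenge s)$ and $r := {\Pr}^{\policy}_{\mdp}(s \models \bigcirc\lozenge s)$, so that ${\Pr}^{\policy}_{\mdp}(s \not\models \bigcirc\lozenge s) = 1 - r$ and the target reads $\Pr(N^{\policy}_s = i) = p\, r^{i-1}(1-r)$ for $i \geq 1$. Introduce the successive visit times $\tau_1 < \tau_2 < \cdots$, where $\tau_1$ is the first time the chain is in $s$ and $\tau_{k+1}$ is the first time strictly after $\tau_k$ at which the chain returns to $s$ (each possibly $\infty$), so that $\{N^{\policy}_s \geq k\} = \{\tau_k < \infty\}$. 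I would then establish, by induction on $k$, that $\Pr(\tau_k < \infty) = p\, r^{k-1}$: the base case $\Pr(\tau_1 < \infty) = p$ is the definition of $p$, and for the inductive step I apply the strong Markov property at the stopping time $\tau_k$. Conditioned on $\{\tau_k < \infty\}$, the post-$\tau_k$ trajectory is a fresh copy of the chain started in $s$, so $\Pr(\tau_{k+1} < \infty \mid \tau_k < \infty)$ equals exactly ${\Pr}^{\policy}_{\mdp}(s \models \bigcirc\lozenge s) = r$, independent of $k$; this yields $\Pr(\tau_{k+1} < \infty) = r\,\Pr(\tau_k < \infty)$ and closes the induction.

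With $\Pr(N^{\policy}_s \geq k) = p\, r^{k-1}$ in hand, the per-value probabilities follow by telescoping, $\Pr(N^{\policy}_s = i) = \Pr(N^{\policy}_s \geq i) - \Pr(N^{\policy}_s \geq i+1) = p\, r^{i-1} - p\, r^{i} = p\, r^{i-1}(1-r)$, which is the claimed formula. Finally, the hypothesis $\mathbb{E}[N^{\policy}_s] < \infty$ guarantees $r < 1$ whenever $p > 0$: otherwise $\{N^{\policy}_s = \infty\}$ would carry the positive probability $p$ and the expectation would diverge. Thus the geometric law above is a proper distribution with finite mean $p/(1-r)$, consistent with the assumption.

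The main obstacle will be the careful justification of the regeneration step, namely that the strong Markov property applies at the random times $\tau_k$ and delivers return events that are identically distributed with the single-step return probability $r$. This requires verifying that each $\tau_k$ is a stopping time for the induced chain and that time-homogeneity makes the post-$\tau_k$ return probability independent of $k$ and of the elapsed history; everything else is bookkeeping via the representation $\{N^{\policy}_s \geq k\} = \{\tau_k < \infty\}$ and a telescoping subtraction.
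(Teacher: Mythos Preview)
Your argument is correct: the regeneration approach via the strong Markov property at the successive hitting times $\tau_k$ is exactly the standard way to establish that the number of visits to a state in a time-homogeneous Markov chain follows the shifted-geometric law stated in the lemma, and your telescoping step and the observation that $\mathbb{E}[N^{\policy}_s] < \infty$ forces $r < 1$ are both accurate.

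There is, however, nothing to compare against: the paper states Lemma~\ref{lemma:statevisittimes} without proof, treating it as a standard fact about Markov chains that it then invokes in the proof of Proposition~\ref{proposition:expectedtimesarefinite}. Your write-up supplies precisely the routine justification the paper omits.
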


	\begin{proof}[\textbf{Proof of Proposition \ref{proposition:expectedtimesarefinite}}]
		We prove this proposition by contradiction. We first provide a lower bound for the objective function of Problem \ref{problem:mindeviation}. Then, we show that as the state-action occupation measures approach to infinity, the lower bound approaches to infinity. Hence, the state-action occupation measures must be bounded in order to have a finite value for the objective function of Problem \ref{problem:mindeviation}.
		
		Let $d^{*}$ be the optimal value of Problem \ref{problem:mindeviation}. For a state $s \in S \setminus \closedset$, first consider the case $\Pr^{\policy^{\svisor}}_{\mathcal{M}}(s_{0} \models \lozenge s) = 0$, i.e., $s$ is unreachable under $\policy^{\svisor}$. In this case, the agent's policy $\policy^{\agent}$ must satisfy $\Pr^{\policy^{\agent}}_{\mathcal{M}}(s_{0} \models \lozenge s) = 0$, i.e., $s$ must be unreachable under $\policy^{\agent}$, otherwise the KL divergence is infinite. Hence the occupation measure is zero in this case.
		
		Consider $\Pr^{\policy^{\svisor}}_{\mathcal{M}}(s_{0} \models \lozenge s) > 0$. For this case, we will show that if the occupation measure is greater than some finite value, then the KL divergence between the path distributions is greater than $d^{*}$. Denote the number visits to $s$ with $N^{\policy^{\agent}}_{s}$ and $N^{\policy^{\svisor}}_{s}$ under $\policy^{\agent}$ and $\policy^{\svisor}$, respectively. We have the following claim: Given $\Pr^{\policy^{\svisor}}_{\mathcal{M}}(s_{0} \models \lozenge s) > 0$, $\Pr^{\policy^{\svisor}}_{\mathcal{M}}(s \models \bigcirc \lozenge s) \in [0,1)$, and $d^{*} >0$, there exists an $M_{s}$ such that for all $\policy^{\agent}$ that satisfies $\mathbb{E}[N^{\policy^{\agent}}_{s}] > M_{s}$, we have $KL(\Gamma^{\policy^{\agent}}_{\mathcal{M}_p} || \Gamma^{\policy^{\svisor}}_{\mathcal{M}_p}) > d^{*}$. 
		
		We consider a partitioning of paths according to the number of times $s$ appears in a path. By the data processing inequality given in \eqref{ineq:dataprocessing}, we have that $KL(\Gamma^{\policy^{\agent}}_{\mathcal{M}_p} || \Gamma^{\policy^{\svisor}}_{\mathcal{M}_p}) \geq KL(N^{\policy^{\agent}}_{s} || N^{\policy^{\svisor}}_{s})$, i.e., the KL divergence between the path distributions is lower bounded by the KL divergence between the distributions of number visits to $s$. Therefore it suffices to prove the following claim: Given $\Pr^{\svisor}_{\mathcal{M}}(s_0 \models \lozenge s) > 0$, $\Pr^{\svisor}_{\mathcal{M}}(s \models \bigcirc \lozenge s) \in [0,1)$, and $d^{*} >0$, there exists an $M_{s}$ such that for all $\policy^{\agent}$ that satisfies $\mathbb{E}[N^{\policy^{\agent}}_{s}] > M_{s}$, we have $KL(N^{\policy^{\agent}}_{s} || N^{\policy^{\svisor}}_{s}) > d^{*}$. 
		
		Define a random variable $\hat{N}^{\policy^{\agent}}_{s}$ such that $\Pr(\hat{N}^{\policy^{\agent}}_{s} = i) = \Pr(N^{\policy^{\agent}}_{s} = i | N^{\policy^{\agent}}_{s} > 0).$ For notational convenience denote $r^{\svisor} = 1 - \Pr^{\svisor}_{\mathcal{M}}(s_0 \models \lozenge s)$, $l^{\svisor} = \Pr^{\svisor}_{\mathcal{M}}(s \models \bigcirc \lozenge s)$, $p_{i} = \Pr(N^{\policy^{\agent}}_{s} = i)$ and $\hat{p}_{i} = \Pr(\hat{N}^{\policy^{\agent}}_{s} = i)$. Also define $ M^{\agent}_{s} := \mathbb{E}[N^{\policy^{\agent}}_{s}]$, $\hat{M}^{\agent}_{s}:= \mathbb{E}[\hat{N}^{\policy^{\agent}}_{s}] = \frac{M^{\agent}_{s}}{1 - p_{0}}$, and $M^{\svisor}_{s} :=\mathbb{E}[N^{\policy^{\svisor}}_{s}] $. 
		
		We want to show that $M^{\agent}_{s}$ is bounded for a finite $d^{*}$. Assume that $M^{\agent}_{s} \leq M^{\svisor}_{s}$. In this case the $M^{\agent}_{s}$ is finite since $M^{\svisor}_{s}$ is finite. If $M^{\agent}_{s} > M^{\svisor}_{s}$, we have
		\small
		\begin{subequations}
			\begin{align}
			&KL(N^{\policy^{\agent}}_{s} || N^{\policy^{\svisor}}_{s}) 
			\\
			&= p_{0} \log \left( \frac{p_{0}}{r^{\svisor}}\right) + \sum_{i=1}^{\infty} p_{i} \log \left( \frac{p_{i}}{(1-r^{\svisor}) (l^{\svisor})^{i-1} (1 - l^{\svisor})}\right) \label{eq:KLstateopenedup}
						\\
			&= p_{0} \log \left( \frac{p_{0}}{r^{\svisor}}\right) +  \sum_{i=1}^{\infty}(1-p_{0}) \hat{p}_{i} \log \left( \frac{1-p_{0}}{1 - r^{\svisor}}\right) \nonumber
			\\
			&\quad  + \sum_{i=1}^{\infty} (1 - p_{0})\hat{p}_{i} \log \left( \frac{\hat{p}_{i}}{ (l^{\svisor})^{i-1} (1 - l^{\svisor})}\right)
			\\
			&= p_{0} \log \left( \frac{p_{0}}{r^{\svisor}}\right) +  (1-p_{0}) \log \left( \frac{1-p_{0}}{1 - r^{\svisor}}\right) \nonumber
			\\
			&\quad  + \sum_{i=1}^{\infty} (1 - p_{0})\hat{p}_{i} \log \left( \frac{\hat{p}_{i}}{ (l^{\svisor})^{i-1} (1 - l^{\svisor})}\right)
			\\
			&\geq (1- p_{0})\sum_{i=1}^{\infty}\hat{p}_{i} \log \left( \frac{\hat{p}_{i}}{ (l^{\svisor})^{i-1} (1 - l^{\svisor})}\right) \label{eq:removedtermsarepositive}
			\\
			&= (1 - p_{0})\sum_{i=1}^{\infty}\hat{p}_{i} \log \left( \hat{p}_{i} \right) \nonumber
			\\
			&- (1 - p_{0})\sum_{i=1}^{\infty}\hat{p}_{i} \log \left( (l^{\svisor})^{i-1} (1 - l^{\svisor}) \right) 
			\\
			&=-( 1 -p_{0}) H(\hat{N}^{\policy^{\agent}}_{s}) - (1 - p_{0})\sum_{i=1}^{\infty}\hat{p}_{i} \log \left( (l^{\svisor})^{i-1} (1 - l^{\svisor}) \right)
			\end{align}
		\end{subequations} \normalsize
	where the equality \eqref{eq:KLstateopenedup} follows from Lemma \ref{lemma:statevisittimes}. The inequality in \eqref{eq:removedtermsarepositive} holds since the removed terms correspond to $KL(Ber(p_{0}) || Ber(r^{\svisor}))$ which is nonnegative.
	
	By using Lemma \ref{lemma:geometricisthebest} to upper bound $H(\hat{N}^{\policy^{\agent}}_{s})$ and the definitions  we have the following inequality.
	\small
	\begin{subequations}
		\begin{align}
			&KL(N^{\policy^{\agent}}_{s} || N^{\policy^{\svisor}}_{s}) \nonumber
			\\
			&\geq (p_{0} -1)\left( H(\hat{N}^{\policy^{\agent}}_{s}) + \sum_{i=1}^{\infty}\hat{p}_{i} \log \left( (l^{\svisor})^{i-1} (1 - l^{\svisor}) \right) \right)
			\\
			&\geq (p_{0} -1) \left(\hat{M}^{\agent}_{s} H \left( Ber\left( \frac{1}{\hat{M}^{\agent}_{s}}\right) \right) \right.  \nonumber
			\\
			&\quad \left. + \sum_{i=1}^{\infty}\hat{p}_{i} \log \left( (l^{\svisor})^{i-1} (1 - l^{\svisor}) \right)  \right) \label{eq:entropyupperbound}
				\\
			&= (p_{0} -1) \left(\hat{M}^{\agent}_{s} H \left( Ber\left( \frac{1}{\hat{M}^{\agent}_{s}}\right) \right) \right.  \nonumber
			\\
			&\quad \left. + \sum_{i=1}^{\infty}\hat{p}_{i} \left( (i-1) \log  (l^{\svisor})  +  \log(1 - l^{\svisor}) \right)  \right)
			\\
			&= (p_{0} -1) \left(\hat{M}^{\agent}_{s} H \left( Ber\left( \frac{1}{\hat{M}^{\agent}_{s}}\right) \right) \right.  \nonumber
			\\
			&\quad \left. + \left( \log(1 - l^{\svisor})  + \left( \hat{M}^{\agent}_{s} - 1\right) \log ( l^{\svisor} )   \right)  \right)
			\\
			&= (1- p_{0}) \left(- \hat{M}^{\agent}_{s} H \left( Ber\left( \frac{1}{\hat{M}^{\agent}_{s}}\right) \right) \right.  \nonumber
			\\
			&= M^{\agent}_{s} \left( KL\left( Ber\left(\frac{1}{\hat{M}^{\agent}_{s}}\right) || Ber(1 - l^{\svisor}) \right) \right)
			\end{align}
		\end{subequations}
	\normalsize

Now assume that $M^{\agent}_{s} \geq \frac{c}{1 - l^{\svisor}}$ where $c>1$ is a constant. In this case, we have
			\begin{subequations} \small
	\begin{align}
	&KL(N^{\policy^{\agent}}_{s} || N^{\policy^{\svisor}}_{s})
	\\
	&\geq M^{\agent}_{s} \left( KL\left( Ber\left(\frac{1}{\hat{M}^{\agent}_{s}}\right) || Ber(1 - l^{\svisor}) \right) \right)
	\\
	&\geq M^{\agent}_{s} \left( KL \left( Ber \left( \frac{1}{M^{\agent}_{s}} \right) || Ber \left(1 - l^{\svisor} \right) \right) \right) \label{eq:lowerbound}
	\\
	&\geq M^{\agent}_{s} \left( KL \left( Ber \left( \frac{1 - l^{\svisor}}{c} \right) || Ber \left(1 - l^{\svisor} \right) \right) \right) 
	\end{align}
\end{subequations} \normalsize
since $\hat{M}^{\agent}_{s} > M^{\agent}_{s}$ and for a variable $x$ such that $x \geq \frac{1}{1 - l^{\svisor}}$, the value of $KL(Ber(\frac{1}{x}) || Ber(1 - l^{\svisor}))$ is increasing in $x$. 

Note that $KL \left( Ber \left( \frac{1 - l^{\svisor}}{c} \right) || Ber \left(1 - l^{\svisor} \right) \right)$ is a positive constant. We can easily see that there exists an $M_{s}$ such that $KL(N^{\policy^{\agent}}_{s} || N^{\policy^{\svisor}}_{s}) > d^{*}$ if $M^{\agent}_{s} > M_s$.

We proved that for a given constant, for every transient state of the supervisor the occupancy measure under the agent's policy must be bounded by some constant otherwise the KL divergence between distributions for the number of states to this state is greater than the constant. Since the KL divergence between the path distributions is lower bounded by the KL divergence for states, the finiteness of the KL divergence between the path distributions implies that the occupancy measure under the agent's policy for every transient state of the supervisor. 

Thus, if the optimal value of Problem \ref{problem:mindeviation} is finite, the occupation measures under $\policy^{\agent}$ must be bounded by some $M_{s} < \infty$ for all $s \in S \setminus \closedset$.
	\end{proof}

	We use the following definition in the proof of Lemma \ref{lemma:globalislocal}. We remark that the proof of Lemma \ref{lemma:globalislocal} is fairly similar with the proof of Lemma 2 from \cite{savas2018entropy}. 
	\begin{definition}
		A \textit{k-length path fragment} $\xi = s_0 s_1 \ldots s_{k}$ for an MDP $\mdp$ is a sequence of states under policy $\policy= \mu_0\mu_1\ldots$ such that $\sum_{a \in \actions(s_t)} \probs(s_t,a,s_{t+1}) \mu_t(s_t,a) > 0$ for all $k > t\geq0$. The distribution of k-length path fragments for $\mdp$ under policy $\policy$ is denoted by $\Gamma^{\policy}_{\mdp,k}$.
	\end{definition}

	\begin{lemma} \label{lemma:globalislocal} 
		The KL divergence $KL(\Gamma^{\policy^{\agent}}_{\mdp,k} || \Gamma^{\policy^{\svisor}}_{\mdp,k})$ between the distributions of $k$-length path fragments for stationary policies $\policy^{\agent}$ and $\policy^{\svisor}$ is equal to the expected sum of KL divergences between the successor state distributions of $\policy^{\agent}$ and $\policy^{\svisor}$ that is
		\begin{align} \label{eq:globalislocal}
			& \sum_{t = 0}^{k-1} \sum_{s \in \differset} {\Pr}^{\policy^{\agent}}(s_t = s) \sum_{q \in \successor(s)} \sum_{a \in \actions(s)} \probs_{s,a,q} \policy^{\agent}_{s,a} \nonumber
			\\
			& \log\left( \frac{\sum_{a' \in \actions(s)} \probs_{s,a',q} \policy^{\agent}_{s,a'}}{ \sum_{a' \in \actions(s)} \probs_{s,a',q} \policy^{\svisor}_{s,a}}\right). \nonumber
		\end{align}
		Furthermore, if $KL(\Gamma^{\policy^{\agent}}_{\mdp} || \Gamma^{\policy^{\svisor}}_{\mdp})$ is finite, it is equal to
$$\sum_{s \in \differset} \sum_{q \in \differset} \sum_{a \in \actions(s)} \probs_{s,a,q} x^{\agent}_{s,a} \log\left( \frac{\sum_{a' \in \actions(s)} \probs_{s,a',q} x^{\agent}_{s,a'}}{ \policy^{\svisor}_{s,q} \sum_{a' \in \actions(s)} x^{\agent}_{s,a'}}\right)$$

	\end{lemma}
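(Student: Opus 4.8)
The plan is to establish the finite-horizon identity first, exploiting the product (Markov) structure of path-fragment distributions, and then to obtain the infinite-horizon residence-time formula by letting $k \to \infty$ and re-expressing expected visit counts as residence times.

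For the first claim, I would start from the factorization $\Gamma^{\policy^{\agent}}_{\mdp,k}(\xi) = \prod_{t=0}^{k-1} \probs^{\mu^{\agent}_t}(s_t,s_{t+1})$ of a fragment $\xi = s_0 \ldots s_k$, where $\probs^{\mu_t}(s,q) := \sum_{a \in \actions(s)} \probs_{s,a,q}\mu_t(s,a)$ is the one-step transition probability induced by the policy at time $t$. Substituting this into the definition of KL divergence turns $\log(\Gamma^{\policy^{\agent}}_{\mdp,k}(\xi)/\Gamma^{\policy^{\svisor}}_{\mdp,k}(\xi))$ into the additive form $\sum_{t=0}^{k-1}\log(\probs^{\mu^{\agent}_t}(s_t,s_{t+1})/\probs^{\mu^{\svisor}_t}(s_t,s_{t+1}))$. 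I would then interchange the finite sums over $t$ and over fragments $\xi$, and for each fixed $t$ marginalize the summand, which depends only on the pair $(s_t,s_{t+1})$, against $\Gamma^{\policy^{\agent}}_{\mdp,k}$ using $\Pr^{\policy^{\agent}}(s_t=s,\,s_{t+1}=q) = {\Pr}^{\policy^{\agent}}(s_t=s)\,\probs^{\mu^{\agent}_t}(s,q)$. Expanding $\probs^{\mu_t}$ back into its action sum yields the claimed expression; the restriction of the outer sum to $\states_d$ is justified because on $\states_p \setminus \states_d$ the agent's induced successor distribution coincides with the supervisor's, so the log-ratio vanishes there.

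For the infinite-horizon formula, I would specialize to stationary policies, so that $\mu^{\agent}_t=\policy^{\agent}$ and $\mu^{\svisor}_t=\policy^{\svisor}$ are time-independent and the per-step factor $D_s := \sum_{q \in \successor(s)}\probs^{\agent}(s,q)\log(\probs^{\agent}(s,q)/\probs^{\svisor}(s,q))$ does not depend on $t$. Summing the finite-horizon identity over $t$ gives $KL(\Gamma^{\policy^{\agent}}_{\mdp,k}\,||\,\Gamma^{\policy^{\svisor}}_{\mdp,k}) = \sum_{s \in \states_d}\bigl(\sum_{t=0}^{k-1}{\Pr}^{\policy^{\agent}}(s_t=s)\bigr)D_s$. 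Letting $k \to \infty$, the partial sums $\sum_{t=0}^{k-1}{\Pr}^{\policy^{\agent}}(s_t=s)$ increase to the expected number of visits $\sum_{a \in \actions(s)}x^{\agent}_{s,a}$, finite on $\states_d$ by Proposition \ref{proposition:expectedtimesarefinite}. Since each $D_s$ is itself a KL divergence between successor-state distributions and hence nonnegative, monotone convergence justifies interchanging the limit with the sum over $s$. Finally I would insert the stationarity relation $\policy^{\agent}_{s,a} = x^{\agent}_{s,a}/\sum_{a'}x^{\agent}_{s,a'}$ into $\probs^{\agent}(s,q)$, which clears denominators via $(\sum_{a'}x^{\agent}_{s,a'})\probs^{\agent}(s,q) = \sum_{a}\probs_{s,a,q}x^{\agent}_{s,a}$ and turns the log-argument into $\sum_{a'}\probs_{s,a',q}x^{\agent}_{s,a'}/(\policy^{\svisor}_{s,q}\sum_{a'}x^{\agent}_{s,a'})$, producing the stated residence-time expression.

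The delicate point, which I would treat most carefully, is the passage to the limit: both the interchange of $\lim_k$ with the summations and the identification of $\lim_k KL(\Gamma^{\policy^{\agent}}_{\mdp,k}\,||\,\Gamma^{\policy^{\svisor}}_{\mdp,k})$ with the full divergence $KL(\Gamma^{\policy^{\agent}}_{\mdp}\,||\,\Gamma^{\policy^{\svisor}}_{\mdp})$. The latter holds because truncation to the first $k$ states is a transformation of paths, so by the data processing inequality \eqref{ineq:dataprocessing} the fragment divergences are nondecreasing in $k$ and bounded above by the full divergence, converging to it (this is the standard characterization of KL divergence as the supremum over successive refinements). The assumed finiteness of the full divergence, equivalently of the residence times, is exactly what makes monotone convergence applicable and the resulting series well defined, so the finite-horizon identity carries over to the limit.
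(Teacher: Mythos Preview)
Your proposal is correct and follows essentially the same route as the paper's proof: factor the path-fragment probability, split the log into a telescoping sum, exchange finite sums, marginalize to two-step probabilities, and then pass to the limit $k\to\infty$ using the residence-time identity. If anything, your treatment of the limit is more careful than the paper's---you invoke monotone convergence and the data-processing inequality to justify $\lim_k KL(\Gamma^{\policy^{\agent}}_{\mdp,k}\,\|\,\Gamma^{\policy^{\svisor}}_{\mdp,k}) = KL(\Gamma^{\policy^{\agent}}_{\mdp}\,\|\,\Gamma^{\policy^{\svisor}}_{\mdp})$, whereas the paper simply asserts this identity; one small caveat is that Proposition~\ref{proposition:expectedtimesarefinite} as stated concerns the \emph{optimal} deceptive policy, though its proof actually yields the finiteness of residence times for any $\policy^{\agent}$ with finite path-divergence, which is what you need.
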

	\begin{proof}[\textbf{Proof of Lemma \ref{lemma:globalislocal}}]
		For MDP $\mathcal{M}$, denote the set of $k$-length path fragments by $\Xi_k$ and the probability of the $k$-length path fragment $\xi_k = s_0 s_1 \ldots s_{k}$ under the stationary policy $\policy$ by $\Pr^{\policy}(\xi_k)$. We have ${\Pr}^{\policy}(\xi_k) = \prod_{t=0}^{k-1} \sum_{a \in \actions(s_t)} \probs_{s_t,a,s_{t+1}} \policy_{s_t, a}.$ Consequently, we have 
		\small
		\begin{subequations}
			\begin{align*}
			&KL(\Gamma^{\policy^{\agent}}_{\mdp,k} || \Gamma^{\policy^{\svisor}}_{\mdp,k})  =\sum_{\xi_k \in \Xi_k} {\Pr}^{\policy^{\agent}}(\xi_k) \log \left( \frac{{\Pr}^{\policy^{\agent}}(\xi_k)}{{\Pr}^{\policy^{\svisor}}(\xi_k)} \right) 
			\\
			& = \sum_{t = 0}^{k-1}  \sum_{\xi_k \in \Xi_k} {\Pr}^{\policy^{\agent}}(\xi_k) \log \left( \frac{\sum_{a' \in \actions(s_t)} \probs_{s_t,a',s_{t+1}} \policy^{\agent}_{s_t, a'}}{\sum_{a' \in \actions(s_t)} \probs_{s_t,a',s_{t+1}} \policy^{\svisor}_{s_t, a'}} \right)
			\\
			& = \sum_{t = 0}^{k-1} \sum_{\xi_k \in \Xi_k} {\Pr}^{\policy^{\agent}}(\xi_k) \sum_{s \in \differset} \mathds{1}_{s}(s_t)  \\
			& \quad \sum_{q \in \successor(s)} \mathds{1}_{q}(s_{t+1} | s_t = s) \log \left( \frac{\sum_{a' \in \actions(s_t)} \probs_{s,a',q} \policy^{\agent}_{s,a'}}{\sum_{a' \in \actions(s_t)} \probs_{s,a',q} \policy^{\svisor}_{s,a'}} \right)		
			\\
			& = \sum_{t = 0}^{k-1} \sum_{s \in \differset}{\Pr}^{\policy^{\agent}}(s_t = s) \sum_{q \in \successor(s)} \sum_{a \in \actions(s_t)} 
			\\
			& \quad \probs_{s,a,q} \policy^{\agent}_{s,a'}  \log \left( \frac{\sum_{a' \in \actions(s_t)} \probs_{s,a',q} \policy^{\agent}_{s,a'}}{\sum_{a' \in \actions(s_t)} \probs_{s,a',q} \policy^{\svisor}_{s,a'}} \right)	
			\end{align*}
		\end{subequations}
	
		\normalsize
		
		If $KL(\Gamma^{\policy^{\agent}}_{\mdp} || \Gamma^{\policy^{\svisor}}_{\mdp})$ is finite, we have \small
	\begin{subequations}
		\begin{align*}
		&KL(\Gamma^{\policy^{\agent}}_{\mdp} || \Gamma^{\policy^{\svisor}}_{\mdp}) = \lim\limits_{k \to \infty} KL(\Gamma^{\policy^{\agent}}_{k} || \Gamma^{\policy^{\svisor}}_{k})
		\\
		&=\lim\limits_{k \to \infty} \sum_{s \in \differset}\sum_{q \in \successor(s)} \sum_{a \in \actions(s_t)} \sum_{t = 0}^{k-1} 
		\\
		& \quad {\Pr}^{\policy^{\agent}}(s_t = s) \probs_{s,a,q} \policy^{\agent}_{s,q}\log\left( \frac{\sum_{a' \in \actions(s_t)} \probs_{s,a',q} \policy^{\agent}_{s,q}}{\sum_{a' \in \actions(s_t)} \probs_{s,a',q}\policy^{\svisor}_{s,q}}\right)
		\\
		&=\sum_{s \in \differset} \sum_{q \in \successor(s)} \sum_{a \in \actions(s)} 
		\\
		& \probs_{s,a,q} x^{\agent}_{s,a} \log\left( \frac{\sum_{a' \in \actions(s)} \probs_{s,a',q} x^{\agent}_{s,a'}}{ \policy^{\svisor}_{s,q} \sum_{a' \in \actions(s)} x^{\agent}_{s,a'}}\right).
		\end{align*}
	\end{subequations} \normalsize
	
	Finally, since $P_{s,a,q}$ is zero for all $q \not\in Succ(s)$ and we defined $0 \log 0 = 0$, we can safely replace $Succ(s)$ with $S$. 
	\end{proof}

	\begin{proof}[\textbf{Proof of Proposition \ref{proposition:optimalisachieved}}]
		Assume that $KL(\Gamma^{\policy^{\agent}}_{\mathcal{M}} || \Gamma^{\policy^{\svisor}}_{\mathcal{M}})$ is finite under the stationary policies $\policy^{\agent}$ and $\policy^{\svisor}$. The objective function of the problem given in \eqref{problemeqn:mindeviation} is equal to $$\sum_{s \in \differset} \sum_{q \in \successor(s)} \sum_{a \in \actions(s)} \probs_{s,a,q} x^{\agent}_{s,a} \log\left( \frac{\sum_{a' \in \actions(s)} \probs_{s,a',q} x^{\agent}_{s,a'}}{ \policy^{\svisor}_{s,q} \sum_{a' \in \actions(s)} x^{\agent}_{s,a'}}\right)$$ due to Lemma \ref{lemma:globalislocal}.		The constraints \eqref{cons:positiveactions}-\eqref{cons:floweqn} define the stationary policies that make the states in $\differset$ have valid and finite occupation measures and the constraint \eqref{cons:reach1} encodes the reachability constraint. 
		
		Note that $$\sum_{q \in \states} \sum_{a \in \actions(s)} \probs_{s,a,q} x^{\agent}_{s,a} \log\left( \frac{\sum_{a' \in \actions(s)} \probs_{s,a',q} x^{\agent}_{s,a'}}{ \policy^{\svisor}_{s,q} \sum_{a' \in \actions(s)} x^{\agent}_{s,a'}}\right)$$ is the KL divergence between $\left[\sum_{a \in \actions(s)} \probs_{s,a,q} x^{\agent}_{s,a}\right]_{q \in \successor(s)}$ and $\left[\policy^{\svisor}_{s,q} \sum_{a' \in \actions(s)} x^{\agent}_{s,a'}\right]_{q \in \successor(s)}$, which is convex in $x^{\agent}_{s,a}$ variables. Since the objective function of \eqref{prog:stateprogram} is a sum of convex functions and the constraints are affine, \eqref{prog:stateprogram} is a convex optimization problem.   
		
		We now show that there exists a stationary policy on $\mdp$ that achieves the optimal value of \eqref{problem:mindeviation}. By Proposition \ref{proposition:expectedtimesarefinite}, we have that for all $s \in \differset$, the occupation measures must be bounded. We may apply the constraints $x^{\agent}_{s,a} \leq M_{s}$ for all $s$ in $\differset$ and $a$ in $\actions(s)$ without changing the optimal value of \eqref{prog:stateprogram}. After this modification, since the objective function is a continuous function of $x^{\agent}_{s,a}$ values and the feasible space is compact, there exists a set of occupation measure values, and consequently a stationary policy that achieves the optimal value of \eqref{prog:stateprogram}.
	\end{proof}

\begin{proof}[\textbf{Proof of Proposition \ref{proposition:supervisorattains}}]
	The condition $\probs_{s,a,q} > 0$ for all $s \in \states_{d}$, $a \in \actions(s)$, and $q \in Succ(s)$ implies that $\sum_{a \in \actions(s)} x^{\svisor}_{s,a} \probs_{s,a,q}$ is strictly positive for all $q \in \successor(s)$. Note that for the states $q \not\in Succ(s)$, we have $\sum_{a \in \actions(s)} x^{\agent}_{s,a} P(s,a,q)=0$. We also note that by Assumption \ref{assumption:copisfixed}, the occupation measures are bounded for all $s \in \states_{d}$ under $\policy^{\svisor}$. Hence, the objective function of \eqref{prog:stateprogramsupervisor} is bounded and jointly continuous in $x^{\svisor}_{s,a}$ and $x^{\agent}_{s,a}$.

	Since in we showed that there exists a policy that attains the optimal value of Problem \ref{problem:mindeviation}, we may represent the optimization problem given in \eqref{prog:stateprogramsupervisor} as $$\underset{{x^{\svisor}}}{\sup} \ \underset{x^{\agent}}{\min} \ f(x^{\svisor}, x^{\agent})$$ subject to $x^{\svisor} \in X^{\svisor}$ and $x^{\agent} \in X^{\agent}$. Note that $X^{\svisor}$ and $X^{\agent}$ are compact spaces, since the occupation measures are bounded for all state-action pairs. Given that $X^{\agent}$ is a compact space, the function $f'(x^{\svisor}) = {\min}_{x^{\agent}} f(x^{\svisor}, x^{\agent})$ is a continuous function of $x^{\svisor}$ \cite{clarke1975generalized}. The optimal value of ${\sup}_{{x^{\svisor}}}  \ f'(x^{\svisor})$ is attained. Consequently, there exists a policy $\pi^{\svisor}$ that achieves the optimal value of \eqref{prog:stateprogramsupervisor}.
\end{proof}

\color{blue}

\color{black}

\end{appendices}

\bibliography{ref2}	

\begin{IEEEbiography}[{\includegraphics[width=1in,height=1.25in,clip,keepaspectratio]{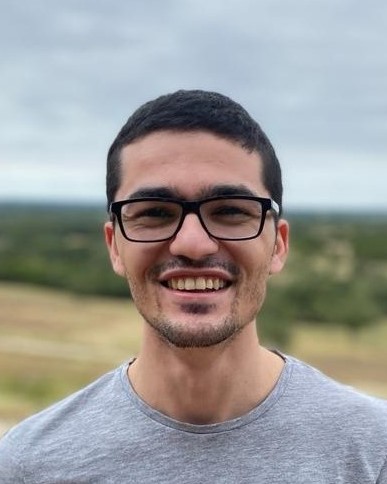}}]{Mustafa O. Karabag} joined the Department of Electrical and Computer Engineering at the University of Texas at Austin as a Ph.D. student in Fall 2017. He received his B.S. degree in Electrical and Electronics Engineering from Bogazici University in 2017. His research focuses on developing theory and algorithms for non-inferable, deceptive planning in adversarial environments.
\end{IEEEbiography}

\begin{IEEEbiography}[{\includegraphics[width=1in,height=1.25in,clip,keepaspectratio]{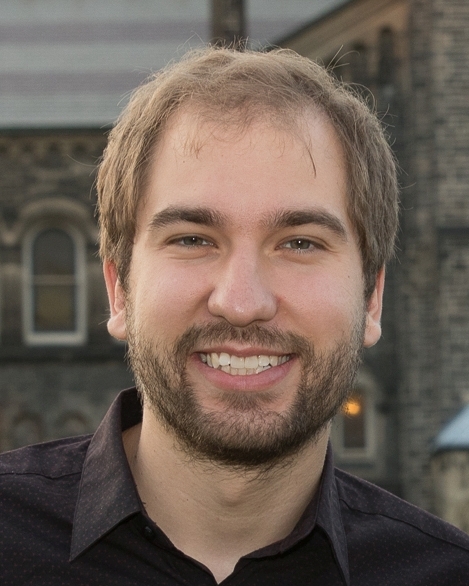}}] {Melkior Ornik} is an assistant professor in the Department of Aerospace Engineering and the Coordinated Science Laboratory at the University of Illinois at Urbana-Champaign. He received his Ph.D. degree from the University of Toronto in 2017. His research focuses on developing theory and algorithms for learning and planning of autonomous systems operating in uncertain, complex and changing environments, as well as in scenarios where only limited knowledge of the system is available.
\end{IEEEbiography}

\begin{IEEEbiography}[{\includegraphics[width=1in,height=1.25in,clip,keepaspectratio]{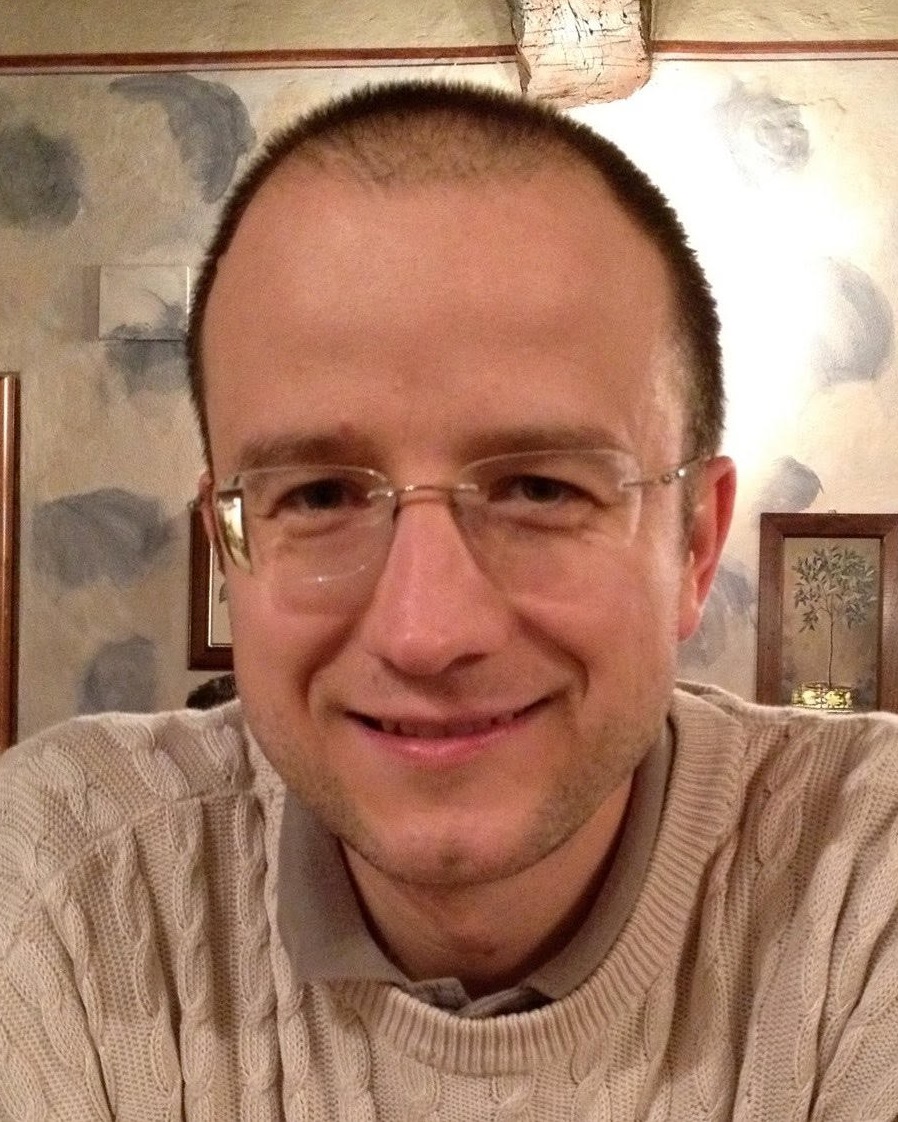}}]{Ufuk Topcu} is an associate professor at The University of Texas at Austin. He received his Ph.D. from the University of California, Berkeley in 2008. Ufuk held a postdoctoral research position at California Institute of Technology until 2012 and was a research assistant professor at the University of Pennsylvania until 2015. He is the recipient of the Antonio Ruberti Young Researcher Prize, the NSF CAREER Award and the Air Force Young Investigator Award. His research focuses on the design and verification of autonomous systems through novel connections between formal methods, learning theory and controls.
\end{IEEEbiography}

\newpage
\onecolumn
\section*{Supplementary Material for ``Deception in Supervisory Control''}

	This supplementary material contains the proof for NP-hardness of the synthesis of optimal reference policies (Proposition \ref{proposition:hardness}).

	\subsection{The set partition problem and the NP-hardness of the linear multiplicative programming}
The set partition problem~\cite{garey1979computers,karp1972reducibility} is the following:

\textbf{Instance: } An $m \times n$ $0-1$matrix $M$ satisfying $n > m$.

\textbf{Question: } Is there a $0-1$ vector $x$ satisfying 	$	 \sum_{\substack{j=1 \\ M_{ij} = 1}}^{n} x_{j} = 1$ for all $i \in [m]$.

The set partition problem is NP-hard.

Matsui \cite{matsui1996np} showed that the linear multiplicative programming is NP-hard. Let $M$ be an $m \times n$ $0-1$ matrix with $n \geq m$ and $n \geq 5$, and $p = n^{n^{4}}$. The problem 
\small
\begin{subequations}
	\label{matsuiprogramsupp}
	\begin{align}
	&\min \quad && (2p^{4n} - p + 2p^{2n} x_{0} + y_{0}) (2p^{4n} - p - 2p^{2n} x_{0} + y_{0}) 
	\\
	&\normalfont \text{subject to} && x_{i} \geq 0, \ \forall i \in [n]
	\\
	& && y_{ij} \geq 0, \ \forall i ,j \in [n]
	\\
	& \forall i\in [n], && x_{i} \leq 1
	\\
	&\forall i,j\in [n], && y_{ij} \leq 1
	\\
	&  \substack{\forall i,j \in [n] \\ i\neq j}, && x_{i}  \geq y_{ij}, 
	\\
	&  \substack{\forall i,j \in [n] \\ i\neq j}, && x_{j}  \geq y_{ij} , 
	\\
	&  \substack{\forall i,j \in [n] \\ i\neq j}, &&  y_{ij}  \geq x_{i} + x_{j} - 1, 
	\\
	& \forall i \in [n], &&  y_{ii} = x_{i},
	\\
	&\forall i \in [m], && \sum_{\substack{j=1 \\ M_{ij} = 1}}^{n} x_{j} = 1, \label{consMsupp}
	\\
	& && x_{0} = \sum_{i=1}^{n} p^{i} x_{i}
	\\
	& && y_{0} = \sum_{i=1}^{n} \sum_{j=1}^{n} p^{i+j} y_{ij} 
	\end{align}
\end{subequations}
\normalsize
where the decision variables are $x_{i}$ for all $i \in [n]$ and $y_{ij}$ for all $i,j \in [n]$ is NP-hard. 

In \cite{matsui1996np}, it is proved that the optimal value of \eqref{matsuiprogramsupp} is less than or equal to $4p^{8n}$ if and only if there exists a $0-1$ solution for $x_{1}, \ldots, x_{n}$ satisfying \eqref{consMsupp}. Since the decision problem of \eqref{matsuiprogramsupp} correspond to solving the set partition problem, \eqref{matsuiprogramsupp} is NP-hard.  

\subsection{The proof for the NP-hardness of the synthesis of optimal reference policies}

\begin{proof}[\textbf{Proof of Proposition 5}]
	We show the NP-hardness of Problem 2 by creating an instance of it that is equivalent to \eqref{matsuiprogramsupp}. The proof relies on the explicit construction of an MDP and specifications. 
	
	Let $M$ be an $m \times n$ $0-1$ matrix with $n \geq m$ and $n \geq 5$ and $d_{i} = \sum_{j=1}^{n} M_{ij}$ for all $i \in m$. Also let $p = n^{n^{4}}$, $c_{1} = \sum_{i=1}^{n} p^{i}$, $c_{2} = \sum_{i=1}^{n} \sum_{j=1}^{n} p^{i+j},$ and $C = 2p^{2n}c_{1} + c_{2} + 2p^{4n} - p.$

	Let $\mdp = (\states, \actions, \probs, \initialstate)$ be an MDP where $\states = \lbrace s_{0}, s^{+}_{\gamma}, q ,w_{1}, w_{2}\rbrace \cup \lbrace s^{+}_{i}, s^{-}_{i} | i \in [n] \rbrace \cup \lbrace v^{+}_{ij}, v^{-}_{ij} | i,j \in [n] \rbrace$, $\actions(s_{0}) = \lbrace \gamma, \delta \rbrace \cup \lbrace \alpha_{i} | i \in [n] \rbrace \cup \lbrace \beta_{ij} | i,j \in [n] \rbrace$ and $\actions(s) = \lbrace \epsilon \rbrace$ for all $s \in \states \setminus \lbrace s_{0} \rbrace$. The trasition probability function $\probs$ is defined as follows
	\begin{itemize}
		\item $\probs_{\initialstate, \alpha_{i}, s^{-}_{j}}=\frac{1}{2n^{2} + 2n + 2}$ for all $i,j \in [n]$ and $i\neq j$,
		\item $\probs_{\initialstate, \alpha_{i}, s^{-}_{i}}=0$ for all $i \in [n]$, 
		\item $\probs_{\initialstate, \alpha_{i}, s^{+}_{i}}=\frac{1}{2n^{2} + 2n + 2}$ for all $i \in [n]$, 
		\item $\probs_{\initialstate, \alpha_{i}, s^{+}_{j}}=0$ for all $i,j \in [n]$ and $i \neq j$,
		\item $\probs_{\initialstate, \alpha_{i}, v^{-}_{jk}}=\frac{1}{2n^{2} + 2n + 2}$ for all $i,j,k \in [n]$, 
		\item $\probs_{\initialstate, \alpha_{i}, v^{+}_{jk}}=0$ for all $i,j,k \in [n]$, 
		\item $\probs_{\initialstate, \alpha_{i}, w_{1}}=\frac{\frac{2p^{4n} -p }{n^{2} + n + 1} + 2p^{2n}p^{i}}{4C} $ for all $i\in [n]$, 
		\item $\probs_{\initialstate, \alpha_{i}, w_{2}}=\frac{\frac{2p^{4n} -p }{n^{2} + n + 1} - 2p^{2n}p^{i}}{4C} $ for all $i\in [n]$, 
		\item $\probs_{\initialstate, \alpha_{i}, s^{+}_{\gamma}}=0$ for all $i\in [n]$, 
		\item $\probs_{\initialstate, \alpha_{i}, q}= 1 - \sum_{s \in Succ(\initialstate) \setminus \lbrace q \rbrace} \probs_{\initialstate, \alpha_{i}, s}$ for all $i\in [n]$, 
		\item $\probs_{\initialstate, \beta_{ij}, s^{-}_{k}}=\frac{1}{2n^{2} + 2n + 2}$ for all $i,j,k \in [n]$,
		\item $\probs_{\initialstate, \beta_{ij}, s^{+}_{k}}=0$ for all $i \in [n]$, 
		\item $\probs_{\initialstate, \beta_{ij}, v^{-}_{kl}}=\frac{1}{2n^{2} + 2n + 2}$ for all $i, j,k,l \in [n]$ and $(i,j) \neq (k, l)$.
		\item $\probs_{\initialstate, \beta_{ij}, v^{-}_{ij}}=0$ for all $i, j \in [n]$.
		\item $\probs_{\initialstate, \beta_{ij}, v^{+}_{ij}}=0$ for all $i, j \in [n]$.
		\item $\probs_{\initialstate, \beta_{ij}, w_{k}}=\frac{\frac{2p^{4n} -p }{n^{2} + n + 1} + p^{i+j}}{4C} $ for all $i,j\in [n]$ and $k \in [2]$
		\item $\probs_{\initialstate, \beta_{ij}, s^{+}_{\gamma}}=0$ for all $i,j\in [n]$, 
		\item $\probs_{\initialstate, \beta_{ij}, q}= 1 - \sum_{s \in Succ(\initialstate) \setminus \lbrace q \rbrace} \probs_{\initialstate, \beta_{ij}, s}$ for all $i,j\in [n]$,
		
		\item $\probs_{\initialstate, \gamma, s^{-}_{i}}=\frac{1}{2n^{2} + 2n + 2}$ for all $i \in [n]$,
		\item $\probs_{\initialstate, \gamma, s^{+}_{i}}=0$ for all $i \in [n]$, 
		\item $\probs_{\initialstate, \gamma, v^{-}_{ij}}=\frac{1}{2n^{2} + 2n + 2}$ for all $i,j \in [n]$, 
		\item $\probs_{\initialstate, \gamma, v^{+}_{ij}}=0$ for all $i,j \in [n]$, 
		\item $\probs_{\initialstate, \gamma, w_{i}}=\frac{2p^{4n} -p }{4C(n^{2} + n + 1)} $ for all $i\in [2]$, 
		\item $\probs_{\initialstate, \gamma, s}=0$ for all $s \in Succ(\initialstate) \setminus \lbrace w_{1}, w{2}, q \rbrace$,
		\item $\probs_{\initialstate, \gamma, q}=0$ for all $s \in Succ(\initialstate) \setminus \lbrace q \rbrace$,  
		\item $\probs_{\initialstate, \delta, w_{i}}=\frac{1}{2}$ for all $i\in [2]$, 
		\item $\probs_{\initialstate, \delta, s}=0$ for all $s \in Succ(\initialstate) \setminus \lbrace w_{1}, w{2} \rbrace$,
		\item $\probs_{s, \epsilon, s} = 1$ for all $s \in \states \setminus \initialstate \rbrace$, and
		\item $\probs_{s, \epsilon, z} = 1$ for all $s \in \states \setminus \initialstate \rbrace$ and $s \neq z$.
	\end{itemize}
	
	We note that all states that can be reached from the initial state $\initialstate$ are absorbing.

	We consider the following specifications for the supervisor
	\begin{enumerate}
		\item $\Pr^{\policy^{\svisor}}_{\mdp}(\initialstate \models \lozenge \lbrace s^{+}_{i} | i\in [n] \rbrace \cup \lbrace s^{+}_{ij} | i,j\in [n] \rbrace \cup \lbrace s^{+}_{\gamma} \rbrace ) \geq \frac{1}{2n^{2} + 2n + 2}$, \label{supspec1}
		\item $\Pr^{\policy^{\svisor}}_{\mdp}(\initialstate \models \lozenge \lbrace s^{-}_{i} \rbrace )\geq \frac{1}{2n^{2} + 2n + 2}\left(1 - \frac{1}{n^{2} + n + 1}\right)$ for all $i\in [n]$, \label{supspec2}
		\item $\Pr^{\policy^{\svisor}}_{\mdp}(\initialstate \models \lozenge \lbrace v^{-}_{ij} \rbrace) \geq \frac{1}{2n^{2} + 2n + 2}\left(1 - \frac{1}{n^{2} + n + 1}\right)$ for all $i,j\in [n]$, \label{supspec3}
		\item $\Pr^{\policy^{\svisor}}_{\mdp}(\initialstate \models \lozenge \lbrace s^{+}_{i}, v^{-}_{ij} \rbrace) \geq\frac{1}{2n^{2} + 2n + 2}$ for all $i,j\in [n]$ and $i\neq j$,	\label{supspec4}	
		\item $\Pr^{\policy^{\svisor}}_{\mdp}(\initialstate \models \lozenge \lbrace s^{+}_{j}, v^{-}_{ij} \rbrace) \geq \frac{1}{2n^{2} + 2n + 2}$ for all $i,j\in [n]$ and $i\neq j$, \label{supspec5}
		\item $\Pr^{\policy^{\svisor}}_{\mdp}(\initialstate \models \lozenge \lbrace v^{+}_{ij}, s^{-}_{i}, s^{-}_{j} \rbrace) \geq \frac{1}{2n^{2} + 2n + 2}$ for all $i,j\in [n]$ and $i\neq j$, \label{supspec6}
		\item $\Pr^{\policy^{\svisor}}_{\mdp}(\initialstate \models \lozenge \lbrace v^{+}_{ii}, s^{-}_{i} \rbrace) \geq \frac{1}{2n^{2} + 2n + 2}$ for all $i\in [n]$,\label{supspec7}
		\item $\Pr^{\policy^{\svisor}}_{\mdp}(\initialstate \models \lozenge \lbrace v^{-}_{ii}, s^{+}_{i} \rbrace) \geq \frac{1}{2n^{2} + 2n + 2}$ for all $i\in [n]$, \label{supspec8}
		\item $\Pr^{\policy^{\svisor}}_{\mdp}(\initialstate \models \lozenge \lbrace s^{+}_{j} | M_{ij} = 1 \rbrace) \geq \frac{1}{2n^{2}+2n + 2} \frac{1}{n^2 + n + 1}$, for all $i\in [m]$, and 
		\item  $\Pr^{\policy^{\svisor}}_{\mdp}(\initialstate \models \lozenge \lbrace s^{-}_{j} | M_{ij} = 1 \rbrace) \geq \frac{1}{2n^{2}+2n + 2} \left( d_{i} - \frac{1}{n^2 + n + 1} \right)$ for all $i\in [m]$. 
	\end{enumerate}
	
	We consider the specification for the agent $\Pr^{\policy^{\agent}}_{\mdp}(\initialstate \models \lozenge w_{1}) \geq 1/2$. We note that the agent's policy satisfies $\pi^{\agent}_{\initialstate, \delta} = 1$ since the other actions reach $w_{1}$ with probability less than $1/2$.

We note that the policies for $\mdp$ can only differ for state $\initialstate$ at time $0$ since an absorbing state is reached after the first transition. Therefore, we can without loss of generality we can assume that the reference policy is stationary. We have the optimization problem 
\small
	\begin{subequations}
		\begin{align}
		&\max \quad &&\frac{1}{2} \log\left( \frac{1}{2k_{1}}\right) + \frac{1}{2} \log\left( \frac{1}{2k_{2}}\right)
		\\
		&\normalfont \text{subject to} 
		\\
		&\forall a \in \actions(\initialstate) \label{positiveactions},  && \pi^{\svisor}_{\initialstate,a} \geq 0,
		\\
		& && \sum_{i=1}^{n} \pi^{\svisor}_{\initialstate, \alpha_{i}} + \sum_{i=1}^{n}\sum_{j=1}^{n}  \pi^{\svisor}_{\initialstate, \beta_{ij}} + \pi^{\svisor}_{\initialstate, \gamma} + \pi^{\svisor}_{\initialstate, \delta}  = 1, \label{addupto1}
		\\
		& && \frac{1}{2n^{2}+2n + 2} \left( \sum_{i=1}^{n} \pi^{\svisor}_{\initialstate, \alpha_{i}} + \sum_{i=1}^{n}\sum_{j=1}^{n}  \pi^{\svisor}_{\initialstate, \beta_{ij}} + \pi^{\svisor}_{\initialstate, \gamma}  \right) \geq \frac{1}{2n^{2}+2n + 2},	\label{supspeccons1}
		\\
		& \forall i\in [n], && \frac{1}{2n^{2}+2n + 2} \left( \sum_{\substack{j=1 \\ j \neq i}}^{n} \pi^{\svisor}_{\initialstate, \alpha_{j}} + \sum_{j=1}^{n} \sum_{k=1}^{n}  \pi^{\svisor}_{\initialstate, \beta_{jk}} +  \pi^{\svisor}_{\initialstate, \gamma} \right) \geq \frac{1}{2n^2+2n+2}\left(1 - \frac{1}{n^{2}+n + 1} \right), \label{supspeccons2}
		\\
		&\forall i,j\in [n], && \frac{1}{2n^{2}+2n + 2} \left( \sum_{\substack{k = 1}}^{n} \pi^{\svisor}_{\initialstate, \alpha_{k}} + \underset{  (i,j) \neq (k,l)}{\sum_{k=1}^{n} \sum_{l=1}^{n}} \pi^{\svisor}_{\initialstate, \beta_{kl}} +  \pi^{\svisor}_{\initialstate, \gamma} \right) \geq \frac{1}{2n^{2}+2n + 2} \left(1 - \frac{1}{n^2+n+1} \right), \label{supspeccons3}
		\\
		&  \forall i,j \in [n], && \frac{1}{2n^{2}+2n + 2} \left( \pi^{\svisor}_{\initialstate, \alpha_{i}} + \sum_{\substack{k = 1}}^{n} \pi^{\svisor}_{\initialstate, \alpha_{k}} + \underset{  (i,j) \neq (k,l)}{\sum_{k=1}^{n} \sum_{l=1}^{n}} \pi^{\svisor}_{\initialstate, \beta_{kl}} +  \pi^{\svisor}_{\initialstate, \gamma}\right) \geq \frac{1}{2n^{2}+2n + 2} , \label{supspeccons4}
		\\
		& \forall i,j \in [n], && \frac{1}{2n^{2}+2n + 2} \left( \pi^{\svisor}_{\initialstate, \alpha_{j}} + \sum_{\substack{k = 1}}^{n} \pi^{\svisor}_{\initialstate, \alpha_{k}} + \underset{  (i,j) \neq (k,l)}{\sum_{k=1}^{n} \sum_{l=1}^{n}} \pi^{\svisor}_{\initialstate, \beta_{kl}} +  \pi^{\svisor}_{\initialstate, \gamma} \right) \geq \frac{1}{2n^{2}+2n + 2}, \label{supspeccons5}
		\\
		&  \substack{\forall i,j \in [n] \\ i\neq j}, && \frac{1}{2n^{2}+2n + 2} \left( \pi^{\svisor}_{\initialstate, \beta_{ij}} + \sum_{\substack{k = 1 \\ k \neq i}}^{n} \pi^{\svisor}_{\initialstate, \alpha_{k}} +  \sum_{\substack{k = 1 \\ k \neq j}}^{n} \pi^{\svisor}_{\initialstate, \alpha_{k}} + 2 \sum_{k=1}^{n} \sum_{l=1}^{n} \pi^{\svisor}_{\initialstate, \beta_{kl}} +  2\pi^{\svisor}_{\initialstate, \gamma} \right) \geq \frac{1}{n^{2}+n + 1}  - \frac{1}{2(n^2 + n + 1)^2}, \label{supspeccons6}
		\\
		& \forall i \in [n], && \frac{1}{2n^{2}+2n + 2} \left(  \pi^{\svisor}_{\initialstate, \beta_{ii}} + \sum_{\substack{k = 1\\ k \neq i}}^{n} \pi^{\svisor}_{\initialstate, \alpha_{k}} + \sum_{k=1}^{n} \sum_{l=1}^{n} \pi^{\svisor}_{\initialstate, \beta_{kl}} +  \pi^{\svisor}_{\initialstate, \gamma} \right) \geq \frac{1}{2n^{2}+2n + 2}, \label{supspeccons7}
		\\
		&\forall i \in [n], && \frac{1}{2n^{2}+2n + 2} \left(  \pi^{\svisor}_{\initialstate, \alpha_{i}} + \sum_{\substack{k = 1 }}^{n} \pi^{\svisor}_{\initialstate, \alpha_{k}} + \underset{  (i,i) \neq (k,l)}{\sum_{k=1}^{n} \sum_{l=1}^{n}} \pi^{\svisor}_{\initialstate, \beta_{kl}} +  \pi^{\svisor}_{\initialstate, \gamma}\right) \geq \frac{1}{2n^{2}+2n + 2}, \label{supspeccons8}
		\\
		&\forall i \in [m], && \frac{1}{2n^{2}+2n + 2} \left(\sum_{\substack{j=1 \\ M_{ij} = 1}}^{n}   \pi^{\svisor}_{\initialstate, \alpha_{i}} \right) \geq \frac{1}{2n^{2}+2n + 2} \frac{1}{n^2 + n + 1}, \label{supspeccons9}
		\\
		& \forall i \in [m], &&  \frac{1}{2n^{2}+2n + 2} \left(\sum_{\substack{j=1 \\ M_{ij} = 1}}^{n} \left( \sum_{\substack{k=1 \\ k \neq j}}^{n} \pi^{\svisor}_{\initialstate, \alpha_{k}} + \sum_{k=1}^{n} \sum_{l=1}^{n}  \pi^{\svisor}_{\initialstate, \beta_{kl}} +  \pi^{\svisor}_{\initialstate, \gamma} \right) \right) \geq \frac{1}{2n^{2}+2n + 2} \left( d_{i} - \frac{1}{n^2 + n + 1} \right),\label{supspeccons10}
		\\
		& && k_{1} = \frac{2p^{4n} -p }{4C(n^{2} + n + 1)} \pi^{\svisor}_{\initialstate, \gamma} +\sum_{i=1}^{n} \frac{\frac{2p^{4n} -p }{n^{2} + n + 1} + 2p^{2n}p^{i}}{4C} \pi^{\svisor}_{\initialstate, \alpha_{i}} + \sum_{i=1}^{n}\sum_{j=1}^{n}\frac{\frac{2p^{4n} -p }{n^{2} + n + 1} + p^{i+j}}{4C} \pi^{\svisor}_{\initialstate, \beta_{ij}} + \frac{\pi^{\svisor}_{\initialstate, \delta}}{2}
				\\
		& && k_{2} = \frac{2p^{4n} -p }{4C(n^{2} + n + 1)} \pi^{\svisor}_{\initialstate, \gamma} +\sum_{i=1}^{n} \frac{\frac{2p^{4n} -p }{n^{2} + n + 1} + 2p^{2n}p^{i}}{4C} \pi^{\svisor}_{\initialstate, \alpha_{i}} + \sum_{i=1}^{n}\sum_{j=1}^{n}\frac{\frac{2p^{4n} -p }{n^{2} + n + 1} + p^{i+j}}{4C} \pi^{\svisor}_{\initialstate, \beta_{ij}} +  \frac{\pi^{\svisor}_{\initialstate, \delta}}{2}
		\end{align}
	\end{subequations}
\normalsize
	where the decision variables are $\pi^{\svisor}_{\initialstate, a}$ for all $a \in \actions(\initialstate)$, $k_{1}$, and $k_{2}$.  The constraints \eqref{positiveactions} and \eqref{addupto1} ensure the validity of the reference policy, and constraints \eqref{supspeccons1}-\eqref{supspeccons10} correspond to the specifications $1$-$10$, respectively. 

	\end{proof}

We note that $\pi^{\svisor}_{\initialstate, \delta} = 0$ due to constraints \eqref{positiveactions}, \eqref{addupto1}, and \eqref{supspeccons1}. Using $\pi^{\svisor}_{\initialstate, \delta} = 0$ and constraint \eqref{addupto1}, and scaling the constraints with $2n^2 + 2n + 2$, we get the following equivalent optimization problem
 
\small
\begin{subequations}
	\label{opt:scaledversion}
	\begin{align}
	&\max \quad &&\frac{1}{2} \log\left( \frac{1}{2k_{1}}\right) + \frac{1}{2} \log\left( \frac{1}{2k_{2}}\right)
	\\
	&\normalfont \text{subject to} 
	\\
	&\forall a \in \actions(\initialstate)\setminus\lbrace \gamma, \delta \rbrace, && \pi^{\svisor}_{\initialstate,a} \geq 0,
	\\
	& && 1 - \sum_{a \in  \actions(\initialstate)\setminus\lbrace \gamma, \delta \rbrace } \pi^{\svisor}_{\initialstate,a} \geq 0 \label{uselesscons}
	\\
	& \forall i\in [n], && \pi^{\svisor}_{\initialstate, \alpha_{i} } \leq  \frac{1}{n^{2}+n + 1}, \label{limitedalpha}
	\\
	&\forall i,j\in [n], && \pi^{\svisor}_{\initialstate, \beta_{ij}} \leq \frac{1}{n^2+n+1} , \label{limitedbeta}
	\\
	&  \forall i,j \in [n], &&  \pi^{\svisor}_{\initialstate, \alpha_{i}}  \geq \pi^{\svisor}_{\initialstate, \beta_{ij}} , 
	\\
&  \forall i,j \in [n], &&  \pi^{\svisor}_{\initialstate, \alpha_{j}}  \geq \pi^{\svisor}_{\initialstate, \beta_{ij}} , 
	\\
	&  \substack{\forall i,j \in [n] \\ i\neq j}, &&  \pi^{\svisor}_{\initialstate, \beta_{ij}}  \geq \pi^{\svisor}_{\initialstate, \alpha_{i}} + \pi^{\svisor}_{\initialstate, \alpha_{j}} - \frac{1}{(n^2 + n + 1)}, 
	\\
	& \forall i \in [n], &&  \pi^{\svisor}_{\initialstate, \beta_{ii}} \geq \pi^{\svisor}_{\initialstate, \alpha_{i}},
	\\
& \forall i \in [n], &&  \pi^{\svisor}_{\initialstate, \beta_{ii}} \leq \pi^{\svisor}_{\initialstate, \alpha_{i}},
	\\
	&\forall i \in [m], && \sum_{\substack{j=1 \\ M_{ij} = 1}}^{n} \pi^{\svisor}_{\initialstate, \alpha_{j}} \geq \frac{1}{n^2 + n + 1}, 
	\\
	& \forall i \in [m], &&  \sum_{\substack{j=1 \\ M_{ij} = 1}}^{n}  \pi^{\svisor}_{\initialstate, \alpha_{j}} \leq \frac{1}{n^2 + n + 1},
	\\
& && k_{1} = \frac{2p^{4n} -p }{4C(n^{2} + n + 1)} +\sum_{i=1}^{n} \frac{ 2p^{2n}p^{i}}{4C} \pi^{\svisor}_{\initialstate, \alpha_{i}} + \sum_{i=1}^{n}\sum_{j=1}^{n}\frac{ p^{i+j}}{4C} \pi^{\svisor}_{\initialstate, \beta_{ij}},
\\
& && k_{2} = \frac{2p^{4n} -p }{4C(n^{2} + n + 1)} +\sum_{i=1}^{n} \frac{ 2p^{2n}p^{i}}{4C} \pi^{\svisor}_{\initialstate, \alpha_{i}} + \sum_{i=1}^{n}\sum_{j=1}^{n}\frac{ p^{i+j}}{4C} \pi^{\svisor}_{\initialstate, \beta_{ij}},
	\end{align}
\end{subequations}
\normalsize
where the decision variables are $\pi^{\svisor}_{\initialstate, a}$ for all $a \in \actions(\initialstate)\setminus\lbrace \gamma, \delta \rbrace$, $k_{1}$, and $k_{2}$.

Note that \eqref{uselesscons} is trivially satisfied since $$1 - \sum_{a \in  \actions(\initialstate)\setminus\lbrace \gamma, \delta \rbrace } \pi^{\svisor}_{\initialstate,a} \geq 1 - \sum_{a \in  \actions(\initialstate)\setminus\lbrace \gamma, \delta \rbrace } \frac{1}{n^2 + n + 1} = 1 -  \frac{n^2 + n}{n^2 + n + 1} = \frac{1}{n^2 + n + 1} \geq 0$$ where the first inequality is due to \eqref{limitedalpha} and \eqref{limitedbeta}.

We define $x_{i}= \pi^{\svisor}{\initialstate, \alpha_{i}}(n^{2} + n + 1)$ for all $i \in [n]$ and $y_{ij} = \pi^{\svisor}{\initialstate, \beta_{ij}}(n^{2} + n + 1)$ for all $i,j \in [n]$. Using $x_{i}, y_{ij}$ variables, the optimization problem given in \eqref{opt:scaledversion} can be written as 

\small
\begin{subequations}
	\label{opt:finalverisonsupp}
	\begin{align}
	&\max \quad && \frac{1}{2} \log\left( \frac{1}{(2p^{4n} - p + 2p^{2n} x_{0} + y_{0}) (2p^{4n} - p - 2p^{2n} x_{0} + y_{0})}\right) + \frac{1}{2} \log\left( 4C^2(n^2 + n + 1)^2\right)
	\\
	&\normalfont \text{subject to} && x_{i} \geq 0, \ \forall i \in [n]
	\\
	& && y_{ij} \geq 0, \ \forall i ,j \in [n]
	\\
	& \forall i\in [n], && x_{i} \leq 1
	\\
	&\forall i,j\in [n], && y_{ij} \leq 1
	\\
	&  \substack{\forall i,j \in [n] \\ i\neq j}, && x_{i}  \geq y_{ij}, 
	\\
	&  \substack{\forall i,j \in [n] \\ i\neq j}, && x_{j}  \geq y_{ij} , 
	\\
	&  \substack{\forall i,j \in [n] \\ i\neq j}, &&  y_{ij}  \geq x_{i} + x_{j} - 1, 
	\\
	& \forall i \in [n], &&  y_{ii} = x_{i},
	\\
	&\forall i \in [m], && \sum_{\substack{j=1 \\ M_{ij} = 1}}^{n} x_{j} = 1, \label{consM2supp}
	\\
	& && x_{0} = \sum_{i=1}^{n} p^{i} x_{i}
	\\
	& && y_{0} = \sum_{i=1}^{n} \sum_{j=1}^{n} p^{i+j} y_{ij} 
	\end{align}
\end{subequations}
\normalsize
where the decision variables are $x_{i}$ for all $i \in [n]$ and $y_{ij}$ for all $i,j \in [n]$.

Due to the result given in \cite{matsui1996np}, the optimal value of \eqref{opt:finalverisonsupp} is is greater than or equal to $-\log(4p^{8n})/2 + \log\left( 4C^2(n^2 + n + 1)^2\right)/2 $ if and only if there exists a $0-1$ solution for $x_{1}, \ldots, x_{n}$ satisfying \eqref{consM2supp}. Since the decision problem of \eqref{opt:finalverisonsupp} correspond to solving the set partition problem, \eqref{opt:finalverisonsupp} is NP-hard.  

Since the number of states, actions, and the task constraints is polynomial in $n$ and \eqref{opt:finalverisonsupp} synthesizes an optimal reference policy, the synthesis of optimal reference policies is NP-hard.

\end{document}